\numberwithin{equation}{section}
\newtheorem{theorem}{Theorem}[section]
\newtheorem{corollary}[theorem]{Corollary}
\newtheorem{lemma}[theorem]{Lemma}
\newtheorem{proposition}[theorem]{Proposition}
\begin{document}

\author{B. Narasimha Chary, S. Senthamarai Kannan and A.J. Parameswaran}

\address{Chennai Mathematical Institute, Plot H1, SIPCOT IT Park, 
Siruseri, Kelambakkam,  603103, India.}

\email{chary@cmi.ac.in.}

\address{Chennai Mathematical Institute, Plot H1, SIPCOT IT Park, 
Siruseri, Kelambakkam, 603103, India.}

\email{kannan@cmi.ac.in.}

 \address{Tata Institute of Fundamental Research, Homi Bhabha Road, Colaba, Mumbai,
 500004, India.} 

 \email{param@math.tifr.res.in}

\title{Automorphism group of a Bott-Samelson-Demazure-Hansen Variety}

\begin{abstract}
Let $G$ be a simple, adjoint, algebraic group over the field of complex numbers, 
$B$ be a Borel subgroup of $G$ containing a maximal torus $T$ of $G$, $w$ be an element 
of the Weyl group $W$ and $X(w)$ be the Schubert variety in $G/B$
corresponding to $w$. Let $Z(w,\underline i)$ be the Bott-Samelson-Demazure-Hansen 
variety corresponding to a reduced expression $\underline i$ of $w$. 

In this article, we compute the connected component 
$Aut^0(Z(w, \underline i))$ 
of the automorphism group of  $Z(w,\underline i)$ containing the identity automorphism. We show 
that
$Aut^0(Z(w, \underline i))$ contains a closed subgroup isomorphic to  $B$ if and only if
$w^{-1}(\alpha_0)<0$, where $\alpha_0$ is the highest root.
If $w_0$ denotes the longest element of $W$, then we prove 
that $Aut^0(Z(w_0, \underline i))$ is a parabolic subgroup of $G$. It is also 
shown that this parabolic subgroup depends very much on the chosen reduced 
expression $\underline i$ of $w_0$ and we describe all parabolic subgroups of $G$ that occur as 
$Aut^0(Z(w_0, \underline i))$. 
If $G$ is simply laced, then we show that for every $w\in W$, and for every reduced expression $\underline i$
 of $w$, $ Aut^0(Z(w, \underline i))$
is a quotient of the 
parabolic subgroup $Aut^0(Z(w_0, \underline j))$ of $G$ for a suitable choice of a reduced 
expression $\underline j$ of $w_0$ (see Theorem \ref{cor3}).

\end{abstract}

\maketitle

\noindent
Keywords: Automorphism group, Bott-Samelson-Demazure-Hansen variety, Tangent Bundle.

\section{Introduction}

Let $G$ be a simple algebraic group over  the field $\mathbb{C}$ of 
complex numbers of adjoint type. We fix a maximal torus $T$ of $G$ and 
let $W = N_G(T)/T$ denote the Weyl group of $G$ with respect to $T$. We denote 
  the set of roots of $G$ with respect to $T$ by $R$. Let $B^{+}$ be a  Borel subgroup of $G$ 
  containing $T$. Let $B$ be the Borel subgroup of $G$ opposite to $B^{+}$ determined by $T$. 
  That is, $B=n_{0}B^{+}n_{0}^{-1}$, where $n_{0}$ is a representative in $N_{G}(T)$ of the longest     element $w_{0}$ of $W$. Let  $R^{+}\subset R$ be 
  the set of positive roots of $G$ with respect to the Borel subgroup $B^{+}$. Note that the set of 
  roots of $B$ is equal to the set $R^{-} :=-R^{+}$ of negative roots.
We use the notation $\beta>0$ for $\beta \in R^+$ and $\beta<0$ for 
$\beta \in R^{-}$.
Let $S = \{\alpha_1,\ldots,\alpha_n\}$ 
denote the set of all simple roots in $R^{+}$, where $n$ is the rank of $G$. 
The simple reflection in the Weyl group corresponding to a simple root $\alpha$ is denoted
by $s_{\alpha}$. For simplicity of notation, the simple reflection corresponding to a simple root $\alpha_{i}$ is denoted
by $s_{i}$.  For any simple root $\alpha$, we denote the fundamental weight
corresponding to $\alpha$ by $\omega_{\alpha}$. 
Let $\alpha_{0}$ denote the highest root and $\rho$ denote the half sum of all 
positive roots, which is also same as the sum of all fundamental weights.

For $w \in W$, let $X(w):=\overline{BwB/B}$ denote the Schubert variety in
$G/B$ corresponding to $w$. Given a reduced expression 
$w=s_{{i_1}}s_{{i_2}}\cdots s_{{i_r}}$ of $w$, with 
the corresponding tuple $\underline i:=(i_1,\ldots,i_r)$, we denote by 
$Z(w,{\underline i})$ the desingularization  of the Schubert variety $X(w)$, 
which is now known as the Bott-Samelson-Demazure-Hansen variety. It was 
first introduced by Bott and Samelson in a differential geometric and 
topological context (see \cite{Bott-Sam}). Demazure in  \cite{Dem1} and 
Hansen in \cite{Hansen} independently adapted the construction in 
algebro-geometric situation, which explains the reason for the name. 
For the sake of simplicity, we will denote any Bott-Samelson-Demazure-Hansen 
variety by a BSDH-variety. 

The construction of the BSDH-variety $Z(w,{\underline i})$ depends on the 
choice of the reduced expression $\underline i$ of $w$. So, it is natural to ask that for a 
given $w\in W$ whether the BSDH-varieties corresponding to two different 
reduced expressions of $w$ are isomorphic? This article deals  with the study
of the automorphism group of the BSDH-varieties in order to answer this question.

We recall the  following notation before describing the main results:

Let $\mathfrak{g}$ denote the Lie algebra of $G$, $\mathfrak{h}$ be the 
Lie algebra of $T$, $\mathfrak{b}$ be the Lie algebra of $B$. Let $X(T)$ 
denote the group  of all characters of $T$.

We have $X(T)\otimes \mathbb{R}=Hom_{\mathbb{R}}(\mathfrak{h}_{\mathbb{R}}, \mathbb{R})$, 
the dual of the real form $\mathfrak h_{\mathbb{R}}$ of $\mathfrak{h}$.
The positive definite $W$-invariant bilinear form on 
$Hom_{\mathbb{R}}(\mathfrak{h}_{\mathbb{R}}, \mathbb{R})$ induced by the Killing form of $\mathfrak{g}$
is denoted by $(~,~)$. 
We use the notation $\left< ~,~ \right>$ to
denote $\langle \nu, \alpha \rangle  = \frac{2(\nu,
\alpha)}{(\alpha,\alpha)}$ for $\nu \in X(T)\otimes \mathbb{R}$ and $\alpha \in R$.

Given a reduced expression $w=s_{{i_1}}s_{{i_2}}\cdots s_{{i_r}}$, 
let $\underline i:=(i_1,\ldots,i_r)$.
Set $$J^{'}(w, \underline i):=\{l\in \{1,2, \ldots, r\}:\langle \alpha_{i_l}, \alpha_{i_k}
\rangle = 0~{\rm 
for ~all }~ k < l\}$$ $$J(w, \underline i):=\{\alpha_{i_l}: l\in J^{'}(w, \underline i)\}
\subset S.$$ Note that the simple reflections $\{s_{i_j}: j\in J'(w, \underline i)\}$ commute with each other.
Let $ W_{J(w, \underline i)}$ be the subgroup of $W$ generated by 
$\{s_j\in W\mid \alpha_j\in J(w, \underline i)\}$. Let 
$$P_{J(w, \underline i)}:=BW_{J(w, \underline i)}B$$
be the corresponding standard parabolic subgroup of $G$. By abuse of notation, here 
$W_{J(w, \underline i)}$ in the definition of the parabolic subgroup $P_{J(w, \underline i)}$ 
means any lift of elements of $W_{J(w, \underline i)}$ to $N_G(T)$. Let $N=|R^{+}|$.
Further, let $Aut^0(Z(w, \underline i))$ be the connected component 
of the automorphism group of $Z(w, \underline i)$ containing the identity automorphism.

The main results of this article are (see Theorem \ref{cor3}):

 \begin{enumerate}
 
 \item For any reduced expression $\underline i$ of $w_0$, $ Aut^0(Z(w_0, \underline i)) 
 \simeq P_{J(w_0, \underline i)} $.
  \item For any reduced expression $\underline i$ of $w$, $Aut^0(Z(w, \underline i))$ 
 contains a closed subgroup isomorphic to $ P_{J(w, \underline i)}$  if and only if 
 $w^{-1}(\alpha_0)<0$. In such a case, $P_{J(w, \underline i)}=P_{J(w_0, \underline j)}$ for any reduced expression $w_0=s_{{j_1}}s_{{j_2}}\cdots s_{{j_N}}$ of $w_0$ such that 
$\underline j=(j_1, j_2, \ldots, j_N)$ and $(j_1, j_2, \ldots, j_r)=\underline i$.
 \item If $G$ is simply laced,    
$Aut^0(Z(w, \underline i))$ is a quotient of $Aut^0(Z(w_0, \underline j))$, where $\underline j$ is as in (2).
\item If $G$ is simply laced, $Aut^0(Z(w, \underline i))\simeq P_{J(w, \underline i)}$  if and only if
 $w^{-1}(\alpha_0)<0$. In such a case, we have $P_{J(w, \underline i)}=P_{J(w_0, \underline j)}$ where $\underline j$ is as in (2).
\item The rank of $Aut^0(Z(w, \underline i))$ is at most the rank of $G$.
 \end{enumerate}

Consider the left action of $T$ on $G/B$. Note that $X(w)$ is $T$-stable. Since $T$ is a 
reductive group,
studying the semi-stable points of $X(w)$ for $T$-linearized line bundles is an interesting 
problem related to Geometric Invariant Theory.
By \cite[Lemma 2.1]{KP}, the condition $w^{-1}(\alpha_0)<0$ is equivalent to the Schubert
variety $X(w^{-1})$
having semi-stable points for the choice of the $T$-linearized line bundle 
$\mathcal{L}_{\alpha_0}$ associated to $\alpha_0$.
Corollary \ref{semi} is a formulation of the main results using semi-stable points.

The paper is organized as follows:

In Section $2$, we recall the definition of the
BSDH-varieties and some results on the cohomology of line bundles on Schubert
varieties. The main results used here are the results of Demazure (\cite{Dem1} 
and \cite{Dem}). A structure theorem for indecomposable $B_\alpha$-modules  is 
recalled from \cite{Ka1}, where $B_{\alpha}$ is the intersection of $B$ and the Levi subgroup
of the minimal parabolic subgroup of $G$ containing $B$ corresponding to $\alpha\in S$.
The important results recalled are from 
\cite{Ka4}, which states that all $i^{th}$ cohomology groups of ${\mathcal L}_\beta$ 
vanish on $X(w)$ for all $i\geq 2$ and for all $w\in W$ and for any positive root
$\beta$. In the simply laced case in fact these cohomology groups vanish for all $i>0$.

Section $3$ begins with a detailed description of the BSDH-varieties as iterated
${\mathbb P}^1$-bundles. Using the results of \cite{Ka4}, we conclude that higher 
cohomology groups (that is $i>1$ in general and $i>0$ in the simply laced case) of the tangent 
bundle of the BSDH-variety vanish (see Proposition \ref{prop3.1}). This implies that the BSDH-varieties are rigid 
for simply laced groups and their deformations are  unobstructed in general.

 Next three sections are more technical sections.
 
 Section $4$ is devoted to detailed computations involving the structure of $H^0$ and 
$H^1$ of the relative tangent bundle on $Z(w,\underline i)$, where $w=s_{i_1}\cdots s_{i_r}$ is a reduced
expression for $w$ and $\underline i=(i_1,\ldots, i_r)$.
 We analyze the zero weight spaces and the 
 weight spaces corresponding to  positive roots of the global sections of the relative tangent bundle and we prove 
 that these spaces
 are multiplicity free. 
 We also prove that $\mathfrak b\cap sl_{2, \alpha_{i_r}}$ is a $B_{\alpha_{i_r}}$-submodule of the global
 sections of the relative tangent bundle if and only if $X(s_{i_r})\nsubseteq X(s_{i_1}\cdots s_{i_{r-1}})$.
 While proving this, we observe that its zero weight space is at most one-dimensional (see Lemma \ref{1}). Further, we prove that $sl_{2, \alpha_{i_r}}$
is a $B_{\alpha_{i_r}}$-submodule if and only if $\langle \alpha_{i_r}, \alpha_{i_k} \rangle=0$ for all $1\leq k \leq r-1$
(see Corollary \ref{cor1}).
We conclude the section with a result on the $H^1$ of the relative tangent
 bundle.
 
 In section $5$,
 we discuss the action of the minimal parabolic subgroup $P_{\alpha_{i_1}}$ on  the
BSDH-variety $Z(w, \underline i)$.
We show that the homomorphism $f_{w_0}:\mathfrak p_{\alpha_{i_1}} \longrightarrow H^0(Z(w_0, \underline i), T_{(w_0, \underline i)})$
of Lie algebras induced by the action of $P_{\alpha_{i_1}}$ is injective
(see Lemma \ref{l3}). We also prove that $H^0(Z(w_0, \underline i), T_{(w_0, \underline i)})$ is a Lie subalgebra of $\mathfrak g$
and any Borel (respectively, maximal toral) subalgebra of
$H^0(Z(w_0, \underline i), T_{(w_0, \underline i)})$
is isomorphic to a Borel (respectively,  maximal toral) subalgebra of $\mathfrak g$
(see Corollary \ref{C1}).

In Section $6$, we study the $B$-module  of the global sections of the tangent bundle on the
BSDH-variety $Z(w, \underline i)$.
We prove that the image $f_w(\mathfrak h)$  is a maximal toral subalgebra of $H^0(Z(w, \underline i), T_{(w, \underline i)})$ (see Proposition \ref{Prop0}).
Further, we show that $sl_{\alpha_{i_j}}$ is a $B_{\alpha_{i_j}}$-submodule of $H^0(Z(w, \underline i), T_{(w, \underline i)})$
if and only if  $\langle \alpha_{i_j}, \alpha_{i_k} \rangle=0$ for all $1\leq k \leq j-1$ (see Proposition \ref{prop2}).
We conclude Section $6$ by proving that $H^0(Z(w, \underline i), T_{(w, \underline i)})$ contains a Lie subalgebra $\mathfrak b'$  isomorphic to
$\mathfrak b$ if and only if $w^{-1}(\alpha_{0})<0$ (see Proposition \ref{Borel1}). 

In Section $7$, we prove  the main results on the connected component $Aut^0(Z(w, \underline i))$ 
of the 
automorphism group of the BSDH-variety $Z(w, \underline i)$
 using the fact that the global sections $H^0(Z(w, \underline i), T_{(w, \underline i)})$ of the tangent bundle on $Z(w, \underline i)$ 
 is the Lie 
algebra of $Aut^0(Z(w, \underline i))$ . 
More precisely, we prove that the Lie algebra $\mathfrak p_{J(w_0, \underline i)}$ of $P_{J(w_0, \underline i)}$
is isomorphic to $H^0(Z(w_0, \underline i), T_{(w_0, \underline i)})$.
We also prove that for any 
reduced expression $\underline j=(j_1, j_2, \ldots, j_N)$ of $w_0$ such that $(j_1, j_2, \ldots, j_r)=\underline i$ the homomorphism $H^0(Z(w_0, \underline j), T_{(w_0, \underline j)}) \longrightarrow
H^0(Z(w, \underline i), T_{(w, \underline i)})$ of Lie algebras induced by the fibration 
$Z(w_0, \underline j)\longrightarrow Z(w, \underline i)$ is injective if and only if 
$w^{-1}(\alpha_0)<0$.
Further, we prove that if $G$ is simply laced, the homomorphism $H^0(Z(w_0, \underline j), T_{(w_0, \underline j)}) \longrightarrow
H^0(Z(w, \underline i), T_{(w, \underline i)})$ as above is surjective (see Theorem \ref{theorem1}).
We also compute the kernel of this homomorphism (see Corollary \ref{kernel}).
Using Theorem \ref{theorem1}, we prove the main results of this article.  Using Corollary \ref{kernel},
we describe the kernel of the natural homomorphism $Aut^0(Z(w_0, \underline j)) \longrightarrow Aut^0(Z(w, \underline i))$ 
of algebraic groups (see Corollary \ref{kernel1}).
Thus, we have a complete description of $Aut^0(Z(w, \underline i))$ for any reduced expression $\underline i$ of $w$
in the simply laced case.

\section{Preliminaries}\label{prelim}
Let $\{x_{\beta}: \beta \in R\}\cup \{h_{\alpha}: \alpha \in S \} $ be the 
Chevalley basis  for $\mathfrak{g}$ corresponding to the root system $R$.   
 For a simple root $\alpha$, we denote by 
$\mathfrak{g}_{\alpha}$(respectively, $\mathfrak{g}_{-\alpha}$) the one-dimensional root subspace of $\mathfrak{g}$ spanned by $x_{\alpha}$ 
(respectively, $x_{-\alpha}$). Let $sl_{2,\alpha}$ denote the $3$-dimensional 
Lie subalgebra of $\mathfrak{g}$ generated by $x_{\alpha}$ and $x_{-\alpha}$.

Let $\leq$ denote the partial order on $X(T)$ given by $\mu\leq \lambda$
if $\lambda-\mu$ is a non-negative integral linear combination of simple 
roots. We say that $\mu < \lambda$ if in addition $\lambda-\mu$ is non zero.
We set $R^{+}(w):=\{\beta\in R^{+}:w(\beta)\in R^{-}\}$.
 We refer to \cite{Hum1} and 
\cite{Hum2} for preliminaries on Lie algebras and algebraic groups.

For a simple root $\alpha\in S$, we denote by
$P_{\alpha}$ the minimal parabolic subgroup of $G$ generated by $B$ and $n_{\alpha}$, a 
lift of $s_{\alpha}$ in $N_G(T)$. 

We recall that the BSDH-variety corresponding to a reduced expression $\underline i$ of
$w=s_{{i_1}}s_{{i_2}}\cdots s_{{i_r}}$  is defined by
\[
Z(w, \underline i) = \frac {P_{\alpha_{i_{1}}} \times P_{\alpha_{i_{2}}} \times \cdots \times 
P_{\alpha_{i_{r}}}}{B \times \cdots
\times B},
\]

where the action of $B \times \cdots \times B$ on $P_{\alpha_{i_{1}}} \times P_{\alpha_{i_{2}}}
\times \cdots \times P_{\alpha_{i_{r}}}$ is given by $(p_1, \ldots , p_r)(b_1, \ldots
, b_r) = (p_1 \cdot b_1, b_1^{-1} \cdot p_2 \cdot b_2, \ldots
,b^{-1}_{r-1} \cdot p_r \cdot b_r)$, $ p_j \in P_{\alpha_{i_{j}}}$, $b_j \in B$ and 
$\underline i=(i_1, i_2, \ldots, i_r)$
(see \cite[p.73, Definition 1]{Dem1}, \cite[p.64, Definition 2.2.1]{Bri} and \cite{Hansen}).

We note that for each reduced expression $\underline i$ of $w$, $Z(w, \underline i)$ is a 
smooth projective 
variety. We denote both the natural birational surjective morphism
from $ Z(w, \underline i)$ to $X(w)$ and the composition map $Z(w, \underline i)
\longrightarrow X(w)\hookrightarrow G/B$ by $\phi_w$.

Let $f_r : Z(w, \underline i) \longrightarrow Z(ws_{i_r},
\underline i')$ denote the map induced by the
projection $$P_{\alpha_{i_1}} \times P_{\alpha_{i_2}} \times \cdots \times 
P_{\alpha_{i_r}} \longrightarrow P_{\alpha_{i_1}} \times P_{\alpha_{i_2}}
\times \cdots \times P_{\alpha_{i_{r-1}}},$$ where $i'=(i_1,i_2,\ldots, i_{r-1})$.
 We note that $f_r$ is a 
$P_{\alpha_{i_r}}/B \simeq {\mathbb P}^{1}$-fibration.

Now, we recall some preliminaries on the BSDH-varieties and some application of Leray 
spectral sequences to compute the cohomology of line bundles on Schubert varieties. 
Good references for this are \cite{Bri} and  \cite{Jan}.

  Let $L_{\alpha}$ denote the Levi subgroup of
$P_{\alpha}$ containing $T$ for $\alpha \in S$. We denote by $B_{\alpha}$ the
intersection of $L_{\alpha}$ and $B$. Then $L_{\alpha}$ is the product
of $T$ and a homomorphic image $G_{\alpha}$ of $SL(2, \mathbb C)$ via a homomorphism
$\psi: SL(2, \mathbb C)\longrightarrow L_{\alpha}$ 
(see  \cite[II, 1.3]{Jan}). 

Let $B^{\prime}_{\alpha}:=B_{\alpha}\cap G_{\alpha}\subset L_{\alpha}$. We note that 
the morphism $G_{\alpha}/B_{\alpha}^{\prime} \longrightarrow L_{\alpha}/B_{\alpha}$ 
induced by the inclusion is an isomorphism. Since $L_{\alpha}/B_{\alpha}\hookrightarrow 
P_{\alpha}/B$  is an isomorphism, to compute the cohomology groups $H^{i}(P_{\alpha}/B, V)$
for any $B$-module $V$, we treat $V$ as a $B_{\alpha}$-module  and we compute 
$H^{i}(L_{\alpha}/B_{\alpha}, V)$.

For a $B$-module $V$, let ${\mathcal L}(w,V)$ denote the restriction of the
associated  homogeneous  vector bundle on $G/B$ to $X(w)$.
By abuse of notation we denote the pull back of ${\mathcal L}(w,V)$ via $\phi_w$ to 
$Z(w, \underline i)$ also by ${\mathcal L}(w,V)$, when there is no cause 
for confusion. Then, for $j\geq 0$, we have the following isomorphism of
$B$-linearized sheaves (see \cite[II, p.366]{Jan}):

\[
R^{j}{f_{r}}_{*}{\mathcal L}(w,V) = {\mathcal L}({ws_{{i_r}}}, 
H^{j}(P_{\alpha_{i_r}}/B, {\mathcal L}(s_{\alpha_{i_r}},V))).
 \hspace{1.5cm}(Iso) \]

We use the following {\em ascending 1-step construction} as a basic tool 
in computing cohomology modules.

For $w \in W,$ let $l(w)$ denote the length of $w$. Let $\gamma$ be a 
simple root such that $l(w) = l(s_{\gamma}w) +1$. Let $Z(w, \underline i)$
be a BSDH-variety corresponding to a reduced
expression $w=s_{{i_{1}}}s_{{i_{2}}}\cdots s_{{i_{r}}}$,
where $\alpha_{i_{1}}=\gamma$. Then, we have an induced morphism

\[
g: Z(w, \underline i) \longrightarrow P_{\gamma}/B \simeq {\mathbb P}^1,
\]
with fibres $Z(s_{\gamma}w, \underline i')$, where $\underline i'=(i_2,i_3, \ldots, i_r)$.

By an application of the Leray spectral sequence together with the fact
that the base is ${\mathbb P}^1$, we obtain for every $B$-module $V$, the 
following exact sequence of $P_{\gamma}$-modules:
$$
0 \to H^{1}(P_{\gamma}/B, R^{j-1}{g}_{*}{\mathcal L}(w,V)) 
\to 
H^{j}(Z(w, \underline i) , {\mathcal L}(w,V)) \to
H^{0}(P_{\gamma}/B , R^{j}{g}_{*}{\mathcal L}(w,V) ) \to 0.
$$

Since for any $B$-module $V$, the vector bundle ${\mathcal L}(w,V)$
on $Z(w, \underline i)$ is the pull back of the homogeneous vector 
bundle from $X(w)$, we conclude that the cohomology modules
$$H^{j}(Z(w, \underline i) ,~{\mathcal L}(w,V))\cong H^{j}(X(w),
~{\mathcal L}(w,V))$$ (see  \cite[Theorem 3.3.4 (b)]{Bri}), and are independent
 of the choice of the reduced expression $\underline i$. Hence we denote
$H^{j}(Z(w, \underline i) ,~{\mathcal L}(w,V))$ by $H^j(w,V)$. 
For a character $\lambda$ of $B$, we denote the one-dimensional $B$-module corresponding to $\lambda$
by $\mathbb C_{\lambda}$. Further, we denote the 
cohomology modules $H^{j}(Z(w, \underline i) ,~{\mathcal L}(w, \mathbb C_{\lambda} ))$ by $H^j(w, \lambda)$. 

Rewriting the above short exact sequence using these simple notation, we have the following short 
exact sequence:
$$
0 \to H^{1}(s_{\gamma}, H^{j-1}(s_{\gamma}w, V)) 
\to 
H^{j}(w, V) \to
H^{0}(s_{\gamma} , H^{j}(s_{\gamma}w, V) ) \to 0.
$$

In this paper, the $B$-modules $V$ we deal with satisfy $R^{j}{g}_{*}{\mathcal L}(w,V)=0$
for all $j\geq 2$.
Moreover, we use only the following two special cases of the above short exact sequence, 
which we denote 
by {\it SES}. 
\begin{enumerate}
 \item $H^0(w, V)\simeq H^0(s_{\gamma}, H^0(s_{\gamma}w, V))$ for $j=0$.
 \item  $ 0 \to H^{1}(s_{\gamma}, H^{0}(s_{\gamma}w, V))
\to H^{1}(w, V) \to H^{0}(s_{\gamma} , H^{1}(s_{\gamma}w, V) ) \to 0$ for $j=1$.
\end{enumerate}

  Now, we recall the following 
result
 due to Demazure (\cite{Dem}, Page 1) on a short exact sequence of $B$-modules:

\begin{lemma}\label{dem1} Let $\alpha$ be a simple root and $\lambda \in X(T)$ be such that 
$\langle \lambda , \alpha \rangle  \geq 0$.
Let $ev$ denote the evaluation
map $H^0({s_\alpha, \lambda}) 
\longrightarrow \mathbb{C}_\lambda$. Then we have

\begin{enumerate}

\item If $\langle \lambda,\alpha \rangle=0$, then $H^0({s_\alpha, \lambda})\simeq\mathbb{C}_\lambda$.
\item If  $\langle \lambda,\alpha\rangle \geq 1$, then $\mathbb C_{s_{\alpha}(\lambda)}\hookrightarrow H^0({s_\alpha, \lambda})$
and there is a short exact sequence of $B$-modules:
$$ 0\longrightarrow  H^0({s_\alpha,\lambda-\alpha})\longrightarrow H^0({s_\alpha, \lambda})/ \mathbb C_{s_{\alpha}(\lambda)}\overset{ev}{\longrightarrow} \mathbb C_{\lambda}\longrightarrow 0.$$
Further more, $H^0({s_\alpha,\lambda-\alpha})=0$ when $\langle\lambda,\alpha \rangle=1$.
\item Let $n=\langle \lambda, \alpha \rangle$. 
As a $B$-module, $H^0(s_{\alpha}, \lambda)$ has a composition series
$$0\subsetneq V_n\subsetneq V_{n-1}\subsetneq \ldots \subsetneq V_0=H^0(s_{\alpha}, \lambda)$$ 
such that  $V_i/V_{i+1}\simeq \mathbb C_{\lambda -i\alpha}$
for $i=0,1, \cdots, n-1$ and  $V_n=\mathbb C_{s_{\alpha}(\lambda)}$.

\end{enumerate}

\end{lemma}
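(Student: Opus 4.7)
The plan is to identify $H^0(s_\alpha, \lambda) = H^0(P_\alpha/B, \mathcal{L}_\lambda)$ via the chain of isomorphisms $P_\alpha/B \simeq L_\alpha/B_\alpha \simeq G_\alpha/B'_\alpha \simeq \mathbb{P}^1$ recalled earlier and to read off the statement from Borel--Weil for $SL_2$. Under this identification $H^0(s_\alpha, \lambda)$ becomes the $(n+1)$-dimensional irreducible representation of $G_\alpha$ where $n := \langle \lambda, \alpha \rangle$. For $n = 0$ this is the one-dimensional $G_\alpha$-module of $T$-weight $\lambda$, giving (1) immediately.

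For (3), $\mathfrak{sl}_2$-theory gives the $T$-weights of $H^0(s_\alpha, \lambda)$ as $\lambda, \lambda - \alpha, \ldots, \lambda - n\alpha = s_\alpha(\lambda)$, each of multiplicity one. Because $B$ is the Borel opposite to the conventional positive one, $B_\alpha$ has unipotent radical $U_{-\alpha}$, which strictly lowers $T$-weights by multiples of $\alpha$ and whose generator is nonzero on every weight vector of weight strictly greater than the minimum. Consequently the only $B_\alpha$-stable subspaces are the spans $V_i$ of the weight vectors of weights $\lambda - n\alpha, \lambda - (n-1)\alpha, \ldots, \lambda - i\alpha$, and these produce the composition series with successive quotients $\mathbb{C}_{\lambda - i\alpha}$ in the stated order.

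For (2), the bottom of the filtration gives $\mathbb{C}_{s_\alpha(\lambda)} = V_n \hookrightarrow H^0(s_\alpha, \lambda)$. The evaluation map $ev$ at the base point $eB$ is $T$-equivariant and hence nonzero only on the weight-$\lambda$ line; since $\lambda \neq s_\alpha(\lambda)$ for $n \geq 1$, the kernel of $ev$ contains $V_n$, and the induced map on $H^0(s_\alpha, \lambda)/\mathbb{C}_{s_\alpha(\lambda)}$ still surjects onto $\mathbb{C}_\lambda$ with kernel $V_1/V_n$. I would identify this quotient with $H^0(s_\alpha, \lambda - \alpha)$ by observing that both are uniserial $B_\alpha$-modules with the same ordered list of $T$-weights $\lambda - \alpha, \ldots, \lambda - (n-1)\alpha$, on which $U_{-\alpha}$ shifts consecutive weight vectors into one another; such a $B_\alpha$-module is unique up to isomorphism, so the two agree. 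When $n = 1$ the list of weights is empty and $\langle \lambda - \alpha, \alpha \rangle = -1$ forces $H^0(s_\alpha, \lambda - \alpha) = 0$, matching the last clause of (2).

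The only subtle point is aligning the $SL_2$-convention with the opposite-Borel convention of the paper. Concretely, $B_\alpha$ corresponds to a ``lower triangular'' Borel of $G_\alpha$, so $U_{-\alpha}$ \emph{lowers} $T$-weights and the $B_\alpha$-socle is the \emph{lowest} weight line $\mathbb{C}_{s_\alpha(\lambda)}$ rather than the highest weight line $\mathbb{C}_\lambda$; once this orientation is fixed, the rest is standard $\mathfrak{sl}_2$-bookkeeping.
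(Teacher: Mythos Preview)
The paper does not give its own proof of this lemma; it is stated as a result recalled from Demazure \cite{Dem}, with no argument supplied. Your proposal is a correct self-contained verification via Borel--Weil for $SL_2$ and the explicit weight structure on $H^0(\mathbb{P}^1,\mathcal{O}(n))$, and your handling of the opposite-Borel convention is accurate.

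One point deserves to be made explicit. In part~(2) you identify $V_1/V_n$ with $H^0(s_\alpha,\lambda-\alpha)$ by arguing that both are uniserial $B_\alpha$-modules with the same weight string, but the lemma asserts a short exact sequence of $B$-modules. This gap closes once you observe that the unipotent radical $U_{P_\alpha}$ of $P_\alpha$ satisfies $B = B_\alpha \ltimes U_{P_\alpha}$ and acts trivially on $H^0(P_\alpha/B,\mathcal{L}_\mu)$ for every character $\mu$: indeed $U_{P_\alpha}$ is normal in $P_\alpha$, so for $u\in U_{P_\alpha}$ and $l\in L_\alpha$ one has $u\cdot lB = l(l^{-1}ul)B = lB$, and on sections the same computation gives $(u\cdot s)(l)=\mu(l^{-1}u^{-1}l)^{-1}s(l)=s(l)$ since characters are trivial on unipotents. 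Hence the $B$-module structure on each $H^0(s_\alpha,\mu)$ is the inflation of its $B_\alpha$-module structure along $B\twoheadrightarrow B_\alpha$, and your $B_\alpha$-isomorphism upgrades automatically.
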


 We define the {\rm dot} action by $w\cdot\lambda=w(\lambda+\rho)-\rho$ for any $w \in W$ and $\lambda \in X(T)\otimes \mathbb R$.
 Note that $s_{\alpha}\cdot 0=-\alpha$ for $\alpha \in S$.
 As a consequence of the exact sequences of Lemma \ref{dem1}, we can prove 
the following.

Let $w\in W$, $\alpha$ be a simple root, and set $v=ws_{\alpha}$.
\begin{lemma} \label{lemma2.2}
If $l(w) = l(v)+1$, then, we have
\begin{enumerate}
\item  If $\langle \lambda , \alpha \rangle \geq 0$, then 
$H^{j}(w , \lambda) = H^{j}(v, H^0({s_\alpha, \lambda}) )$ for all $j\geq 0$.
\item  If $\langle \lambda ,\alpha \rangle \geq 0$, then $H^{j}(w , \lambda ) =
H^{j+1}(w , s_{\alpha}\cdot \lambda)$ for all $j\geq 0$.
\item If $\langle \lambda , \alpha \rangle  \leq -2$, then $H^{j+1}(w , \lambda ) =
H^{j}(w ,s_{\alpha}\cdot \lambda)$ for all $j\geq 0$. 
\item If $\langle \lambda , \alpha \rangle  = -1$, then $H^{j}( w ,\lambda)$ 
vanishes for every $j\geq 0$.
\end{enumerate}
\end{lemma}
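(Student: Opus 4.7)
The plan is to pick a reduced expression $\underline i = (i_1, \ldots, i_{r-1}, i_r)$ for $w$ with $\alpha_{i_r} = \alpha$ (possible since $l(w) = l(v) + 1$ with $v = ws_\alpha$), so that $\underline i' = (i_1, \ldots, i_{r-1})$ is a reduced expression for $v$ and $f_r : Z(w, \underline i) \to Z(v, \underline i')$ is a $\mathbb{P}^1$-fibration. Since $H^j(w, \cdot)$ is independent of the reduced expression, this choice is harmless. Applying (Iso) with $V = \mathbb{C}_\lambda$ gives $R^q (f_r)_* \mathcal{L}(w, \mathbb{C}_\lambda) = \mathcal{L}(v, H^q(s_\alpha, \lambda))$, which vanishes for $q \geq 2$, so the Leray spectral sequence takes the form $E_2^{p,q} = H^p(v, H^q(s_\alpha, \lambda)) \Rightarrow H^{p+q}(w, \lambda)$, with possibly non-zero rows only at $q = 0, 1$.

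Next, I would record the behavior of $H^\bullet(s_\alpha, \lambda)$ in the three sign regimes of $\langle \lambda, \alpha \rangle$, using Lemma \ref{dem1} together with the $\mathbb{P}^1 = P_\alpha/B$ case of Borel-Weil-Bott. If $\langle \lambda, \alpha \rangle \geq 0$, then $H^0(s_\alpha, \lambda)$ is the Demazure $B$-module and $H^1(s_\alpha, \lambda) = 0$. If $\langle \lambda, \alpha \rangle = -1$, both $H^0$ and $H^1$ vanish. If $\langle \lambda, \alpha \rangle \leq -2$, then $H^0(s_\alpha, \lambda) = 0$ and there is a $P_\alpha$-equivariant, hence $B$-equivariant, isomorphism $H^1(s_\alpha, \lambda) \cong H^0(s_\alpha, s_\alpha \cdot \lambda)$, with $s_\alpha \cdot \lambda$ now $\alpha$-dominant. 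Thus in every case of the lemma at most one row of the Leray spectral sequence is non-zero, so it degenerates.

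From this degeneration I would read off the four statements. Case (1) and case (4) are immediate, from the $q = 0$ row and the total vanishing, respectively. For case (3), only the $q = 1$ row survives, giving $H^{j+1}(w, \lambda) = H^j(v, H^0(s_\alpha, s_\alpha \cdot \lambda)) = H^j(w, s_\alpha \cdot \lambda)$, where the second equality is case (1) applied to the $\alpha$-dominant weight $s_\alpha \cdot \lambda$. Case (2) then follows from case (3) with $\lambda$ replaced by $s_\alpha \cdot \lambda$, using $s_\alpha \cdot (s_\alpha \cdot \lambda) = \lambda$. The one place requiring external input is the $B$-equivariance of the isomorphism $H^1(s_\alpha, \lambda) \cong H^0(s_\alpha, s_\alpha \cdot \lambda)$ for $\langle \lambda, \alpha \rangle \leq -2$; this is the standard $\mathbb{P}^1$ Borel-Weil-Bott computation for $P_\alpha$-equivariant line bundles on $P_\alpha/B$, and is the main potential obstacle beyond routine bookkeeping.
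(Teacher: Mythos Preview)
Your proposal is correct and follows essentially the same route as the paper: choose a reduced expression for $w$ ending in $s_\alpha$, use the isomorphism (Iso) to identify the higher direct images of $\mathcal{L}(w,\lambda)$ along $f_r$ with $\mathcal{L}(v,H^q(s_\alpha,\lambda))$, invoke Borel--Weil--Bott on $P_\alpha/B\simeq\mathbb{P}^1$ to see that at most one $q$ survives in each sign regime, and then read everything off from the degenerate Leray spectral sequence, deducing (3) via (1) and (2) from (3) by swapping $\lambda$ with $s_\alpha\cdot\lambda$. The paper's proof is organized identically, citing \cite{Hart} and \cite{Weibel} for the degeneration and \cite{Dem} and \cite{Jan} for the $\mathbb{P}^1$ cohomology.
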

\begin{proof}
 Choose a reduced expression of $w=s_{{i_1}}s_{{i_2}}\cdots s_{{i_r}}$
with $\alpha_{i_r}=\alpha$. Hence $v=s_{{i_1}}s_{{i_2}}\cdots 
s_{{i_{r-1}}}$ is a reduced 
expression for $v$.
Let $\underline i=(i_1, i_2, \ldots, i_r)$ and $\underline i'=(i_1, i_2, \ldots, i_{r-1})$.
Now consider the morphism $f_r : Z(w, \underline i) \longrightarrow Z(v,\underline i')$ 
defined as above.

Proof of (1): Since $\langle \lambda , \alpha \rangle  \geq 0$, we have 
$H^j(s_{\alpha}, \lambda)=0$ for every $j>0$.
Hence using the isomorphism ($Iso$), we have $R^{j}{f_{r}}_{*}{\mathcal L}(w, \lambda)=0$ for 
every $j>0$.
Therefore, 
by \cite[p.252, III, Ex $8.1$]{Hart} we have $H^i(w, \lambda)= H^i(v, H^0(s_{\alpha}, \lambda))$
for every $i\geq 0$.

Proof of (3): Since $\langle \lambda , \alpha \rangle  \leq -2$, by using (Borel-Weil-Bott
theorem) \cite[Theorem 2 (c)]{Dem} for $L_{\alpha}/B_{\alpha}(\simeq P_{\alpha}/B)$; we have 
$H^i(s_{\alpha}, \lambda)=0$ for $i\neq 1$ and $H^1(s_{\alpha}, \lambda)= H^0(s_{\alpha},
s_{\alpha}\cdot \lambda)$.
By ($Iso$), we have $R^{j}{f_{r}}_{*}{\mathcal L}(w, \lambda)=0$ for every $j\neq 1$. Hence by 
using Leray spectral sequence, we see that $H^{j+1}(w, \lambda)=
H^j(v, R^1{f_{r}}_{*}{\mathcal L}(w, \lambda))
=H^j(v, H^1(s_{\alpha}, \lambda))$ (see \cite[p.152, Section 5.8.6]{Weibel}). Hence 
$H^{j+1}(w, \lambda)=H^j(v, H^0(s_{\alpha}, s_{\alpha}\cdot \lambda))$ for every $j\geq 0$.
Since $\langle s_{\alpha}\cdot \lambda,  \alpha \rangle \geq 0$, by (1) we have 
$H^j(v, H^0(s_{\alpha}, s_{\alpha}\cdot \lambda))=H^j(w, s_{\alpha}\cdot \lambda)$ 
for every $j\geq 0$.
Hence we have $H^{j+1}(w, \lambda)= H^j(w, s_{\alpha}\cdot \lambda)$ for every $j\geq 0$. 

  Proof of (2): It follows from (3) by interchanging the role of $\lambda$ and 
  $s_{\alpha}\cdot \lambda$, because $\langle s_{\alpha} \cdot \lambda, \alpha \rangle = 
  -\langle \lambda, \alpha \rangle -2$.
  
 Proof of (4):  If $\langle \lambda , \alpha \rangle =-1$,  then $H^i(s_{\alpha}, \lambda)=0$ 
 for every $i\geq 0$ (see \cite[p.218, Proposition 5.2(b)]{Jan}).
  Now the proof of (4) follows by using similar arguments as in (1) and (3).  
\end{proof}

The following consequence of Lemma \ref{lemma2.2} will be used 
to compute cohomology modules in this paper.

Let $\pi:\widetilde{G}\longrightarrow G$ be the simply connected covering of $G$.
Let $\widetilde{L}_{\alpha}$  ( respectively,  $\widetilde{B}_{\alpha}$  
be the inverse image of $L_{\alpha}$ ( respectively, $B_{\alpha}$ ) in $\widetilde{G}$
under $\pi$. 

\begin{lemma}\label{lemma2.3}
Let $V$ be an irreducible  $\widetilde{L}_{\alpha}$-module. Let $\lambda$
be a character of $\widetilde{B}_{\alpha}$. Then, we have 
\begin{enumerate}
\item As $\widetilde{L}_{\alpha}$-modules, 
$H^j(\widetilde{L}_{\alpha}/\widetilde{B}_{\alpha}, V \otimes \mathbb C_{\lambda})\simeq V \otimes
   H^j(\widetilde{L}_{\alpha}/\widetilde{B}_{\alpha}, \mathbb C_{\lambda})$ for every $j\geq 0$.
\item If
$\langle \lambda , \alpha \rangle \geq 0$,  
$H^{j}(\widetilde{L}_{\alpha}/\widetilde{B}_{\alpha} , V\otimes \mathbb{C}_{\lambda}) =0$ 
for every $j\geq 1$.
\item If
$\langle \lambda , \alpha \rangle  \leq -2$, 
$H^{0}(\widetilde{L}_{\alpha}/\widetilde{B}_{\alpha} , V\otimes \mathbb{C}_{\lambda})=0$, 
and $$H^{1}(\widetilde{L}_{\alpha}/\widetilde{B}_{\alpha} , V\otimes \mathbb{C}_{\lambda})\simeq V \otimes H^{0}(\widetilde{L}_{\alpha}/\widetilde{B}_{\alpha} , 
\mathbb{C}_{s_{\alpha}\cdot\lambda}).$$ 
\item If $\langle \lambda , \alpha \rangle  = -1$, then 
$H^{j}( \widetilde{L}_{\alpha}/\widetilde{B}_{\alpha} , V\otimes \mathbb{C}_{\lambda}) =0$ 
for every $j\geq 0$.
\end{enumerate}
\end{lemma}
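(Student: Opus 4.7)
The plan is to reduce the whole lemma to part (1), which serves as the main structural result; parts (2)--(4) then follow by invoking Lemma \ref{dem1} (or equivalently the classical Borel--Weil--Bott theorem) for $\widetilde{L}_\alpha/\widetilde{B}_\alpha \simeq \mathbb{P}^1$. The crucial observation is that although $\mathbb{C}_\lambda$ is only a $\widetilde{B}_\alpha$-module, the module $V$ extends by hypothesis to the full group $\widetilde{L}_\alpha$, and so the associated homogeneous vector bundle $\mathcal{L}(V)$ on $\widetilde{L}_\alpha/\widetilde{B}_\alpha$ is equivariantly trivial. This reduces cohomology computations with $V$-coefficients to cohomology computations with $\mathbb{C}_\lambda$-coefficients, with $V$ simply tagging along as a tensor factor.

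To make (1) precise, I would write down the $\widetilde{L}_\alpha$-equivariant morphism
$$
\Phi\colon(\widetilde{L}_\alpha\times V)/\widetilde{B}_\alpha \longrightarrow \widetilde{L}_\alpha/\widetilde{B}_\alpha \times V,\qquad [(g,v)]\longmapsto (g\widetilde{B}_\alpha,\, g\cdot v),
$$
which is well defined precisely because the $\widetilde{B}_\alpha$-action on $V$ is the restriction of an $\widetilde{L}_\alpha$-action, and check that it is an isomorphism of $\widetilde{L}_\alpha$-equivariant bundles. Consequently, as $\widetilde{L}_\alpha$-linearized sheaves, $\mathcal{L}(V\otimes \mathbb{C}_\lambda)\simeq \mathcal{L}(V)\otimes \mathcal{L}(\mathbb{C}_\lambda)\simeq V\otimes_{\mathbb{C}}\mathcal{L}(\mathbb{C}_\lambda)$. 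Since $V$ is a finite-dimensional vector space, cohomology commutes with the tensor product by $V$, giving the isomorphism
$$
H^j(\widetilde{L}_\alpha/\widetilde{B}_\alpha,\, V\otimes \mathbb{C}_\lambda)\;\simeq\; V\otimes H^j(\widetilde{L}_\alpha/\widetilde{B}_\alpha,\, \mathbb{C}_\lambda)
$$
of $\widetilde{L}_\alpha$-modules, which is part (1).

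Parts (2)--(4) are then immediate. For (2), when $\langle \lambda,\alpha\rangle\geq 0$, Lemma \ref{dem1} together with Kempf vanishing on $\mathbb{P}^1\simeq \widetilde{L}_\alpha/\widetilde{B}_\alpha$ gives $H^j(\widetilde{L}_\alpha/\widetilde{B}_\alpha,\mathbb{C}_\lambda)=0$ for $j\geq 1$, and tensoring with $V$ yields the claim. For (3), when $\langle \lambda,\alpha\rangle \leq -2$, we have $\langle s_\alpha\cdot\lambda,\alpha\rangle\geq 0$; Borel--Weil--Bott for $\mathbb{P}^1$ (or Serre duality) gives $H^0(\widetilde{L}_\alpha/\widetilde{B}_\alpha,\mathbb{C}_\lambda)=0$ and $H^1(\widetilde{L}_\alpha/\widetilde{B}_\alpha,\mathbb{C}_\lambda)\simeq H^0(\widetilde{L}_\alpha/\widetilde{B}_\alpha,\mathbb{C}_{s_\alpha\cdot\lambda})$, from which (3) follows after tensoring with $V$. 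For (4), when $\langle \lambda,\alpha\rangle=-1$, the line bundle $\mathcal{L}(\mathbb{C}_\lambda)$ is (up to equivariant twist) $\mathcal{O}_{\mathbb{P}^1}(-1)$ and has no cohomology at all; applying (1) finishes the proof. The only mildly subtle step is verifying that $\Phi$ above is a well-defined $\widetilde{L}_\alpha$-equivariant isomorphism of sheaves; everything else is a direct application of the $\mathbb{P}^1$-case of Borel--Weil--Bott, which is already encoded in Lemma \ref{dem1}.
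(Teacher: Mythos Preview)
Your proof is correct and follows essentially the same approach as the paper. For (1) the paper simply cites the tensor identity from Jantzen (\cite[I, Proposition~4.8 and I, Proposition~5.12]{Jan}), whereas you unpack it by writing down the explicit $\widetilde{L}_\alpha$-equivariant trivialization $\Phi$; for (2)--(4) the paper invokes Lemma~\ref{lemma2.2} with $w=s_\alpha$ (together with $\widetilde{L}_\alpha/\widetilde{B}_\alpha\simeq P_\alpha/B$), which is exactly the Borel--Weil--Bott statement on $\mathbb{P}^1$ you use.
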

\begin{proof} Proof (1).
    By \cite[p.53, I, Proposition 4.8]{Jan} and \cite[p.77, I, Proposition 5.12]{Jan}, 
   for all $j\geq 0$, we have the following isomorphism of  $\widetilde{L}_{\alpha}$-modules:
   $$H^j(\widetilde{L}_{\alpha}/\widetilde{B}_{\alpha}, V \otimes \mathbb C_{\lambda})\simeq V \otimes
   H^j(\widetilde{L}_{\alpha}/\widetilde{B}_{\alpha}, \mathbb C_{\lambda}).$$ 
   
 Proof of (2), (3) and (4) follows from Lemma \ref{lemma2.2}  by taking $w=s_{\alpha}$ and 
 the fact that $\widetilde{L}_{\alpha}/\widetilde{B}_{\alpha} \simeq P_{\alpha}/B$.
 \end{proof}

Recall the structure of indecomposable 
$B_{\alpha}$-modules (see \cite[ p.130, Corollary 9.1]{Ka1}).

\begin{lemma}\label{lemma2.4}{\ }
\begin{enumerate}
\item
Any finite dimensional indecomposable $\widetilde{B}_{\alpha}$-module $V$ is isomorphic to 
$V^{\prime}\otimes \mathbb{C}_{\lambda}$ for some irreducible representation
$V^{\prime}$ of $\widetilde{L}_{\alpha}$ and for some character $\lambda$ of $\widetilde{B}_{\alpha}$.
\item
Any finite dimensional indecomposable $B_{\alpha}$-module $V$ is isomorphic to 
$V^{\prime}\otimes \mathbb{C}_{\lambda}$ for some irreducible representation
$V^{\prime}$ of $\widetilde{L}_{\alpha}$ and for some character $\lambda$ of $\widetilde{B}_{\alpha}$.
\end{enumerate}
\end{lemma}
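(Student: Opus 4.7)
The plan is to establish (1) first and deduce (2) from it by pullback. The crucial structural input is that $\widetilde{L}_\alpha$, being a connected reductive group of semisimple rank one, decomposes (up to isogeny) as an almost direct product $Z \cdot \widetilde{G}_\alpha$ of its connected center $Z$ with its derived subgroup $\widetilde{G}_\alpha \cong SL(2,\mathbb{C})$; correspondingly $\widetilde{B}_\alpha = Z \cdot \widetilde{B}'_\alpha$ where $\widetilde{B}'_\alpha := \widetilde{B}_\alpha \cap \widetilde{G}_\alpha$ is a Borel subgroup of $\widetilde{G}_\alpha$. Since $Z$ is central in $\widetilde{B}_\alpha$, any finite-dimensional $\widetilde{B}_\alpha$-module $V$ decomposes into $Z$-isotypic components, each of which is $\widetilde{B}_\alpha$-stable; indecomposability forces $Z$ to act on $V$ by a single character $\mu_0$. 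Choosing an extension of $\mu_0$ to a character $\lambda_0$ of $\widetilde{B}_\alpha$ (trivial on the unipotent radical) and tensoring with $\mathbb{C}_{-\lambda_0}$, I may reduce to the case where $Z$ acts trivially, so that $V$ is effectively an indecomposable $\widetilde{B}'_\alpha$-module.

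Next I will classify the finite-dimensional indecomposable $\widetilde{B}'_\alpha$-modules. Decompose $V = \bigoplus_\mu V_\mu$ into weight spaces for the one-dimensional torus of $\widetilde{B}'_\alpha$. The generator $x_{-\alpha}$ of the Lie algebra of the unipotent radical of $\widetilde{B}'_\alpha$ acts as a nilpotent endomorphism shifting weights by $-\alpha$. Grouping the weights of $V$ into cosets modulo $\mathbb{Z}\alpha$ yields a direct sum decomposition of $V$ as a $\widetilde{B}'_\alpha$-module, so indecomposability places all weights of $V$ in a single $\alpha$-string. Regarding $V$ as a module over $\mathbb{C}[x_{-\alpha}]$, its Jordan decomposition expresses $V$ as a sum of cyclic submodules, each of which corresponds to a chain of one-dimensional weight spaces linked by $x_{-\alpha}$; because these chains are $\widetilde{B}'_\alpha$-stable, indecomposability forces $V$ to consist of a single such chain of some length $n+1$. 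That chain is precisely the restriction to $\widetilde{B}'_\alpha$ of the $(n+1)$-dimensional irreducible $\widetilde{G}_\alpha$-module, twisted by the character of $\widetilde{B}'_\alpha$ given by the lowest weight of the chain.

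Assembling the pieces, the $Z$-character from the first step together with the irreducible $\widetilde{G}_\alpha$-module and the weight character from the Borel classification combine to produce the desired decomposition $V \cong V' \otimes \mathbb{C}_\lambda$ with $V'$ an irreducible $\widetilde{L}_\alpha$-module and $\lambda$ a character of $\widetilde{B}_\alpha$, proving (1). For (2), any finite-dimensional $B_\alpha$-module pulls back along the covering $\pi \colon \widetilde{L}_\alpha \to L_\alpha$ to a $\widetilde{B}_\alpha$-module, and since $\pi$ is surjective with a central (finite) kernel, the pullback preserves indecomposability; invoking (1) on the pullback then yields (2).

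The main technical obstacle is the step showing that within a single $\alpha$-string the Jordan block decomposition of $x_{-\alpha}$ matches the $\widetilde{B}'_\alpha$-submodule structure, i.e.\ that the cyclic $\mathbb{C}[x_{-\alpha}]$-summands can be chosen to be genuine $\widetilde{B}'_\alpha$-submodules. This requires choosing generators of the Jordan blocks that are homogeneous for the torus action (so that the resulting chains of weight spaces are torus-stable), which is possible because $x_{-\alpha}$ is already homogeneous of weight $-\alpha$ and one may apply the standard refinement of the Jordan decomposition in the presence of a semisimple commuting action.
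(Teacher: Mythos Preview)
Your argument is correct. The paper's own proof is much terser: for (1) it simply cites \cite[p.~130, Corollary~9.1]{Ka1}, and for (2) it observes---exactly as you do---that every $B_\alpha$-module becomes a $\widetilde{B}_\alpha$-module via the covering map. So for (2) you and the paper agree verbatim; the difference lies entirely in (1), where you have effectively reproved the cited result from \cite{Ka1} by hand.

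Your direct argument for (1) is sound: reduce to the Borel of $SL_2$ by splitting off the central torus, then use a graded Jordan decomposition of the nilpotent operator $x_{-\alpha}$ to see that an indecomposable module is a single weight chain, which is precisely the restriction of an irreducible $SL_2$-module up to a character twist. The ``technical obstacle'' you flag---choosing Jordan generators to be torus-homogeneous---is handled by the standard structure theorem for finitely generated graded modules over the graded PID $\mathbb{C}[x_{-\alpha}]$, and you identify this correctly. One small point you leave implicit in the assembly step: to write $V\cong V'\otimes\mathbb{C}_\lambda$ with $V'$ an honest $\widetilde{L}_\alpha$-module (not just an $SL_2$-module), you need that the fundamental weight $\omega_\alpha$ lies in $X(\widetilde{T})$, which holds because $\widetilde{G}$ is simply connected; then if $\mu$ is the highest weight of $V$ and $n+1=\dim V$, one takes $V'$ of highest weight $n\omega_\alpha$ and $\lambda=\mu-n\omega_\alpha$. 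With that made explicit, your proof is a complete, self-contained substitute for the citation.
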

\begin{proof}
Proof of (1) follows from \cite[p.130, Corollary 9.1]{Ka1}.

Proof of (2) follows from the fact that every $B_{\alpha}$-module can be viewed as a $\widetilde{B}_{\alpha}$-module via the natural homomorphism.
\end{proof}

Now, we prove the following: 
\begin{corollary}\label{commuting}
 Let $w=s_{i_1}s_{i_2}\cdots s_{i_r}$ be a reduced expression for $w$ such that
 $\langle \alpha_{i_j}, \alpha_{i_r} \rangle =0$ for every $j=1,2, \ldots, r-1$.
 Then, $H^0(w, \alpha_{i_r})$ is isomorphic to $H^0(s_{i_r}, \alpha_{i_r})
 (\simeq sl_{2, \alpha_{i_r}})$.
\end{corollary}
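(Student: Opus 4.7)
The plan is to exploit the orthogonality hypothesis, which forces $s_{i_r}$ to commute with each $s_{i_j}$ ($j<r$), to rewrite the reduced expression so that $s_{i_r}$ appears first, and then peel off the remaining reflections using the trivial case of Lemma~\ref{dem1}.

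First, since $\langle \alpha_{i_r},\alpha_{i_j}\rangle=0$ for $1\leq j\leq r-1$, the reflections $s_{i_r}$ and $s_{i_j}$ commute, so successive commutation moves transform $s_{i_1}\cdots s_{i_r}$ into the reduced expression $s_{i_r}s_{i_1}\cdots s_{i_{r-1}}$. Set $v:=s_{i_1}\cdots s_{i_{r-1}}$. Because $H^0(w,\alpha_{i_r})$ is independent of the chosen reduced expression, I can apply the short exact sequence \emph{SES}~(1) of the excerpt with $\gamma=\alpha_{i_r}$ to obtain
$$H^0(w,\alpha_{i_r})\;\simeq\;H^0(s_{i_r},\,H^0(v,\alpha_{i_r})).$$

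Next, I would prove by induction on $k$ that $H^0(v_k,\alpha_{i_r})\simeq\mathbb{C}_{\alpha_{i_r}}$, where $v_k:=s_{i_1}\cdots s_{i_k}$ for $1\leq k\leq r-1$. The base case $k=1$ follows immediately from Lemma~\ref{dem1}(1), since $\langle\alpha_{i_r},\alpha_{i_1}\rangle=0$. For the inductive step, apply Lemma~\ref{lemma2.2}(1) to $v_k=v_{k-1}s_{i_k}$ with $\lambda=\alpha_{i_r}$ and $\alpha=\alpha_{i_k}$ (which is legitimate because $\langle\alpha_{i_r},\alpha_{i_k}\rangle=0\geq 0$), and use Lemma~\ref{dem1}(1) on the inner cohomology to obtain
$$H^0(v_k,\alpha_{i_r})\;=\;H^0(v_{k-1},H^0(s_{i_k},\alpha_{i_r}))\;=\;H^0(v_{k-1},\mathbb{C}_{\alpha_{i_r}}),$$
which equals $\mathbb{C}_{\alpha_{i_r}}$ by the induction hypothesis. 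In particular, $H^0(v,\alpha_{i_r})\simeq\mathbb{C}_{\alpha_{i_r}}$.

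Substituting this into the displayed isomorphism gives
$$H^0(w,\alpha_{i_r})\;\simeq\;H^0(s_{i_r},\mathbb{C}_{\alpha_{i_r}})\;=\;H^0(s_{i_r},\alpha_{i_r})\;\simeq\;sl_{2,\alpha_{i_r}},$$
completing the proof. No serious obstacle arises; the only point demanding care is verifying that the commutation rearrangement really yields a genuine reduced expression beginning with $s_{i_r}$, so that \emph{SES}~(1) can legitimately be invoked with $\gamma=\alpha_{i_r}$. Once that is in hand, the argument is a clean two-stage cohomological peeling, each stage using only the trivial ``orthogonal'' case $\langle\lambda,\alpha\rangle=0$ of the Demazure short exact sequence.
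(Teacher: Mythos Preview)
Your proof is correct, but it follows a genuinely different route from the paper's. The paper keeps $s_{i_r}$ at the right end throughout: it peels off the leftmost reflection $s_{i_1}$ via \emph{SES}, so that by induction on $l(v)$ one has $H^0(w,\alpha_{i_r})\simeq H^0(s_{i_1},H^0(s_{i_r},\alpha_{i_r}))$, and then argues that the three-dimensional module $H^0(s_{i_r},\alpha_{i_r})\simeq sl_{2,\alpha_{i_r}}$ is \emph{trivial} as a $B_{\alpha_{i_1}}$-module (since all its weights $\alpha_{i_r},0,-\alpha_{i_r}$ pair to zero with $\alpha_{i_1}$), invoking Lemma~\ref{lemma2.4} to justify this. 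You instead commute $s_{i_r}$ to the front first, apply \emph{SES} once with $\gamma=\alpha_{i_r}$, and then work only with the one-dimensional module $\mathbb{C}_{\alpha_{i_r}}$, peeling off the remaining reflections from the right via Lemma~\ref{lemma2.2}(1) and the trivial case Lemma~\ref{dem1}(1). Your approach is slightly more elementary in that it avoids the structure theorem for indecomposable $B_\alpha$-modules; the paper's approach, on the other hand, requires no rearrangement of the reduced expression.
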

\begin{proof}
Since $L_{\alpha_{i_r}}/B_{\alpha_{i_r}}\hookrightarrow P_{\alpha_{i_r}}/B$ is an isomorphism, we have 
  $$sl_{2, \alpha_{i_r}} \simeq H^0(L_{\alpha_{i_r}}/B_{\alpha_{i_r}}, \alpha_{i_r}) \simeq 
  H^0(s_{i_r}, \alpha_{i_r}).$$
  We note that $sl_{2, \alpha_{i_r}}$ gets a natural $B$-module structure via the above 
  isomorphism $sl_{2, \alpha_{i_r}} \simeq H^0(s_{i_r}, \alpha_{i_r})$.

Let $v=s_{i_1}s_{i_2}\cdots s_{i_{r-1}}$. If $l(v)=0$, then $w=s_{i_r}$ and  we are done. 
Otherwise, 
 let $v^{'}=s_{i_2}\cdots s_{i_{r-1}}$.
 By induction on $l(v)$, we have $$H^0(s_{i_2}\cdots s_{i_r}, \alpha_{i_r})=H^0(s_{i_r}, 
 \alpha_{i_r}).$$
By {\it SES}, we have 
$$H^0(w, \alpha_{i_r})=H^0(s_{i_1}, H^0(s_{i_2}\ldots s_{i_r}, \alpha_{i_r}))=H^0(s_{i_1},
H^0(s_{i_r},\alpha_{i_r})).$$

Since $\langle \alpha_{i_r}, \alpha_{i_1} \rangle =0$ and 
$\langle -\alpha_{i_r}, \alpha_{i_1} \rangle =0$, by Lemma \ref{lemma2.4}, 
$H^0(s_{i_r}, \alpha_{i_r})$
is the trivial $B_{\alpha_{i_1}}$-module of dimension $3$. Hence, the vector bundle
$\mathcal L (s_{i_1}, H^0(s_{i_r}, \alpha_{i_r}))$
on $X(s_{i_1})\simeq \mathbb P^1$ is the trivial bundle of rank $3$. 
Thus, we have $$ H^0(s_{i_1}, H^0(s_{i_r},\alpha_{i_r}))=H^0(s_{i_r}, \alpha_{i_r}).$$\end{proof}

We recall the following vanishing results  from \cite{Ka4} (see  \cite[ Corollary 3.6]{Ka4}
and \cite[ Corollary 4.10]{Ka4}).

\begin{lemma}\label{vanishing}
Let $w\in W$, and $\alpha \in R^+$. Then, we have
  \begin{enumerate}
   \item $H^j(w, \alpha)=0$ for all $j\geq 2$.
   \item If $G$ is simply laced, $H^j(w, \alpha)=0$ for all $j\geq 1$.
  \end{enumerate}
 \end{lemma}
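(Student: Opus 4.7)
The plan is to prove both statements by induction on $l(w)$, exploiting the short exact sequence coming from the fibration $g : Z(w, \underline i) \to P_\gamma/B \simeq \mathbb P^1$ whose fibres are $Z(v, \underline i')$, where $w = s_\gamma v$ is a reduced expression with $l(w)=l(v)+1$. The base case $l(w)=0$ is immediate since $Z(w, \underline i)$ is then a point. For the inductive step, the SES of the excerpt takes the form
$$0 \to H^1(s_\gamma, H^{j-1}(v, \alpha)) \to H^j(w, \alpha) \to H^0(s_\gamma, H^j(v, \alpha)) \to 0$$
for each $j \geq 1$ (with the left-hand term interpreted as $0$ when $j=0$).

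For statement (1) with $j \geq 3$, both flanking terms vanish by the inductive hypothesis, so $H^j(w, \alpha)=0$. For $j=2$, the inductive hypothesis still kills $H^2(v, \alpha)$, and the problem reduces to showing $H^1(s_\gamma, H^1(v, \alpha)) = 0$. For statement (2), the simply laced inductive hypothesis kills $H^1(v, \alpha)$, and the problem reduces to showing $H^1(s_\gamma, H^0(v, \alpha)) = 0$; the case $j \geq 2$ is subsumed by (1).

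In both remaining reductions the task is to prove $H^1(s_\gamma, M) = 0$ for $M = H^1(v, \alpha)$ or $M = H^0(v, \alpha)$, viewed as a $B_\gamma$-module. Lemma \ref{lemma2.4} decomposes $M$ as a direct sum of indecomposables $V' \otimes \mathbb C_{\lambda}$ with $V'$ an irreducible $\widetilde L_\gamma$-module, and Lemma \ref{lemma2.3}(2) and (4) guarantee that the $H^1$ of such a summand vanishes precisely when $\langle \lambda, \gamma \rangle \geq -1$. So the entire problem collapses to showing that no character $\lambda$ of an indecomposable $B_\gamma$-summand of $M$ satisfies $\langle \lambda, \gamma \rangle \leq -2$.

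The main obstacle is exactly this character bookkeeping. The plan is to strengthen the induction: prove not merely the vanishing of the cohomology modules but also a structural bound on the characters of their indecomposable $B_{\gamma'}$-summands (for every simple root $\gamma'$), using Lemma \ref{dem1} to track how characters shift under the functors $H^0(s_{\gamma'}, \cdot)$ and $H^1(s_{\gamma'}, \cdot)$ appearing at each stage of the SES. In the simply laced case the uniform constraint $\langle \beta_1, \beta_2 \rangle \in \{-1, 0, 1\}$ for distinct positive roots $\beta_1, \beta_2$ keeps the admissible shifts small enough to block any character with $\langle \lambda, \gamma \rangle \leq -2$ from appearing in $H^0(v, \alpha)$; in the general non-simply-laced case this fails for $H^0$ but survives for $H^1$, which is precisely why statement (1) can only control $j \geq 2$. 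Carrying through this weight analysis is the technical heart of \cite{Ka4}, and is the step I would expect to occupy the bulk of the work.
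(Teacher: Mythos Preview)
The paper does not prove this lemma at all: it is introduced with the sentence ``We recall the following vanishing results from \cite{Ka4} (see \cite[Corollary 3.6]{Ka4} and \cite[Corollary 4.10]{Ka4})'' and no argument is given. So there is no ``paper's own proof'' to compare your proposal against.

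Your sketch is a reasonable outline of how such a proof would go, and you correctly set up the induction via the SES and correctly isolate the crux: for part (1) one must show $H^1(s_\gamma, H^1(v,\alpha))=0$, and for part (2) one must show $H^1(s_\gamma, H^0(v,\alpha))=0$, which via Lemmas \ref{lemma2.3} and \ref{lemma2.4} reduces to bounding $\langle \lambda, \gamma\rangle \geq -1$ for the twisting characters of all indecomposable $B_\gamma$-summands. You also correctly flag that closing this step requires a strengthened induction tracking weight bounds through the SES, and that in the simply laced case the constraint $|\langle \beta_1,\beta_2\rangle|\leq 1$ for distinct roots is what makes the $H^0$ case go through. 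You then explicitly defer the actual execution of this weight bookkeeping to \cite{Ka4}. As a self-contained proof your proposal is therefore incomplete by design, but since the paper itself simply cites \cite{Ka4} for the result, your level of detail already exceeds what the paper provides.
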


  Let $T_{G/B}$ denote the tangent bundle of the 
flag variety $G/B$. By abuse of notation, we denote the restriction $T_{G/B}$ to $X(w)$ by 
$T_{G/B}$. As we discussed in the introduction about the condition $w^{-1}(\alpha_0)<0$, 
we state the following theorem from \cite{Ka4} (see \cite[Theorem 3.7, Theorem 3.8 and 
Theorem 4.11]{Ka4}).
 
\begin{theorem}\label{AS}
Let $w \in W$. Then
\begin{enumerate}

\item
$H^{i}(X(w), T_{G/B})=0$ for every $i\geq 1$. 
\item
The adjoint representation $\mathfrak{g}$ of $G$ is a $B$-submodule of $H^{0}(X(w) , T_{G/B})$ 
if and only if $w^{-1}(\alpha_{0})<0$.
\item If $G$ is simply laced, $H^{0}(X(w) , T_{G/B})$ is the adjoint representation 
$\mathfrak{g}$ of  $G$ if and only if $w^{-1}(\alpha_{0})<0$. 
\item Assume that $G$ is simply laced and $X(w)$ is a smooth Schubert variety. 
Let $Aut^0(X(w))$ be the connected component 
of the automorphism group of $X(w)$ containing the identity automorphism.
Let $P_w$ denote the stabilizer of $X(w)$ in $G$. Let $\phi_w: P_w\longrightarrow Aut^0(X(w))$ be the homomorphism 
induced by the action of $P_w$ on $X(w)$. 
Then, we have \\(i) $\phi_w: P_w\longrightarrow Aut^0(X(w))$ is surjective.
\\(ii) $\phi_w: P_w\longrightarrow Aut^0(X(w))$ is an isomorphism if and only if $w^{-1}(\alpha_0)<0$.
\end{enumerate}
\end{theorem}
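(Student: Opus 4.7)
The strategy is to reduce all assertions to cohomology of line bundles via the short exact sequence of $B$-modules
$$0\to \mathfrak{b}\to \mathfrak{g}\to \mathfrak{g}/\mathfrak{b}\to 0.$$
The middle term extends to a $G$-module so $\mathcal L(w,\mathfrak g)$ is trivial of rank $\dim\mathfrak g$ on $X(w)$; together with Kempf's vanishing $H^i(X(w),\mathcal O_{X(w)})=0$ for $i\geq 1$ this yields $H^0(w,\mathfrak g)=\mathfrak g$ and $H^i(w,\mathfrak g)=0$ for $i\geq 1$. Since $T_{G/B}=\mathcal L(w,\mathfrak g/\mathfrak b)$, the long exact sequence collapses to
$$0\to H^0(w,\mathfrak b)\to \mathfrak g\to H^0(w,T_{G/B})\to H^1(w,\mathfrak b)\to 0,\qquad H^i(w,T_{G/B})\simeq H^{i+1}(w,\mathfrak b)\text{ for }i\geq 1.$$
Every assertion now becomes a statement about the cohomology of $\mathfrak b$ or $\mathfrak g/\mathfrak b$, both of which carry $B$-filtrations with one-dimensional weight subquotients.

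For (1), I would filter $\mathfrak g/\mathfrak b$ by $B$-submodules with subquotients $\mathbb C_\alpha$, $\alpha\in R^+$, apply Lemma \ref{vanishing}(1) inductively through the long exact sequences, and obtain $H^j(w,\mathfrak g/\mathfrak b)=0$ for $j\geq 2$; the sharper vanishing $H^1(w,T_{G/B})=0$ requires running the analogous induction on a filtration of $\mathfrak b$ and invoking the ascending $SES$ of Lemma \ref{lemma2.2} to kill higher cohomology of the negative-root weight subquotients. For (2) and (3), the displayed four-term exact sequence identifies $\mathfrak g\hookrightarrow H^0(w,T_{G/B})$ with $H^0(w,\mathfrak b)=0$, and identifies equality of the two with the joint vanishing of $H^0(w,\mathfrak b)$ and $H^1(w,\mathfrak b)$; Lemma \ref{vanishing}(2) makes $H^1(w,\mathfrak b)$ vanish in the simply laced case, reducing (3) to (2). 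The key equivalence
$$H^0(w,\mathfrak b)=0\iff w^{-1}(\alpha_0)<0$$
is then proved by an ascending $SES$ computation along a reduced expression of $w$: arranging the filtration of $\mathfrak b$ so that $\mathbb C_{-\alpha_0}$ sits at the bottom, each weight $-\beta$ with $\beta\neq\alpha_0$ is killed at some step by Lemma \ref{lemma2.2}(4), while the bottom piece $H^0(w,-\alpha_0)$ survives precisely when $w$ fails to invert the highest root.

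For (4), smoothness of $X(w)$ fits $T_{G/B}|_{X(w)}$ into the normal bundle sequence with $T_{X(w)}$, and a separate argument (using vanishing of the normal bundle cohomology for smooth Schubert varieties) identifies $H^0(X(w),T_{X(w)})$ with the Lie algebra of $Aut^0(X(w))$ as a suitable subspace of $H^0(w,T_{G/B})$. The differential of $\phi_w$ is the $P_w$-action map $\mathfrak p_w\to H^0(X(w),T_{X(w)})$; surjectivity (i) follows by combining (3) with an explicit identification of the image of $\mathfrak p_w$, and is then promoted to surjectivity of $\phi_w$ by connectedness of the two groups. For (ii), the kernel of $d\phi_w$ vanishes iff $H^0(w,\mathfrak b)=0$, i.e.\ iff $w^{-1}(\alpha_0)<0$. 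The main obstacle will be the careful analysis of $H^0(w,\mathfrak b)$ in (2): pinning down $-\alpha_0$ as the unique obstruction weight in $\mathfrak b$ requires delicate bookkeeping along the chosen reduced expression, and in (4) one must additionally separate $T_{X(w)}$ from $T_{G/B}|_{X(w)}$, which brings in normal-bundle cohomology that must independently be shown to vanish.
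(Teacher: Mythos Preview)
This theorem is not proved in the present paper; it is quoted from \cite{Ka4} (Theorems 3.7, 3.8 and 4.11 there). So there is no in-paper proof to compare against, and I can only assess your outline on its own.

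Your reduction via $0\to\mathfrak b\to\mathfrak g\to\mathfrak g/\mathfrak b\to 0$ and the identification $H^i(w,T_{G/B})\simeq H^{i+1}(w,\mathfrak b)$ for $i\geq 1$ is exactly the right starting point, and is how \cite{Ka4} proceeds as well. The gap is in your argument for the key equivalence $H^0(w,\mathfrak b)=0\iff w^{-1}(\alpha_0)<0$. Your claim that ``each weight $-\beta$ with $\beta\neq\alpha_0$ is killed at some step by Lemma~\ref{lemma2.2}(4)'' is false: already for $G=PSL_3$ and $w=s_1$ one has $\langle-\alpha_0,\alpha_1\rangle=-1$, so $H^0(s_1,-\alpha_0)=0$, yet $H^0(s_1,\mathfrak b)_{-\alpha_0}\neq 0$ because the $-\alpha_0$ weight in $H^0(s_1,\mathfrak b)$ arises from the indecomposable $B_{\alpha_1}$-summand $\mathbb C_{-\alpha_2}\oplus\mathbb C_{-\alpha_0}$, not from the bottom subquotient. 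A naive filtration-by-characters argument cannot detect this. What actually works is the injectivity of the evaluation map $H^0(w,\mathfrak b)\hookrightarrow\mathfrak b$ (this is \cite[Lemma~2.6]{Ka4} and is itself nontrivial): once you know $H^0(w,\mathfrak b)$ is a $B$-submodule of $\mathfrak b$, it is nonzero iff it contains the unique $B$-stable line $\mathfrak g_{-\alpha_0}$, and one then argues on the $-\alpha_0$ weight space of the four-term sequence, using $H^1(w,\mathfrak b)_{-\alpha_0}=0$ (again from \cite{Ka4}) together with a geometric argument that $U_{-\alpha_0}$ acts nontrivially on $X(w)$ precisely when $w^{-1}(\alpha_0)<0$.

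Your sketches for (1) and (4) are also incomplete. For (1) you need $H^j(w,\mathfrak b)=0$ for $j\geq 2$; filtering $\mathfrak b$ gives subquotients $\mathbb C_{-\beta}$, and Lemma~\ref{vanishing} says nothing about negative roots, so ``the analogous induction'' does not obviously close. For (4), the passage from $T_{G/B}|_{X(w)}$ to $T_{X(w)}$ via the normal bundle sequence is the substantive step, and the required vanishing of normal-bundle cohomology is not a formality; you have correctly flagged this as an obstacle, but the outline does not indicate how it is overcome.
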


\section{Vanishing of the Higher Cohomology of the Tangent Bundle of $Z(w, \underline i)$:}

In this section, we prove that a BSDH variety has unobstructed 
deformations and it has no deformations whenever the group $G$ is simply 
laced. 

We recall that the BSDH-variety corresponding to 
 a reduced expression $w=s_{{i_1}}s_{{i_2}}\cdots s_{{i_r}}$ is denoted by 
 $Z(w, \underline i)$ 
 and we denote the tangent bundle of $Z(w,\underline i)$ by $T_{(w,\underline i)}$, where 
 $\underline i=(i_1, i_2, \ldots, i_r)$.

Let $w =s_{{i_1}}s_{{i_2}}\cdots s_{{i_r}}$, 
$v=s_{{i_1}}s_{{i_2}}\cdots 
s_{{i_{r-1}}}$ and $\underline i'=(i_1,i_2,\ldots, i_{r-1})$. Note that $l(v)= l(w)-1$.
Consider the fibration $f_r:Z(w, \underline i) \longrightarrow Z(v,\underline i')$
as in Section $2$. 
One can easily see that this fibration is the fibre product of 
$\pi_r:G/B\to G/P_{\alpha_{i_{r}}}$ and 
$\pi_r\circ\phi_{v}:Z(v, \underline {i'}) \to G/P_{\alpha_{i_{r}}}$;
namely, we have the following commutative diagram :

\begin{center}  
$\xymatrix{ Z(v, \underline i')\times_{ G/P_{\alpha_{i_r}}} G/B =Z(w, \underline i)
\ar[d]_{f_r}\ar[rrr]^{\phi_w} &&&  G/B \ar[d]_{\pi_r}\\ 
Z(v, \underline i') \ar[rrr]^{\pi_r\circ\phi_{v}} &&& G/P_{\alpha_{i_r}}}$ 
\end{center}

The relative tangent bundle of $\pi_r$ is the line bundle $\mathcal{L}(w_0, \alpha_{i_r})$. 
Hence the relative tangent
bundle of $f_r$ is $\phi_w^*\mathcal{L}(w_0, \alpha_{i_r})$. 
By taking the differentials of this smooth fibration $f_{r}$ we obtain the following exact 
sequence:
$$ 0\to \phi_w^*\mathcal{L}(w_0, \alpha_{i_r}) \to T_{(w, \underline i)} \to 
f_r^*T_{(v, \underline i')} \to 0 . \hspace{1cm} (rel)$$

We use the above short exact sequence $(rel)$ and Lemma \ref{vanishing} to prove the following:

\begin{proposition}\label{prop3.1}
 Let $w\in W$, $w =s_{{i_1}}s_{{i_2}}\cdots s_{{i_r}}$ 
be a reduced expression for $w$ and let $\underline i=(i_1, i_2, \ldots, i_r)$. Then, we have
 \begin{enumerate}
  \item $H^j(Z(w, \underline i), T_{(w, \underline i)})=0$ for all $j\geq 2$.
  \item If $G$ is simply laced, $H^j(Z(w, \underline i), T_{(w, \underline i)})=0$ for all
  $j\geq 1$.
 \end{enumerate}
\end{proposition}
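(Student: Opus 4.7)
The plan is to proceed by induction on the length $r=l(w)$, feeding the short exact sequence $(rel)$ into the long exact sequence in cohomology and invoking Lemma \ref{vanishing} together with the Leray spectral sequence for the $\mathbb P^1$-fibration $f_r$.

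For the base case $r=1$, the variety $Z(s_{i_1},\underline i) \simeq P_{\alpha_{i_1}}/B \simeq \mathbb P^1$, whose tangent bundle is $\mathcal O(2)$, so all higher cohomology vanishes. For the inductive step, I would apply $H^*$ to the sequence
\[
0\to \phi_w^*\mathcal L(w_0,\alpha_{i_r})\to T_{(w,\underline i)}\to f_r^*T_{(v,\underline i')}\to 0,
\]
and conclude the vanishing of the middle term from the vanishing of both flanks.

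For the left-hand flank, since $\phi_w^*\mathcal L(w_0,\alpha_{i_r})$ is the pullback from $X(w)$ of the line bundle $\mathcal L(w,\alpha_{i_r})$, its cohomology is $H^j(w,\alpha_{i_r})$. Lemma \ref{vanishing}(1) gives vanishing for $j\geq 2$ in general, and Lemma \ref{vanishing}(2) strengthens this to $j\geq 1$ in the simply laced case. For the right-hand flank, $f_r$ is a $\mathbb P^1$-fibration, so by the projection formula
\[
f_{r*}f_r^*T_{(v,\underline i')}=T_{(v,\underline i')}\otimes f_{r*}\mathcal O_{Z(w,\underline i)}=T_{(v,\underline i')},\qquad R^jf_{r*}f_r^*T_{(v,\underline i')}=0\text{ for }j\geq 1,
\]
using $H^0(\mathbb P^1,\mathcal O)=\mathbb C$ and $H^j(\mathbb P^1,\mathcal O)=0$ for $j\geq 1$. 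Hence the Leray spectral sequence degenerates and yields
\[
H^j(Z(w,\underline i), f_r^*T_{(v,\underline i')})\simeq H^j(Z(v,\underline i'), T_{(v,\underline i')}),
\]
which vanishes in the relevant range by the induction hypothesis (for $j\geq 2$ in general, and $j\geq 1$ in the simply laced case).

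Combining the two vanishings in the long exact sequence associated to $(rel)$ gives $H^j(Z(w,\underline i),T_{(w,\underline i)})=0$ in the asserted range, completing the induction. No step is really an obstacle here; the only mild subtlety is verifying that $\phi_w^*\mathcal L(w_0,\alpha_{i_r})$ is identified with the pullback of $\mathcal L(w,\alpha_{i_r})$ so that Lemma \ref{vanishing} directly applies, but this is precisely the content of the description of the relative tangent bundle of $\pi_r\colon G/B\to G/P_{\alpha_{i_r}}$ given just before the short exact sequence $(rel)$.
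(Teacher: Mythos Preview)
Your proof is correct and follows essentially the same approach as the paper: both argue by induction on $l(w)$ using the long exact sequence associated to $(rel)$, identifying the cohomology of the relative tangent bundle with $H^j(w,\alpha_{i_r})$ via the isomorphism from \cite[Theorem 3.3.4(b)]{Bri} and using the $\mathbb P^1$-fibration $f_r$ (via projection formula and Leray, as you do, or equivalently via \cite[Corollary 12.9]{Hart} and \cite[14.6(3)]{Jan} as the paper cites) to reduce the right-hand flank to the inductive hypothesis. Your explicit base case and spelled-out projection-formula argument are minor presentational differences, not a different route.
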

\begin{proof} We start by  proving (2).  We first recall the following isomorphism 
(see \cite[ Theorem $3.3.4 (b)$ ]{Bri}):
$$H^j(Z(w, \underline i), \phi_w^*\mathcal L(w_0, \alpha_{i_r}))\simeq H^j(X(w), 
\mathcal L(w, \alpha_{i_r}))=H^j(w, 
\alpha_{i_r}) ~~ for~ all~~ j\geq 0.$$

Let $v=s_{{i_1}}s_{{i_2}}\cdots s_{{i_{r-1}}}$ and
$\underline i'=(i_1, i_2, \ldots, i_{r-1})$. 
Since $f_r:Z(w, \underline i) \longrightarrow Z(v,\underline i')$ is a 
smooth fibration with fibre $\mathbb P^1$, by using \cite[p.288, Corollary $12.9$]{Hart} and  
\cite[p.369, Section $14.6 (3)$ ]{Jan} 
we have $H^j(Z(w, \underline i), f_r^*T_{(v, \underline i')})=
H^j(Z(v, \underline i'), T_{(v, \underline i')})$ for every $j\geq 0$. 

By considering the long exact sequence associated to the short exact sequence $(rel)$ and using 
above arguments, we have the following long exact sequence of $B$-modules:\\ 
 $0\longrightarrow H^0(w, \alpha_{i_r})\longrightarrow H^0(Z(w, \underline i), 
 T_{(w,\underline i)})\longrightarrow H^0(Z(v, \underline i'), T_{(v, \underline i')})
 \longrightarrow H^1(w, \alpha_{i_r})\longrightarrow \\
 H^1(Z(w, \underline i), T_{(w,\underline i)})\longrightarrow H^1(Z(v, \underline i), 
 T_{(v,\underline i')})
 \longrightarrow H^2(w, \alpha_{i_r})\longrightarrow H^2(Z(w, \underline i), 
 T_{(w,\underline i)}) \longrightarrow \\ H^2(Z(v, \underline i'), T_{(v, \underline i')})
 \longrightarrow H^3(w, \alpha_{i_r})\longrightarrow \cdots$.\\
  Since $G$ is simply laced, by Lemma \ref{vanishing} (2), we have $H^j(w, \alpha_{i_r})=0$
  for every $j\geq 1$. Thus we have $H^j(Z(w, \underline i), T_{(w,\underline i)})=
  H^j(Z(v, \underline i'), T_{(v,\underline i')})$ for every $j \geq 1$. Now the proof follows
  by induction on $l(w)$.

Proof of (1) is similar by using Lemma \ref{vanishing} (1).
\end{proof}

Note: The long exact sequence associated to the short exact sequence $(rel)$ which is considered 
in the proof of the Proposition \ref{prop3.1} will be used
 frequently in the future.
 We call this $\it{LES}$.
 
 Proposition \ref{prop3.1}(1) yields
$H^2(Z(w, \underline i), T_{(w, \underline i)})=0$. Hence, 
we see that $Z(w, \underline i)$ has unobstructed deformations. 
That is, $Z(w, \underline i)$ admits a smooth versal deformation 
(see \cite [p.273, lines 19-21]{Huy}). 

If in addition $G$ is simply laced, 
Proposition \ref{prop3.1}(2) yields 
$H^1(Z(w, \underline i), T_{(w, \underline i)})=0$. Using \cite [p. 272, Proposition 6.2.10]{Huy},
we see that $Z(w, \underline i)$ has no deformations. That is, a BSDH variety for a simply laced 
group $G$ is rigid.

\section{Cohomology of the relative tangent bundle on $Z(w, \underline i)$}

In this section, we compute the cohomology groups of the 
relative tangent bundle on $Z(w, \underline i)$.

We use the notation as in the previous section. 
For a $B$-module $V$ and a character $\mu\in X(T)$, we denote by $V_{\mu}$, the 
weight space for the action of $T$. By the definition, it is the space of all 
vectors $v$ in 
$V$ such that, for all $t\in T$, $t\cdot v=\mu(t)v$.  We denote by $dim(V_{\mu})$ the dimension of 
the space $V_{\mu}$.  

Given a weight $\lambda \in X(T)$ and a simple root $\gamma \in S$ such that $\langle \lambda, 
\gamma \rangle \geq 0$,
we  recall that the $\gamma$-string of $\lambda$ 
is the set $\{\lambda, \lambda-\gamma, \cdots, \lambda-\langle \lambda, \gamma 
\rangle \gamma\}$ of weights, which by Lemma \ref{dem1}, is the set of weights occuring in $H^0(s_{\gamma}, \lambda)$.

Recall, the partial order $\leq$ on $X(T)$ given by $\mu\leq \lambda$
if $\lambda-\mu$ is a non-negative integral linear combination of simple 
roots. We say that $\mu < \lambda$ if in addition $\lambda-\mu$ is non zero.

We begin by proving the following Lemma:

 Let $R_s$ (respectively, $ R^-_s$) be the set of short roots (respectively, negative short roots).
 \begin{lemma}\label{l1}
 Let $w\in W$, $V$ be a $B$-module. Then we have 
 \begin{enumerate} 
 \item
  If there is a character $\lambda_0\in X(T)$ such that  $V_{\mu}=0$ unless $\mu \leq \lambda_0$ 
  (respectively,  $\mu< \lambda_0$), 
 then $H^0(w, V)_{\mu}=0$ unless $\mu \leq \lambda_0$ 
  (respectively,  $\mu< \lambda_0$).
 \item If $V_{\mu}=0$ for every $\mu\in X(T)\setminus (R\cup \{0\})$, then $H^0(w, V)_{\mu}=0$ 
 for every $\mu\in X(T)\setminus (R\cup \{0\})$.
 \item If $V_{\mu}=0$ for every $\mu \in X(T)\setminus (R_s \cup \{0\})$, then $H^0(w, V)_{\mu}=0$
 for every $\mu \in X(T)\setminus (R_s \cup \{0\})$.
 \item If $V_{\mu}=0$ for every $\mu\in X(T)\setminus (R^-_{s}\cup \{0\})$, 
 then $H^0(w, V)_{\mu}=0$ for every $\mu\in X(T)\setminus (R^-_s \cup \{0\})$. 
 \end{enumerate}
\end{lemma}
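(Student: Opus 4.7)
All four statements share the same shape: a subset $S \subset X(T)$ is specified, $V$ is a $B$-module all of whose $T$-weights lie in $S$, and one wants the weights of $H^0(w,V)$ to lie in $S$ as well. My plan is to treat the four parts uniformly by induction on $\ell(w)$; the base case $w = e$ gives $H^0(e,V) = V$ and is trivial. For the inductive step, I would choose a reduced expression starting with $s_\gamma$ and invoke the first isomorphism of \textit{SES},
$$H^0(w, V) \;\simeq\; H^0\bigl(s_\gamma,\; H^0(s_\gamma w, V)\bigr).$$
By the inductive hypothesis applied to $s_\gamma w$, the module $W := H^0(s_\gamma w, V)$ already has all weights in $S$, so everything reduces to the rank-one assertion: \emph{if $W$ is a finite-dimensional $B_\gamma$-module all of whose weights lie in $S$, then so does $H^0(s_\gamma, W)$.}

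To handle this rank-one assertion I would pick a composition series of $W$ by one-dimensional $B_\gamma$-modules $\mathbb C_{\mu_i}$ with each $\mu_i \in S$ (which exists because $B_\gamma$ is solvable), and then use left exactness of $H^0(s_\gamma,-)$ iteratively on the resulting short exact sequences to conclude that the weights of $H^0(s_\gamma, W)$ are contained in the union over $i$ of the weights of $H^0(s_\gamma, \mathbb C_{\mu_i})$. By Lemma \ref{dem1}, for each $i$ the latter set is empty when $\langle \mu_i,\gamma\rangle<0$, and otherwise equals the $\gamma$-string $\mu_i,\, \mu_i-\gamma,\, \ldots,\, s_\gamma(\mu_i)$. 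Hence the whole lemma reduces to a purely root-theoretic check: \emph{each $S$ is closed under taking these descending $\gamma$-strings starting from elements of $S$.}

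For parts (1) and (2) this closure is straightforward: in (1), $\mu-k\gamma \leq \mu \leq \lambda_0$ preserves the bound (and keeps it strict in the $<$ variant), while (2) is the standard string axiom for roots. The main obstacle will be parts (3) and (4), for which I would first establish the auxiliary fact that the $\gamma$-string through a short root consists of short roots or zero. For $\mu = \gamma$ the string is $\{\gamma,0,-\gamma\}$; for $\mu \neq \pm\gamma$ the length identity
$$|\mu - k\gamma|^2 \;=\; |\mu|^2 + k\bigl(k - \langle \mu,\gamma\rangle\bigr)|\gamma|^2$$
shows that any strictly interior member (with $0 < k < \langle \mu,\gamma\rangle$) would be strictly shorter than $\mu$; since $\mu$ already has minimal length in a two-length irreducible root system, this is impossible, forcing $\langle \mu,\gamma\rangle \leq 1$ and reducing the string to $\{\mu\}$ or $\{\mu,\, s_\gamma(\mu)\}$, both of whose elements are short. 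Part (4) will then follow from (3) combined with the elementary observation that subtracting a non-negative multiple of the positive root $\gamma$ from a negative weight yields a negative weight. Once the string-closure of each $S$ is established, the induction closes and all four parts follow simultaneously.
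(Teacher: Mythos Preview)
Your proof is correct and follows the same inductive skeleton as the paper: induct on $\ell(w)$, use \textit{SES} to write $H^0(w,V)\simeq H^0(s_\gamma,H^0(s_\gamma w,V))$, and reduce everything to a root-string closure property for the four sets $S$.

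The one genuine difference is in how you handle the rank-one step. The paper decomposes $H^0(s_\gamma w,V)$ into indecomposable $B_\gamma$-summands and invokes the structure theorem (Lemma~\ref{lemma2.4}) to write each as $V''\otimes\mathbb C_{\mu'}$ with $V''$ an irreducible $\widetilde L_\gamma$-module, then applies Lemma~\ref{lemma2.3} to compute $H^0(s_\gamma,-)$ on each summand. You instead filter by a composition series of one-dimensional $B_\gamma$-modules and use only left exactness of $H^0(s_\gamma,-)$. Your route is more elementary---it avoids the classification of indecomposable $B_\gamma$-modules entirely---while the paper's route has the advantage of identifying the highest weight $\mu'+\mu''$ of each summand directly, which is what they actually need later (e.g.\ in Lemma~\ref{lemma1} and Proposition~\ref{cor2}). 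For the present lemma either works. Your explicit length computation for part~(3) is also a nice addition; the paper simply asserts that the $\gamma$-string through a short root consists of short roots. Part~(4) is handled identically in both: combine~(3) with the $\lambda_0=0$ case of~(1).
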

\begin{proof}Proof of (1):
Let $V$ be a $B$-module and $\lambda_0\in X(T)$ such that  $V_{\mu}=0$ if $\mu\nleq \lambda_0$.
Proof is by induction on $l(w)$.
 If $l(w)=0$ there is nothing to prove. Otherwise, we can choose a $\gamma \in S$ such that 
 $l(s_{\gamma}w)=l(w)-1$.
 Let $u=s_{\gamma}w$. 
 By {\it{SES}}, the $B$-modules $H^0(s_{\gamma}, H^0(u, V))$ and $H^0(w, V)$ are isomorphic. 
 
 Let $\mu\in X(T)$ be a weight of $H^0(w, V)$ (i.e, $H^0(w, V)_{\mu}\neq 0$). Then there is an 
 indecomposable $B_{\gamma}$-summand $V'$ of 
 $H^0(u, V)$ such that $H^0(s_{\gamma}, V')_{\mu}\neq 0$. By Lemma \ref{lemma2.4}, we have 
 $V'=V''\otimes \mathbb C_{\mu'}$ for some irreducible $\widetilde{L}_{\gamma}$-module $V''$
  and for some character $\mu'$ of $\widetilde{B}_{\gamma}$. By Lemma \ref{lemma2.3}, 
  we have $H^0(s_{\gamma}, V')=V''\otimes H^0(s_{\gamma},\mu')$ and $\langle \mu', 
  \gamma \rangle \geq 0$.
  Now, let $\mu''$ be the highest weight of $V''$. Then, $H^0(s_{\gamma}, V')=H^0(s_{\gamma}, 
  \mu'')\otimes H^0(s_{\gamma}, \mu')$.
 By  the description of the weights of $H^0(s_{\gamma}, 
  \mu'')\otimes H^0(s_{\gamma}, \mu')$, any weight $\lambda$ of 
 $H^0(s_{\gamma}, V')$ is of the form  $\lambda=\mu_1+\mu_2$
where $\mu_1=\mu''-a_1\gamma$ and $\mu_2=\mu'-a_2\gamma$ 
for some integers $0\leq a_1 \leq  \langle \mu'', \gamma \rangle$, $0\leq a_2 \leq  \langle \mu', 
\gamma \rangle$.
Thus, we have  $\lambda=\mu''+\mu'-(a_1+a_2)\gamma$.
  
  Hence, any weight $\lambda$ of $H^0(s_{\gamma}, V')$ satisfies $\lambda\leq \mu'+\mu''$.
  In particular, $\mu\leq \mu'+\mu''$.
  Note that since $\mu'+\mu''$ is the highest weight of $H^0(s_{\gamma}, V')$, $H^0(u, V)_
  {\mu'+\mu''}\neq 0$. 
  By induction on $l(w)$, $\mu'+\mu''\leq \lambda_0$. 
  Hence, we have $\mu\leq \lambda_0$.
  
  Proof of $V_{\mu}=0$ unless  $\mu< \lambda_0 \Longrightarrow H^0(w, V)_{\mu}=0$ unless  $\mu< \lambda_0$ is 
similar.
  
  Proof of (2): Assume that $H^0(w, V)_{\mu}\neq 0$. We use the same notation as 
  in the proof of (1).
  We have $H^0(s_{\gamma}, V')=H^0(s_{\gamma}, \mu')\otimes H^0(s_{\gamma}, \mu'')$.
  Since $V'_{\mu'+\mu''}\neq 0$, by induction on $l(w)$,   $\mu'+\mu''\in R\cup \{0\}$. 
    By the proof of (1), the  weights of $H^0(s_{\gamma}, V')$ are of the form 
  $\mu=\mu'+\mu''-j \gamma$ for some integer $0\leq j \leq \langle \mu'+\mu'', \gamma \rangle$. 
  If $\mu'+\mu''=0$, then $j=0$ and so $\mu=\mu'+\mu''=0$. Otherwise, $\mu'+\mu''$ is a root, it follows that
  $\mu$ is a root (see \cite[p.45, Section 9.4]{Hum1}).
  
  Proof of (3) follows from the proof of (2) because any root in the $\gamma$-string of 
  a short root is short.
  
  Proof of (4) follows from (1) (by taking $\lambda_0=0$) and (3).
  \end{proof}

 \begin{lemma}\label{l2} Let $w\in W$. Then we have,
 $H^1(w, \mathfrak{b})_{\mu}= 0$ unless $\mu$ is a negative short root.
\end{lemma}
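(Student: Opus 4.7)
The plan is to induct on $l(w)$. The base case $l(w) = 0$ is immediate, since $Z(e, \emptyset)$ is a point and so $H^1$ vanishes.

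For the inductive step, I would choose a simple root $\gamma$ with $l(s_\gamma w) = l(w) - 1$, set $u = s_\gamma w$, and apply the short exact sequence \emph{SES} from Section \ref{prelim}:
\[
0 \longrightarrow H^1(s_\gamma, H^0(u, \mathfrak{b})) \longrightarrow H^1(w, \mathfrak{b}) \longrightarrow H^0(s_\gamma, H^1(u, \mathfrak{b})) \longrightarrow 0.
\]
It then suffices to prove that both flanking terms have every $T$-weight in $R_s^-$.

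For the right-hand term $H^0(s_\gamma, H^1(u, \mathfrak{b}))$, the inductive hypothesis gives that every weight of $H^1(u, \mathfrak{b})$ lies in $R_s^-$, hence is strictly $<0$. The strict version of Lemma \ref{l1}(1) (with $\lambda_0 = 0$) then keeps the weights of $H^0(s_\gamma, H^1(u, \mathfrak{b}))$ strictly $<0$, while Lemma \ref{l1}(3) confines them to $R_s \cup \{0\}$; intersecting yields $R_s^-$.

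For the left-hand term $H^1(s_\gamma, H^0(u, \mathfrak{b}))$, I would decompose $H^0(u, \mathfrak{b})$ into indecomposable $B_\gamma$-summands $V' \otimes \mathbb{C}_\lambda$ by Lemma \ref{lemma2.4}. Since the weights of $\mathfrak{b}$ lie in $R^- \cup \{0\}$, Lemma \ref{l1} implies the same for $H^0(u, \mathfrak{b})$, and hence for each summand. By Lemma \ref{lemma2.3}, only summands with $\langle \lambda, \gamma \rangle \leq -2$ contribute, and for each such summand $H^1(s_\gamma, V' \otimes \mathbb{C}_\lambda) \simeq V' \otimes H^0(s_\gamma, \mathbb{C}_{s_\gamma \cdot \lambda})$. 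A Clebsch--Gordan calculation identifies the weights of this $H^1$ as an $SL_2$-string that, relative to the weights of $V' \otimes \mathbb{C}_\lambda$, extends by $|\langle \lambda, \gamma \rangle| - 1$ additional weights in the $+\gamma$ direction above $\chi + \lambda$ while omitting the lowest original weight $\chi + \lambda - \langle \chi, \gamma \rangle \gamma$ (where $\chi$ is the highest weight of $V'$).

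The main obstacle is the final verification: showing that every weight in this extended string is a negative short root, given only that the original weights lie in $R^- \cup \{0\}$. This reduces to a root-system claim about $\gamma$-strings attached to the weight diagram of $\mathfrak{b}$: for every contributing indecomposable $B_\gamma$-summand of $H^0(u, \mathfrak{b})$, the new weights $\chi + \lambda + k\gamma$ with $k = 1, \ldots, |\langle \lambda, \gamma \rangle| - 1$ and the surviving original weights must all lie in $R_s^-$. This is precisely where the short/long root dichotomy in non-simply-laced types enters, and where the appearance of short roots in the conclusion is forced. Once this root-system verification is carried out for every contributing summand, both flanking terms of \emph{SES} have weights in $R_s^-$, completing the induction.
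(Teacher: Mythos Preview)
Your overall architecture matches the paper's: induct on $l(w)$, apply \emph{SES}, handle the right-hand term via Lemma~\ref{l1}, and analyze the left-hand term by decomposing $H^0(u,\mathfrak{b})$ into indecomposable $B_\gamma$-summands using Lemma~\ref{lemma2.4} and Lemma~\ref{lemma2.3}. But the step you flag as ``the main obstacle'' is a genuine gap, not just a routine verification.

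You say the verification proceeds ``given only that the original weights lie in $R^- \cup \{0\}$''. That is not enough. Take the potential summand $V_1 = \mathbb{C}_{-\gamma}$: its only weight $-\gamma$ lies in $R^-$, and $\langle -\gamma,\gamma\rangle = -2$, so it contributes to $H^1$. But $H^1(s_\gamma,\mathbb{C}_{-\gamma}) \simeq H^0(s_\gamma, s_\gamma\cdot(-\gamma)) = H^0(s_\gamma,0) = \mathbb{C}_0$, producing the weight $0 \notin R_s^-$. So no ``root-system claim about $\gamma$-strings attached to the weight diagram of $\mathfrak{b}$'' can succeed from weight data alone; you must rule out $\mathbb{C}_{-\gamma}$ as a summand, and that requires module-structure information, not just weights.

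The paper supplies exactly this missing ingredient: the evaluation map $ev\colon H^0(u,\mathfrak{b}) \to \mathfrak{b}$ is \emph{injective} (this is \cite[Lemma~2.6]{Ka4}), so $H^0(u,\mathfrak{b})$ sits inside $\mathfrak{b}$ as a $B_\gamma$-submodule. Inside $\mathfrak{b}$ the indecomposable $B_\gamma$-piece containing $\mathfrak{g}_{-\gamma}$ is $\mathbb{C}\,h_\gamma \oplus \mathbb{C}_{-\gamma} \simeq V\otimes\mathbb{C}_{-\omega_\gamma}$ with $V$ the standard $2$-dimensional $\widetilde{L}_\gamma$-module; since $\langle -\omega_\gamma,\gamma\rangle = -1$, this summand has $H^1 = 0$ by Lemma~\ref{lemma2.2}(4). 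Hence every contributing summand has lowest weight $\mu_1 \in R^-\setminus\{-\gamma\}$, so $\langle \mu_1,\gamma\rangle \in \{-2,-3\}$ by the root-string bound, and the $H^1$ weights $\mu_1+\gamma$ (and $\mu_1+2\gamma$ when $\langle\mu_1,\gamma\rangle=-3$) are negative roots satisfying $|\langle\,\cdot\,,\gamma\rangle|\le 1$, hence short. Once you insert this injectivity step and the consequent exclusion of $\mu_1=-\gamma$, your outline becomes the paper's proof.
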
 
\begin{proof}
 If $l(w)=0$, we are done. Otherwise, choose $\gamma \in S$ such that $l(s_{\gamma}w)=l(w)-1$.
Let $u=s_{\gamma}w$. Then by {\it SES},  we have 
$$0\longrightarrow H^1(s_{\gamma}, H^0(u, \mathfrak b))\longrightarrow H^1(w, \mathfrak b)
\longrightarrow H^0(s_{\gamma}, H^1(u, \mathfrak b))\longrightarrow 0$$
By induction on $l(w)$, $H^1(u, \mathfrak b)_{\mu}=0$ unless $\mu$ is a negative short root.
By Lemma \ref{l1} (4), $H^0(s_{\gamma}, H^1(u, \mathfrak b))_{\mu}=0$ unless $\mu$ is a negative
short root.

Now, we prove that $H^1(s_{\gamma}, H^0(u, \mathfrak b))_{\mu}=0$ unless $\mu$ is a negative
short root.
   Assume that $H^1(s_{\gamma}, H^0(u, \mathfrak b))_{\mu}\neq 0$. Then there exists an
   indecomposable $B_{\gamma}$-direct summand $V_1$ of $H^0(u, \mathfrak b)$ such that 
   $H^1(s_{\gamma}, V_1)_{\mu}\neq 0$.
   By Lemma \ref{lemma2.4}, $V_1=V'\otimes \mathbb C_{\mu'}$ for some irreducible 
   $\widetilde{L}_{\gamma}$-module $V'$ and for some character $\mu'$ of $\widetilde{B}_{\gamma}$.
  Since $H^1(s_{\gamma}, V_1)\neq 0$, by Lemma \ref{lemma2.3} we have $\langle \mu', 
  \gamma \rangle \leq -2$ and
  $H^1(s_{\gamma}, V_1)=V'\otimes H^0(s_{\gamma}, s_{\gamma} \cdot \mu')$. Then any
  weight $\mu''$ of $H^1(s_{\gamma}, V_1)$ is in the $\gamma$-string from $\mu_1+\gamma=
  \mu_1+\rho-s_{\gamma}(\rho)=s_{\gamma}(s_{\gamma}\cdot \mu_1)$ to 
  $s_{\gamma}\cdot \mu_1$, where $\mu_1$ is the lowest weight of $V_1$.
  
  Note that by \cite[Lemma 2.6]{Ka4}, the evaluation map $ev: H^0(u, \mathfrak b)\longrightarrow
  \mathfrak b$ is injective. Hence, if $H^0(u, \mathfrak b)_{-\gamma}\neq 0$
 then $\mathbb C. h_{\gamma}\oplus \mathbb C_{-\gamma}$ is an indecomposable $B_{\gamma}$-direct 
 summand of $H^0(u, \mathfrak b)$ (here
 $h_{\gamma}$ is a basis vector of the zero weight space of $sl_{2, \gamma}$).
  By Lemma \ref{lemma2.4}, we have $$\mathbb C.h_{\gamma}\oplus\mathbb C_{-\gamma}=
  V\otimes\mathbb{C}_{-\omega_{\gamma}}$$ where $V$ is the standard $2$-
dimensional representation of $\widetilde{L}_{\gamma}.$
By Lemma \ref{lemma2.3}, we have $$H^0(s_{\gamma}, V\otimes\mathbb{C}_{-\omega_{\gamma}} )= 
V\otimes H^0(s_{\gamma}, -\omega_{\gamma}).$$
    Since $\langle -\omega_{\gamma}, \gamma \rangle =-1$, by Lemma \ref{lemma2.2}, 
    $H^1(s_{\gamma}, \mathbb C.h_{\gamma}\oplus \mathbb C_{-\gamma} )=0$.

  Since $V_1$ is a $B$-submodule of $\mathfrak b$ and $H^1(s_{\gamma}, V')\neq 0$, by the above 
  arguments, we see that
  $V_1$ is not isomorphic to $\mathbb C.h_{\gamma}\oplus\mathbb C_{-\gamma}$. In particular, 
  we have $\mu_1\in R^-\setminus \{-\gamma\}$. 
  Let $\lambda$ be the lowest weight of $V'$. Then, we have $\mu_1=\lambda+\mu'$. 
  Since $\langle \lambda, \gamma \rangle\leq 0$ and $\langle \mu', \gamma \rangle \leq -2$, 
  we have   $\langle \mu_1, \gamma \rangle \leq -2$. Further by \cite[p.45, Section 9.4]{Hum1},
  we have $-3\leq \langle \mu_1, \gamma \rangle $.
  Then, the $\gamma$-string of $\mu$ is either $\mu+\gamma$ (if $\langle \mu_1 , \gamma \rangle=-2$)
   or $\mu+\gamma, \mu+2\gamma$ (if $\langle \mu_1 , \gamma \rangle=-3$). In particular, any weight $\mu''$ of $H^1(s_{\gamma}, V_1)$ satisfies $|\langle \mu'', \gamma \rangle |
  \leq 1 $ and  $\mu''$ is a negative short root. In particular, $\mu$ is a negative short root.
  
  Hence by the above short exact sequence, we conclude that $H^1(w, \mathfrak b)_{\mu}= 0$ unless $\mu$ 
  is a negative short root.
\end{proof}

Recall from Section 2 that $h_{\alpha_{i_{r}}}$ is a basis vector of the zero weight space of 
 $sl_{2, \alpha_{i_{r}}}$.

 \begin{lemma}\label{1}  Let $w \in W$ and fix a reduced expression  
 $w=s_{{i_1}}s_{{i_2}}\cdots s_{{i_r}}$. Then,
 \begin{enumerate} 
 \item
  If there is a $1\leq j < r-1$ such that  $\alpha_{i_j}=\alpha_{i_r}$, then we have 
  $H^0(w, \alpha_{i_r})_0=0$.  
\item If $\alpha_{i_j}\neq \alpha_{i_r}$ for all $1\leq j <r-1$, then $\mathbb C.h_{\alpha_{i_r}}\oplus \mathbb C_{-\alpha_{i_r}}$
is a $B_{\alpha_{i_r}}$-submodule of $H^0(w, \alpha_{i_r})$, and $H^0(w, \alpha_{i_r})_0= \mathbb C.h_{\alpha_{i_r}}$.
In particular, $dim(H^0(w, \alpha_{i_r})_0)=1$.
 
 \end{enumerate}
 \end{lemma}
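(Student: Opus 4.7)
I argue by induction on $r = l(w)$, writing $\gamma := \alpha_{i_r}$. The base case $r=1$ is immediate from Corollary \ref{commuting}: $H^0(s_{i_1},\gamma)\simeq sl_{2,\gamma}$, whose zero-weight space is $\mathbb{C}.h_\gamma$ and which contains $\mathbb{C}.h_\gamma\oplus\mathbb{C}_{-\gamma}$ as a $B$-submodule (hence $B_\gamma$-submodule), identified via the composition series of Lemma \ref{dem1}(3); here condition (1) is vacuous.

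For the inductive step $r\ge 2$, set $v := s_{i_2}\cdots s_{i_r}$ and apply \textit{SES}(1) to get
\[
H^0(w,\gamma)\;\simeq\;H^0(s_{i_1},M), \qquad M := H^0(v,\gamma).
\]
I decompose $M$ into indecomposable $B_{\alpha_{i_1}}$-summands $\bigoplus_k V_k\otimes\mathbb{C}_{\mu_k}$ using Lemma \ref{lemma2.4}, and evaluate $H^0(s_{i_1},-)$ on each summand via Lemma \ref{lemma2.3}. A summand contributes to weight zero of $H^0(s_{i_1},M)$ only if its highest weight $\lambda_k+\mu_k$ is a nonnegative integer multiple of $\alpha_{i_1}$. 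Combining this with the weight bounds $\lambda_k+\mu_k\le\gamma$ and $\lambda_k+\mu_k\in R\cup\{0\}$ from Lemma \ref{l1}(1)--(2), and the fact that nonzero integer multiples of a simple root which lie in $R$ are just $\pm\alpha_{i_1}$, forces $\lambda_k+\mu_k\in\{0,\alpha_{i_1}\}$, with the second option possible only when $\alpha_{i_1}=\gamma$.

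The decisive mechanism, borrowed from the proof of Lemma \ref{l2}, is that an indecomposable $B_{\alpha_{i_1}}$-summand isomorphic to $V\otimes\mathbb{C}_{-\omega_{\alpha_{i_1}}}$ (the standard $2$-dimensional $\widetilde L_{\alpha_{i_1}}$-module twisted by $-\omega_{\alpha_{i_1}}$, with weights $\{0,-\alpha_{i_1}\}$) is annihilated under $H^0(s_{\alpha_{i_1}},-)$ by Lemma \ref{lemma2.3}(4), since $\langle-\omega_{\alpha_{i_1}},\alpha_{i_1}\rangle=-1$. Applied summand-by-summand, together with the inductive hypothesis on $v$, this yields both conclusions: in Case (1) every potential zero-weight contribution comes from a summand of this type and is wiped out, giving $H^0(w,\gamma)_0=0$; in Case (2) the summand of $M$ containing the zero-weight vector $h_\gamma$ must be the trivial $\mathbb{C}_0$, so $h_\gamma$ survives into $H^0(s_{i_1},M)$ and the $B_\gamma$-submodule $\mathbb{C}.h_\gamma\oplus\mathbb{C}_{-\gamma}$ is transported across.

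The main technical obstacle is establishing in Case (2) that the $B_{\alpha_{i_1}}$-summand of $M$ containing $h_\gamma$ is indeed the trivial $\mathbb{C}_0$, rather than a $2$-dimensional extension with weights $\{0,-\alpha_{i_1}\}$; equivalently, that $x_{-\alpha_{i_1}}\cdot h_\gamma=0$ in $M$. This requires a finer inductive tracking of the $B_\alpha$-submodule structure of $H^0(u,sl_{2,\gamma})$ as $u$ ranges over the initial subwords, exploiting that in Case (2) no simple reflection $s_\gamma$ appears in $w$ before position $r$ together with the observation that, for $\alpha\neq\gamma$, the Lie algebra of the unipotent radical of $P_\alpha$ acts trivially on $sl_{2,\gamma}$ under the $B$-structure on $H^0(s_\gamma,\gamma)$; once this is in place, the case analysis above closes the induction.
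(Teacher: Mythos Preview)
Your inductive strategy (peeling $s_{i_1}$ from the left and analyzing the $B_{\alpha_{i_1}}$-summand containing $h_\gamma$) is genuinely different from the paper's route, which instead locates a critical position in the word---the last $j$ with $\alpha_{i_j}=\gamma$ in case~(1), the last $k$ with $\langle\alpha_{i_k},\gamma\rangle\neq 0$ in case~(2)---computes $H^0$ explicitly there, and then pushes through the remaining reflections using Lemma~\ref{l1}. The paper's approach has the advantage that after the explicit step one knows all nonzero weights are $\le -\gamma$, and this bound is preserved by every later $H^0(s_{i_j},-)$ since $\alpha_{i_j}\neq\gamma$; it is precisely this weight bound that forces the $B_{\alpha_{i_j}}$-summand containing $h_\gamma$ to be one-dimensional.

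Your sketch, however, has a real gap at the point you yourself flag as the ``main technical obstacle''. The stated observation---that for $\alpha\neq\gamma$ the unipotent radical of $P_\alpha$ acts trivially on $sl_{2,\gamma}$---is false: $\mathfrak g_{-\gamma}$ lies in the nilradical of $\mathfrak p_\alpha$ whenever $\alpha\neq\gamma$, and $x_{-\gamma}$ acts nontrivially on $sl_{2,\gamma}$. Even the (true) variant that the unipotent radical of $P_\gamma$ acts trivially on $sl_{2,\gamma}$ does not suffice, because this triviality is not inherited by the larger module $M=H^0(v,\gamma)$: once a non-commuting reflection has been applied, $M$ acquires weights such as $-\gamma-\alpha_{i_k}$ on which $x_{-\alpha_{i_k}}$ acts nontrivially. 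What you actually need to carry through the induction is the stronger statement that every nonzero weight of $M$ is $\le -\gamma$; then $-\alpha_{i_1}$ (with $\alpha_{i_1}\neq\gamma$) cannot be a weight of $M$, and $x_{-\alpha_{i_1}}\cdot h_\gamma=0$ follows. But proving this auxiliary bound amounts to redoing the paper's direct computation. There is also a smaller loose end in case~(1) when $\alpha_{i_1}=\gamma$: you must rule out a summand with highest weight $\gamma$, i.e.\ show $M_\gamma=0$; this follows from Corollary~\ref{cor1} (logically independent of the present lemma) together with the observation that $\langle\gamma,\alpha_{i_l}\rangle=0$ for all $2\le l\le r-1$ would contradict reducedness of $w$, but you do not address it.
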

 \begin{proof} Proof of (1): If there is a $1\leq j < r-1$ such that $\alpha_{i_j}=\alpha_{i_r}$, without loss of 
 generality we may assume that
there is no $k$ such that $j<k<r-1$ and $\alpha_{i_k}=\alpha_{i_r}$. Since  $w=s_{i_1}s_
{i_2}\cdots s_{i_r}$ is a reduced expression, there exists a $j< j' \leq r-1$ 
such that $\langle\alpha_{i_r}, \alpha_{i_{j'}}\rangle\leq -1$ and 
$\langle\alpha_{i_r}, \alpha_{i_{k}}\rangle =0$ for every $k$ such that $j'<k<r$.  
  By Corollary \ref{commuting}, we have the following isomorphism of $B$-modules:
$$ H^0(s_{i_{j'+1}}\cdots s_{i_r}, \alpha_{i_r})\simeq H^0(s_{i_r},
\alpha_{i_r})\simeq sl_{2, \alpha_{i_r}}.$$

By {\it SES}, we have  $H^0(s_{i_{j'}}\cdots s_{i_r}, \alpha_{i_r}) \simeq  
H^0(s_{i_{j'}}, H^0(s_{i_{j'+1}}\cdots s_{i_r}, \alpha_{i_r}))$ as 
$B$-modules.
 
 Then,
 $$H^0(s_{i_{j'}}\cdots s_{i_r}, \alpha_{i_r}) \simeq H^0(s_{i_{j'}}, 
 H^0(s_{i_{r}},\alpha_{i_r}))\simeq \mathbb C.h_{\alpha_{i_r}}\oplus\mathbb
 C_{-\alpha_{i_r}} \oplus (\bigoplus_{m=1}^{-\langle \alpha_{i_r}, \alpha_{i_{j'}}\rangle}\mathbb 
 C_{-\alpha_{i_r}-m\alpha_{i_{j'}}}).$$
 
 Since $\langle \alpha_{i_r}, \alpha_{i_k} \rangle \leq 0$ for every $j+1\leq k \leq j'-1$, 
 we conclude that 
 the indecomposable $B_{\alpha_{i_r}}$-summand $\mathbb C.h_{\alpha_{i_r}}\oplus \mathbb 
 C_{-\alpha_{i_r}}$
 is in the image of the evaluation map
 $$ev: H^0(s_{i_{j+1}}\cdots s_{i_{j'-1}}, H^0(s_{i_{j'}}\cdots s_{i_r}, 
 \alpha_{i_r}))\longrightarrow H^0(s_{i_{j'}}\cdots s_{i_r}, \alpha_{i_r}).$$
 Since $H^0(s_{i_{j+1}}\cdots s_{i_r}, \alpha_{i_r})\simeq H^0(s_{i_{j+1}}
 \cdots s_{i_{j'-1}}, H^0(s_{i_{j'}}\cdots s_{i_r}, \alpha_{i_r}))$, 
 the indecomposable $B_{\alpha_{i_r}}$-module
 $\mathbb C.h_{\alpha_{i_r}}\oplus \mathbb C_{-\alpha_{i_r}}$ is a direct summand of 
 $H^0(s_{i_{j+1}}\cdots s_{i_r}, \alpha_{i_r})$.
  By similar arguments as in the proof of Lemma \ref{l2} and using Lemma \ref{lemma2.4}, 
  we have $H^0(s_{i_j}, \mathbb C.h_{\alpha_{i_r}}\oplus\mathbb C_{-\alpha_{i_r}})=0$.

 Now, let $u_1=s_{i_1}\cdots s_{i_{j-1}}$ and $u_{2}=s_{i_j}\cdots 
 s_{i_r}$.
 From the above arguments, we see that $H^0(u_2, \alpha_{i_r})_{\mu}=0$ unless $\mu<-\alpha_{i_r}$
 and $\mu\in R$.
 By  Lemma \ref{l1}, if $H^0(u_1, H^0(u_2, \alpha_{i_r}))_{\mu}\neq 0$ then $\mu<-\alpha_{i_r}$ 
 and $\mu \in R$.
Hence, the zero weight space of $H^0(w, \alpha_{i_r})$ is zero.

Proof of (2): Proof is similar to the proof of (1), for the completeness we will give the proof.\\
If $\langle \alpha_{i_j}, \alpha_{i_r} \rangle=0$ for every $1\leq j\leq r-1$, then by Corollary 
\ref{commuting}, we have $H^0(w, \alpha_{i_r})=sl_{2,\alpha_{i_r}}$. Hence, (2) holds in this case.

Otherwise, there exists $1\leq j\leq r-1$ such that $\langle \alpha_{i_j}, \alpha_{i_r} \rangle\neq 0$.
Let $1\leq k \leq r-1$ be the largest integer such that $\langle \alpha_{i_k}, \alpha_{i_r} \rangle\neq 0$.
Then by {\it SES} and Corollary \ref{commuting}, we have 
$$H^0(w, \alpha_{i_r})\simeq H^0(s_{i_1}s_{i_2}\cdots
s_{i_k}, H^0(s_{i_{k+1}}\cdots s_{i_{r}}, \alpha_{i_r}))\simeq H^0(s_{i_1}s_{i_2}\cdots
s_{i_k},sl_{2,\alpha_{i_r}}).$$
Since $\langle \alpha_{i_r}, \alpha_{i_k} \rangle \leq -1$, we have $$ H^0(w, \alpha_{i_r})\simeq H^0(s_{i_1}s_{i_2}\cdots
s_{i_{k-1}},  \mathbb C.h_{\alpha_{i_r}}\oplus 
 \bigoplus_{m=0}^{-\langle \alpha_{i_k}, \alpha_{i_r} \rangle}
\mathbb C_{-\alpha_{i_r}-m\alpha_{i_k}}).$$
Since $\alpha_{i_j}\neq \alpha_{i_r}$ for all $1\leq j <r-1$, we see that $\mathbb C.h_{\alpha_{i_r}} \oplus \mathbb C_{-\alpha_{i_r}}$ is an 
indecomposable $B_{\alpha_{i_r}}$-submodule of $H^0(w, \alpha_{i_r})$.
Further, $H^0(w, \alpha_{i_r})_0= \mathbb C.h_{\alpha_{i_r}}$ and so $dim(H^0(w, \alpha_{i_r})_0)=1$. This completes the proof of the lemma.
\end{proof}

Now onwards we denote by  $M_{\geq 0}$ the semi subgroup of $Hom_{\mathbb R}(\mathfrak 
h_{\mathbb R}, \mathbb R)$
generated by  the set $S$ of all simple roots .

\begin{lemma} 
\label{lemma1}
Let $w\in W$.
 Let $\mu\in  M_{\geq 0}\setminus \{0\}$ and let $\alpha \in S$. Then, we have 
 \begin{enumerate} 
 \item  If $H^0(w, \alpha)_{\alpha}\neq 0$, then $dim(H^0(w, \alpha)_{\alpha})=1$.
 
 \item
  $H^0(w, \alpha)_{\mu}\neq 0$ if and only if $\mu=\alpha$ and the evaluation map 
  $ev: H^0(w, \alpha) \longrightarrow \mathbb C_{\alpha} $ is surjective.
 \end{enumerate}
\end{lemma}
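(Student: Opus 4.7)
The plan is to prove (1) and (2) together by induction on $l(w)$. The base case $l(w)=0$ is immediate: $H^0(e,\alpha)=\mathbb{C}_\alpha$, so the only positive weight is $\alpha$, it has dimension one, and the evaluation is the identity. For the inductive step, pick a simple root $\gamma$ with $l(s_\gamma w)=l(w)-1$, set $u=s_\gamma w$, and write $V:=H^0(u,\alpha)$. By {\it SES} (1), $H^0(w,\alpha)\cong H^0(s_\gamma,V)$ as $B$-modules. Since the base point of $Z(w,\underline i)$ maps under the $\mathbb{P}^1$-fibration to $B/B\in P_\gamma/B$ and sits at the base point of the fibre $Z(u,\underline i')$, the evaluation factors as $ev_w = ev_u \circ ev_{base}$, where $ev_{base}:H^0(s_\gamma,V)\to V$ is the evaluation of the pushforward bundle at $B/B$ and $ev_u:V\to\mathbb{C}_\alpha$ is the evaluation at the base point of the fibre.

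The weight restriction in (2) follows quickly from Lemma \ref{l1}: by part (1) with $\lambda_0=\alpha$ and by part (2), every weight $\mu$ of $H^0(w,\alpha)$ satisfies $\mu\leq\alpha$ and $\mu\in R\cup\{0\}$. A positive root $\mu\leq\alpha_j$ for a simple root $\alpha_j$ must equal $\alpha_j$, so the only element of $M_{\geq 0}\setminus\{0\}$ that can occur as a weight of $H^0(w,\alpha)$ is $\alpha$ itself.

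For (1), decompose $V=\bigoplus_j V'_j$ into indecomposable $B_\gamma$-summands; by Lemma \ref{lemma2.4}, each $V'_j=V''_j\otimes\mathbb{C}_{\mu'_j}$ has highest weight $\lambda_j=\mu'_j+\mu''_j$ with all weight spaces one-dimensional, and by Lemma \ref{l1}(1) satisfies $\lambda_j\leq\alpha$. Since the weights of $V'_j$ are $\lambda_j-k\gamma$ with $k\geq 0$ and $\lambda_j\leq\alpha$, the $\alpha$-weight of $V'_j$ is nonzero only if $\lambda_j=\alpha$ (forcing $k=0$), in which case it is one-dimensional. A parallel count on $H^0(s_\gamma,V'_j)=V''_j\otimes H^0(s_\gamma,\mu'_j)$ shows the $\alpha$-weight contribution from $V'_j$ is at most one-dimensional and again requires $\lambda_j=\alpha$. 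The inductive hypothesis gives $\dim V_\alpha\leq 1$, so at most one summand $V'$ satisfies $\lambda_j=\alpha$. Hence $\dim H^0(w,\alpha)_\alpha\leq 1$, proving (1).

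For the surjectivity of $ev_w$ when $H^0(w,\alpha)_\alpha\neq 0$, the unique summand $V'=V''\otimes\mathbb{C}_{\mu'}$ with $\mu''+\mu'=\alpha$ must satisfy $\langle\mu',\gamma\rangle\geq 0$, for otherwise $H^0(s_\gamma,V')=0$ by Lemma \ref{lemma2.3}(3)-(4) and no summand contributes to the $\alpha$-weight. On this summand $ev_{base}$ acts as $\mathrm{id}_{V''}\otimes ev_{\mu'}$, which is surjective by Lemma \ref{dem1}. By the inductive hypothesis, $ev_u:V\to\mathbb{C}_\alpha$ is surjective; since $V_\alpha\subset V'$, the restriction $ev_u|_{V'}$ remains surjective. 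Composing yields surjectivity of $ev_w|_{H^0(s_\gamma,V')}$ and hence of $ev_w$. The converse direction of (2) is immediate by $B$-equivariance: any section mapping to a nonzero vector of $\mathbb{C}_\alpha$ must have weight $\alpha$. The main technical obstacle is the Krull--Schmidt bookkeeping that isolates a single indecomposable $B_\gamma$-summand carrying the $\alpha$-weight and controls its contribution to both the dimension bound and the evaluation map.
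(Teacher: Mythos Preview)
Your proof is correct. For part (2) you follow essentially the same inductive scheme as the paper: pick $\gamma$ with $l(s_\gamma w)=l(w)-1$, pass to $H^0(s_\gamma, H^0(u,\alpha))$, locate an indecomposable $B_\gamma$-summand whose highest weight must equal $\alpha$, and propagate the surjectivity of the evaluation through the factorization $ev_w=ev_u\circ ev_{base}$.

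For part (1), however, you take a genuinely different route. The paper does \emph{not} prove (1) by induction on $l(w)$; instead it fixes $w_1$ of minimal length with $w_1(\alpha)$ dominant, and by induction on $l(w_1)$ embeds $H^0(w,\alpha)$ as a $B$-submodule of the Demazure-type module $H^0(ww_1^{-1}, w_1(\alpha))$, which in turn is a quotient of the Weyl module $H^0(w_0,w_1(\alpha))$ where the weight $\alpha$ has multiplicity one. This step requires an external input (a criterion from \cite{Ka1} for the nonvanishing of $H^0(w,\alpha)_\alpha$ to force $l(ws_\gamma)=l(w)+1$). Your argument bypasses this entirely: you use Lemma~\ref{l1}(1) to bound all weights of $H^0(u,\alpha)$ by $\alpha$, observe that an indecomposable $B_\gamma$-summand contributes to the $\alpha$-weight space of $H^0(s_\gamma,-)$ only if its highest weight is exactly $\alpha$, and then invoke the inductive bound $\dim H^0(u,\alpha)_\alpha\leq 1$ to see that at most one summand can do so. This is more elementary and self-contained, and has the pleasant side effect of proving (1) and (2) in a single induction rather than using (1) as a black box inside the proof of (2). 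The paper's embedding argument, on the other hand, makes the connection to Demazure modules more transparent and would generalize more readily to weights other than simple roots.
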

 \begin{proof} Proof of (1): 
 Let $w_1\in W$ be an element of minimal length such that $w_1(\alpha)$ is 
a dominant weight.
Note that if $l(w_1)=0$, then $\alpha$ is dominant. In particular, $G$ is of rank $1$ and 
$w\in \{id, s_{\alpha}\}$. Hence  $dim(H^0(w, \alpha)_{\alpha})=1$.
Otherwise, there exists a $\gamma \in S$ such that $l(w_1s_{\gamma})=l(w_1)-1$ and 
$\langle \alpha, \gamma \rangle < 0$. Hence by Lemma \ref{dem1}, $\mathbb C_{\alpha}$ is a 
$B$-submodule of $H^0(s_{\gamma}, s_{\gamma}(\alpha))$.
Then $H^0(w, \alpha)$ is a $B$-submodule of $H^0(w, H^0(s_{\gamma}, s_{\gamma}(\alpha)))$. 
Since $H^0(w, \alpha)_{\alpha}\neq 0$, by \cite[p.110, Theorem 3.3]{Ka1} (see also \cite{Deb} 
and \cite{Polo})
we have $l(ws_{\gamma})=l(w)+1$ (Note that since $\langle \alpha, \gamma \rangle < 0$, the 
regularity of $\lambda$ as in \cite[p.110, Theorem 3.3]{Ka1} does not play a role).
By  Lemma \ref{lemma2.2},
we have $$H^0(ws_{\gamma}, s_{\gamma}(\alpha))= H^0(w, H^0(s_{\gamma}, s_{\gamma}(\alpha))).$$
Hence $H^0(w, \alpha)$ is a $B$-submodule of $H^0(ws_{\gamma}, s_{\gamma}(\alpha))$.
By induction on $l(w_1)$,
$H^0(w, \alpha)$ is a $B$-submodule of $H^0(ww_1^{-1}, w_1(\alpha))$. 
Since $w_1(\alpha)$ is dominant, $H^0(ww_1^{-1}, w_1(\alpha))$
is a quotient of the $B$-module $H^0(w_0, w_1(\alpha))$. Further, since the multiplicity
of the weight $\alpha$ in $H^0(w_0, w_1(\alpha))$ is 1, 
 the multiplicity of the weight $\alpha$ in $H^0(ww_1^{-1}, w_1(\alpha))$ is at most 1. 
 Hence, we conclude that $dim(H^0(w, \alpha)_{\alpha})=1$.

Proof of (2): 

 Assume that $H^0(w, \alpha)_{\mu}\neq 0$.
If $l(w)=0$, there is nothing to prove.
Assume $l(w)>0$. Therefore, we can choose a $\gamma \in S$ such that $l(s_{\gamma}w)=l(w)-1$. 
Let $u=s_{\gamma}w$.
By {\it SES}, we have $H^0(w,\alpha)=H^0(s_{\gamma}, H^0(u, \alpha))$.

Since  $H^0(w, \alpha)_{\mu}\neq 0$, there exists an indecomposable $B_{\gamma}$-summand $V$ of 
$H^0(u, \alpha)$ such that $H^0(s_{\gamma}, V)_{\mu}\neq 0$.
Let $\mu'$ be the highest weight of $V$. By Lemma \ref{lemma2.4},
we have $V=V'\otimes \mathbb C_{\lambda}$ for some character $\lambda$ of $\widetilde{B}_{\gamma}$ and 
for some irreducible $\widetilde{L}_{\gamma}$-module $V'$. Let $\lambda_{1}$ be a highest weight of $V'$. By  
similar arguments as in the proof of Lemma \ref{l1}, we have  $\lambda_1+\lambda =\mu'$, 
and $\mu=\mu'-a\gamma$ where $0\leq a \leq \langle \mu', \gamma \rangle$.
Therefore, $\mu'=\mu+a \gamma$ for some $a\in \mathbb Z_{\geq 0}$ 
and $H^0(u, \alpha)_{\mu'}\neq 0$. 
By induction on $l(w)$,  $\mu'= \alpha $ and  the evaluation map 
$ev: H^0(u, \alpha) \longrightarrow \mathbb C_{\alpha} $ is surjective.
By (1), we see that $ev: H^0(u, \alpha)_{\alpha} \longrightarrow \mathbb C_{\alpha} $ is an isomorphism.
Since $\mu\in   M_{\geq 0}\setminus \{0\}$ and $\mu'=\alpha$, we have $a=0$ and hence 
$\mu= \alpha $. By the above arguments, the restriction of the evaluation map 
$ev:H^0(w, \alpha)_{\alpha} \longrightarrow H^0(u, \alpha)_{\alpha}$ is surjective.
Hence,  the evaluation map $ev:H^0(w, \alpha) \longrightarrow \mathbb C_{\alpha} $ is surjective.

  The other implication is straight forward.
\end{proof}

\begin{corollary}\label{cor1}

Let $w\in W$ and fix a reduced expression $w=s_{{i_1}}s_{{i_2}}\cdots s_{{i_r}}$. 
Let $\mu \in M_{\geq 0}\setminus \{0\}$. Then, we have 
\begin{enumerate} 
 \item
  $H^0(w, \alpha_{i_r})_{\mu} \neq 0$ if and only if $\mu=\alpha_{i_r}$ and 
  $\langle \alpha_{i_r}, \alpha_{i_k} 
\rangle = 0$ for $k=1,2,\cdots, r-1.$ 
\item In such a case, the evaluation map $ev: H^0(w, \alpha_{i_r})\longrightarrow
sl_{2, \alpha_{i_r}}$ is an isomorphism.
\end{enumerate}
\end{corollary}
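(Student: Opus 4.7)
The plan is to assemble three earlier results: Lemma~\ref{lemma1}(2) to pin down the weight $\mu$, Corollary~\ref{commuting} for one direction of (1) and all of (2), and the finer weight bounds that appear inside the proofs of Lemma~\ref{1}(1) and Lemma~\ref{1}(2) to close off the contrapositive of (1).

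First I would handle the ``if'' direction of (1) together with part (2). Assuming $\langle \alpha_{i_r}, \alpha_{i_k}\rangle = 0$ for every $k=1,\ldots,r-1$, Corollary~\ref{commuting} supplies an isomorphism $H^0(w, \alpha_{i_r}) \simeq sl_{2,\alpha_{i_r}}$ that is induced by the natural restriction/evaluation map. Since $\alpha_{i_r}$ is a weight of $sl_{2,\alpha_{i_r}}$ (with multiplicity one), choosing $\mu = \alpha_{i_r}$ produces a nonzero weight space, and the same isomorphism proves (2) immediately.

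Next I would tackle the ``only if'' direction of (1). Given $H^0(w, \alpha_{i_r})_\mu \neq 0$ with $\mu \in M_{\geq 0}\setminus\{0\}$, apply Lemma~\ref{lemma1}(2) with $\alpha = \alpha_{i_r}$ to force $\mu = \alpha_{i_r}$. To finish I argue by contradiction, splitting into the two cases that parallel Lemma~\ref{1}. In Case~A there exists $j<r-1$ with $\alpha_{i_j}=\alpha_{i_r}$; the argument in the proof of Lemma~\ref{1}(1) shows every weight of $H^0(w,\alpha_{i_r})$ lies strictly below $-\alpha_{i_r}$ in the partial order, so $\alpha_{i_r}$ cannot occur. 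In Case~B we have $\alpha_{i_j}\neq \alpha_{i_r}$ for $j<r-1$ but $\langle \alpha_{i_r}, \alpha_{i_k}\rangle\neq 0$ for some $k$; taking $k$ maximal with that property, the decomposition derived in the proof of Lemma~\ref{1}(2) exhibits
$$H^0(w, \alpha_{i_r}) \simeq H^0\Bigl(s_{i_1}\cdots s_{i_{k-1}},\; \mathbb C\,h_{\alpha_{i_r}}\oplus \bigoplus_{m=0}^{-\langle \alpha_{i_k},\alpha_{i_r}\rangle}\mathbb C_{-\alpha_{i_r}-m\alpha_{i_k}}\Bigr),$$
in which the $B$-module on the right has highest weight $0$. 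Lemma~\ref{l1}(1) then bounds every weight of the outer $H^0$ by $0$, again excluding $\alpha_{i_r}$. Both cases contradict $\mu = \alpha_{i_r}$, so all brackets $\langle \alpha_{i_r}, \alpha_{i_k}\rangle$ must vanish.

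The main point of care is not the logic but the bookkeeping: I need to quote the \emph{proofs} of Lemma~\ref{1}(1) and (2), not merely their statements, because what is required here is the sharper upper-bound information on the weights that those arguments produce en route (the statements themselves only record the zero weight space). Once this is made explicit, the corollary is a clean packaging of those intermediate weight estimates together with Lemma~\ref{lemma1}(2) and Corollary~\ref{commuting}.
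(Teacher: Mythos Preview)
Your proof is correct, but it follows a genuinely different route from the paper's. The paper argues the ``only if'' direction of (1) by \emph{induction on $l(w)$}: Lemma~\ref{lemma1}(2) gives not only $\mu=\alpha_{i_r}$ but also surjectivity of the evaluation map $H^0(w,\alpha_{i_r})\to\mathbb C_{\alpha_{i_r}}$; since this evaluation factors through $H^0(u,\alpha_{i_r})$ with $u=s_{i_2}\cdots s_{i_r}$, the inductive hypothesis yields $\langle\alpha_{i_r},\alpha_{i_k}\rangle=0$ for $k=2,\ldots,r-1$, and then a direct computation of $H^0(s_{i_1}s_{i_r},\alpha_{i_r})$ via Lemma~\ref{lemma2.3} forces $\langle\alpha_{i_1},\alpha_{i_r}\rangle=0$ as well. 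Your argument instead dispenses with induction and works by direct contradiction, harvesting the sharper weight bounds established \emph{inside} the proofs of Lemma~\ref{1}(1) and~(2) and combining them with Lemma~\ref{l1}(1). Your approach is shorter and avoids the inductive scaffolding, at the price you yourself identify: it quotes the internals of Lemma~\ref{1}'s proof rather than only statements of prior results. The paper's version is more self-contained in that sense, relying only on stated lemmas and a small explicit computation at the base of the induction.
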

\begin{proof} Proof of (1): Assume that $H^0(w, \alpha_{i_r})_{\mu} \neq 0$.
By Lemma \ref{lemma1}, we have $\mu =\alpha_{i_r}$ and the evaluation map  
$ev : H^0(w, \alpha_{i_r}) \longrightarrow \mathbb C_{\alpha_{i_r}}$ is surjective.
We now prove that  $\langle \alpha_{i_r}, \alpha_{i_k} \rangle = 0$ for $k=1,2,\cdots, r-1.$ 
Let $u=s_{{i_2}}s_{{i_3}}\cdots s_{{i_{r}}}$. Then, we have  $l(u)=l(w)-1$.
Since the evaluation map $ev: H^0(w, \alpha_{i_r})=H^0(s_{{i_1}}, H^0(u, \alpha_{i_r}))
\longrightarrow \mathbb C_{\alpha_{i_r}}$ is non zero, 
the evaluation map $ev: H^0(u, \alpha_{i_r})\longrightarrow \mathbb C_{\alpha_{i_r}} $ is non 
zero, because this evaluation map is the composition 
of the evaluation maps $H^0(s_{i_1}, H^0(u, \alpha))\longrightarrow H^0(u, \alpha)$ and
$H^0(u, \alpha)\longrightarrow \mathbb C_{\alpha}$.
By induction on $l(w)$, $\langle \alpha_{i_r}, \alpha_{i_k} \rangle = 0$ for $k=2,\cdots, r-1.$
Hence, $w=s_{{i_1}}s_{{i_r}}s_{{i_2}}\cdots s_{{i_{r-1}}}$ is also a reduced expression for $w$. 
In particular, $\alpha_{i_1}\neq \alpha_{i_{r}} $ and hence $\langle \alpha_{i_1}, \alpha_{i_r} 
\rangle \leq 0$. 
By Corollary \ref{commuting}, we have $H^0(w, \alpha_{i_r})=H^0(s_{{i_1}}s_{i_r}, \alpha_{i_r})$. 
Note that if $\langle \alpha_{i_1}, \alpha_{i_r} \rangle \leq -1$, by Lemma \ref{lemma2.3} we 
have $H^0(w, \alpha_{i_r})_{\alpha_{i_r}}= 0$, which is a contradiction.
Thus, we have $\langle \alpha_{i_1}, \alpha_{i_r} \rangle = 0$.
Hence $\langle \alpha_{i_r}, \alpha_{i_k} \rangle = 0$ for $k=1,2,\ldots, r-1$. 

The other implication follows from Corollary \ref{commuting}.

Assertion (2) follows from the fact that $H^0(s_{{i_r}}, \alpha_{i_r})$ is the $3$-dimensional 
cyclic $B$-submodule generated by a weight vector of weight $\alpha_{i_r}$.
\end{proof}


Let $\mathfrak p$ be a $B$-submodule of $\mathfrak g$ containing $\mathfrak b$.

  \begin{lemma} \label{lemma2}
  Let $w \in W$ and  let $\mu \in   M_{\geq 0}\setminus \{0\}$. 
  If $ H^0(w, \mathfrak g/ \mathfrak p)_{\mu} \neq 0$, then $\mu \in R^+$.
  \end{lemma}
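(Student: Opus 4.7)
The plan is to deduce this directly from Lemma \ref{l1}(2) applied to the $B$-module $V = \mathfrak g/\mathfrak p$. First I would identify the $T$-weights of $\mathfrak g/\mathfrak p$. Since $\mathfrak b = \mathfrak h \oplus \bigoplus_{\alpha \in R^-} \mathfrak g_\alpha$ (the set of roots of $B$ is $R^-$) and $\mathfrak p \supseteq \mathfrak b$, every weight of the quotient $\mathfrak g/\mathfrak p$ lies in $R^+$. In particular, $(\mathfrak g/\mathfrak p)_\nu = 0$ for every $\nu \in X(T) \setminus (R \cup \{0\})$, which is exactly the hypothesis needed to invoke Lemma \ref{l1}(2).

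Applying that lemma gives $H^0(w, \mathfrak g/\mathfrak p)_\nu = 0$ for every $\nu \in X(T) \setminus (R \cup \{0\})$. Now suppose $H^0(w, \mathfrak g/\mathfrak p)_\mu \neq 0$ for some $\mu \in M_{\geq 0} \setminus \{0\}$. Then $\mu \in R \cup \{0\}$, and since $\mu \neq 0$ we must have $\mu \in R$. Being a nonzero non-negative integral combination of simple roots, $\mu$ cannot be a negative root, so $\mu \in R^+$, as required. There is no real obstacle here; the entire content has been packaged into Lemma \ref{l1}(2), and the only thing to verify is the elementary observation about the weight support of $\mathfrak g/\mathfrak p$.
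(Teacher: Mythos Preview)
Your proof is correct. The paper takes a slightly different route: rather than invoking Lemma~\ref{l1}(2), it reruns the same inductive argument directly---choosing $\gamma\in S$ with $l(s_\gamma w)=l(w)-1$, passing to an indecomposable $B_\gamma$-summand $V$ of $H^0(s_\gamma w,\mathfrak g/\mathfrak p)$, and using the $\gamma$-string description of the weights of $H^0(s_\gamma,V)$ together with the induction hypothesis to see that the highest weight $\mu'$ of $V$ lies in $R^+$, hence $\mu=\mu'-a\gamma\in R\cup\{0\}$. Your observation that all of this is already packaged in Lemma~\ref{l1}(2) is exactly right; once one notes that every weight of $\mathfrak g/\mathfrak p$ lies in $R^+\subset R\cup\{0\}$, the conclusion is immediate. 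The two arguments are logically the same mechanism, but yours avoids repeating the induction.
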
  
\begin{proof} If $l(w)=0$, there is nothing to prove.
Assume that $l(w)>0$. Then, we can choose $\gamma \in S$ such that $l(s_{\gamma}w)=l(w)-1$. 
Let $u=s_{\gamma}w$.
By {\it SES}, we have $H^0(w,\mathfrak g/ \mathfrak p)=H^0(s_{\gamma}, H^0(u, \mathfrak g/ 
\mathfrak p))$.

Since  $H^0(w, \mathfrak g/ \mathfrak p)_{\mu}\neq 0$, there exists an indecomposable 
$B_{\gamma}$-summand $V$ of 
$H^0(u, \mathfrak g/ \mathfrak p)$ such that $H^0(s_{\gamma}, V)_{\mu}\neq 0$.
Let $\mu'$ be the highest weight of $V$.  
By the same arguments as in the proof of Lemma \ref{lemma1}, we have $\mu=\mu'-a\gamma$ 
where $0\leq a \leq \langle \mu', \gamma \rangle$.

 Since $l(u)=l(w)-1$ and $V_{\mu'}\neq 0$, by induction on $l(w)$,  $\mu'\in R^{+}$.
Hence $\mu'-j\gamma\in R\cup \{0\}$ for 
every $0\leq j\leq \langle \mu', \gamma \rangle$ (see \cite[p.45, Section 9.4]{Hum1}).
Since $\mu \in  M_{\geq 0}\setminus \{0\}$, we have $\mu \in R^+$. 
\end{proof}

\begin{proposition} \label{cor2} Let $w\in W$ and fix a reduced expression 
$w=s_{{i_1}}s_{{i_2}}\cdots s_{{i_r}}$.
 Fix $1\leq j \leq r-1$. If $\langle \alpha_{i_j}, \alpha_{i_k} \rangle = 0$ 
 for every $1\leq k < j$,  
 then the natural map
$ H^0(w, \mathfrak g/ \mathfrak b)_{\alpha_{i_j}} \longrightarrow H^0(w, \mathfrak g/
\mathfrak p)_{\alpha_{i_j}}$ 
is surjective.
\end{proposition}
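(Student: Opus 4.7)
The plan is to reduce the surjectivity assertion to a cohomology vanishing, and then establish that vanishing by combining the ascending 1-step construction with a careful weight analysis.

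First I reduce to the case $j = 1$. Since $\langle \alpha_{i_j}, \alpha_{i_k}\rangle = 0$ for all $1\le k < j$, the simple reflection $s_{i_j}$ commutes with each of $s_{i_1}, \ldots, s_{i_{j-1}}$, so
\[ w = s_{i_j}\, s_{i_1}\cdots s_{i_{j-1}}\, s_{i_{j+1}}\cdots s_{i_r} \]
is another reduced expression for $w$. Because $H^*(w,V)$ depends only on $w$ and not on the reduced expression, I may assume $j=1$ and set $\gamma := \alpha_{i_1}$, $u := s_{i_2}\cdots s_{i_r}$, so that $w = s_\gamma u$ with $l(u) = l(w) - 1$.

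Next, I apply $H^*(w,-)$ to the short exact sequence $0 \to \mathfrak p/\mathfrak b \to \mathfrak g/\mathfrak b \to \mathfrak g/\mathfrak p \to 0$ of $B$-modules. The connecting homomorphism produces the exact segment
\[ H^0(w, \mathfrak g/\mathfrak b)_\gamma \to H^0(w, \mathfrak g/\mathfrak p)_\gamma \to H^1(w, \mathfrak p/\mathfrak b)_\gamma, \]
so the surjectivity will follow once I establish $H^1(w, \mathfrak p/\mathfrak b)_\gamma = 0$. Applying the ascending 1-step construction (\textit{SES}) with the simple root $\gamma$ yields
\[ 0 \to H^1(s_\gamma, H^0(u, \mathfrak p/\mathfrak b)) \to H^1(w, \mathfrak p/\mathfrak b) \to H^0(s_\gamma, H^1(u, \mathfrak p/\mathfrak b)) \to 0, \]
and it suffices to verify that both end terms have vanishing $\gamma$-weight space.

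For the weight analysis, I filter $\mathfrak p/\mathfrak b$ by $B$-submodules with one-dimensional quotients $\mathbb C_\beta$ ($\beta \in R^+$, the positive roots whose root spaces lie in $\mathfrak p$), ordered by increasing height so that each step is $B$-stable. Since $H^j(u, \beta) = 0$ for $j \ge 2$ by Lemma \ref{vanishing}(1), the long exact sequences at each filtration step stay confined to $H^0$ and $H^1$. I then decompose $H^0(u, \mathfrak p/\mathfrak b)$ and $H^1(u, \mathfrak p/\mathfrak b)$ as $B_\gamma$-modules into indecomposable summands $V \otimes \mathbb C_\lambda$ via Lemma \ref{lemma2.4}, and apply the explicit $\gamma$-string description of $H^i(s_\gamma, V\otimes\mathbb C_\lambda)$ from Lemma \ref{lemma2.3}. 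A contribution to the $\gamma$-weight space forces the highest weight $\mu_0 + \lambda$ of such a summand to be a positive integer multiple of $\gamma$; since $\gamma$ is simple and $2\gamma$ is not a root, this, combined with Lemma \ref{l1} (which constrains the possible weights of $H^0(u,-)$), Lemma \ref{lemma1}, and Corollary \ref{cor1}, rules out any such contribution.

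The main obstacle will be the analysis of $H^0(s_\gamma, H^1(u, \mathfrak p/\mathfrak b))_\gamma$: whereas Lemma \ref{l2} sharply controls $H^1(u, \mathfrak b)$, we have no direct a priori bound on the positive-root weights of $H^1(u, \mathfrak p/\mathfrak b)$ (especially in the non-simply-laced case), so each step of the filtration must be handled carefully to exclude possible $\gamma$-weight contributions coming from non-vanishing $H^1(u, \beta)$.
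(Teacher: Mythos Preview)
Your reduction to $j=1$ is sound, and the reformulation ``surjectivity $\Leftrightarrow H^1(w,\mathfrak p/\mathfrak b)_\gamma=0$'' is correct (using $H^1(w,\mathfrak g/\mathfrak b)=0$ from Theorem~\ref{AS}). The \textit{SES} decomposition is also valid, and the first end term $H^1(s_\gamma, H^0(u,\mathfrak p/\mathfrak b))_\gamma$ can indeed be handled by the $\gamma$-string analysis you outline. The genuine gap is the second end term $H^0(s_\gamma, H^1(u,\mathfrak p/\mathfrak b))_\gamma$: you correctly flag it as ``the main obstacle'' but then give no argument. The lemmas you invoke (Lemma~\ref{l1}, Lemma~\ref{lemma1}, Corollary~\ref{cor1}) constrain weights of $H^0(u,-)$, not of $H^1(u,-)$, and your filtration by one-dimensional quotients $\mathbb C_\beta$ does not help, because in the non--simply-laced case $H^1(u,\beta)$ can carry the weight $\gamma$. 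Concretely, in type $C_2$ (with $\alpha_1$ short, so $\langle\alpha_2,\alpha_1\rangle=-2$) take $w=s_1s_2s_1$, $\gamma=\alpha_1$, $u=s_2s_1$, and $\mathfrak p=\mathfrak p_{\alpha_2}$, so $\mathfrak p/\mathfrak b=\mathbb C_{\alpha_2}$. Then $H^1(s_1,\alpha_2)=\mathbb C_{\alpha_1+\alpha_2}$ and $H^1(u,\alpha_2)\simeq H^0(s_2,\alpha_1+\alpha_2)$ has weights $\alpha_1+\alpha_2,\alpha_1$; the $B_{\alpha_1}$-summand $\mathbb C_{\alpha_1}$ yields $H^0(s_1,\mathbb C_{\alpha_1})$ with weight $\alpha_1$, so $H^0(s_\gamma,H^1(u,\mathfrak p/\mathfrak b))_\gamma\neq 0$. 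In fact $H^1(w,\mathfrak p/\mathfrak b)_\gamma=H^1(s_1s_2s_1,\alpha_2)_{\alpha_1}\neq 0$, so your whole strategy of establishing this $H^1$-vanishing cannot succeed for arbitrary $\mathfrak p$.

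The paper proceeds very differently: it never passes to $H^1$ at all, but argues by induction on $l(w)$ directly on the $H^0$ level, stripping off $s_{i_1}$ from the left and using Lemma~\ref{lemma2} to show that the evaluation maps $H^0(w,\mathfrak g/\mathfrak q)_{\alpha_{i_j}}\to H^0(s_{i_1}w,\mathfrak g/\mathfrak q)_{\alpha_{i_j}}$ are isomorphisms for $\mathfrak q\in\{\mathfrak b,\mathfrak p\}$, then invoking the inductive hypothesis for $s_{i_1}w$. Note, however, that the same $C_2$ example also exposes a delicacy in the paper's induction when $j=1$: for $u=s_{i_2}\cdots s_{i_r}=s_2s_1$ the weight $\alpha_{i_1}=\alpha_1$ does not arise from any admissible index $j'$ in the reduced expression of $u$, and indeed the map $g$ at weight $\alpha_1$ fails to be surjective for $\mathfrak p=\mathfrak p_{\alpha_2}$. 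The only application (Corollary~\ref{lemma3}) uses $\mathfrak p=\mathfrak p_{\alpha_{i_r}}$, and any attempt at a general $\mathfrak p$ should keep this example in mind.
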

\begin{proof}
If $l(w)=0$, there is nothing to prove.
Assume $l(w)>0$ and let $u=s_{{i_1}}w$. Then, we have $l(u)=l(w)-1$.
By {\it SES}, we have the evaluation map $$ev:H^0(w,\mathfrak g/ \mathfrak p)=H^0(s_{{i_1}}, 
H^0(u, \mathfrak g/ \mathfrak p)) \longrightarrow H^0(u, \mathfrak g/ \mathfrak p).$$
We denote the restriction  of the evaluation map $ev$ to $H^0(w,\mathfrak g/ \mathfrak p)_
{\alpha_{i_j}}$ by $ev_1$.

First we will prove that $ev_1$ is an isomorphism.

Let $v$ be a non zero vector in $H^0(w, \mathfrak g/ \mathfrak p)$ of weight $\alpha_{i_j}$. 
Let $H^0(u, \mathfrak g/ \mathfrak p))\simeq \bigoplus_{i=1}^{m}V_i$ be a decomposition as a sum of indecomposable 
$B_{\alpha_{i_1}}$-submodules.
Since $v\in H^0(s_{{i_1}}, \bigoplus_{i=1}^{m}V_i)=\bigoplus_{i=1}^{m} H^0(s_{{i_1}}, V_i)$, $v=\sum_{i=1}^{m}v_i$ where 
$v_i\in H^0(s_{{i_1}}, V_i)$ ($1\leq i\leq m$), it follows that the weight of $v_i$ is same as the weight of $v$.
Hence, without loss of generality, we may assume that there exists an 
indecomposable $B_{\alpha_{i_1}}$-summand $V$ of $H^0(u, \mathfrak g/ \mathfrak p)$  such that 
$v\in H^0(s_{{i_1}}, V)_{\alpha_{i_j}}$.
Let $\mu$ be the highest weight of $V$. By the arguments as in the proof of Lemma \ref{lemma1}, 
$\mu=\alpha_{i_j}+a \alpha_{i_1}$ for some $a\in \mathbb Z_{\geq 0}$.
Since $H^0(u, \mathfrak g/ \mathfrak p)_{\mu}\neq0$, by Lemma \ref{lemma2} we see that $\mu$ is 
a positive root. 
Since either $j=1$, or $\langle \alpha_{i_j}, \alpha_{i_1} \rangle = 0$, 
we have $a=0$. Hence $V=\mathbb C.v$. Thus, the map 
$ev_1:H^0(w,\mathfrak g/ \mathfrak p)_{\alpha_{i_j}}\longrightarrow 
H^0(u, \mathfrak g/ \mathfrak p)_{\alpha_{i_j}}$ 
is injective. To prove $ev_1$  is surjective, 
let $v{'}$ be a non zero vector in $H^0(u, \mathfrak g/ \mathfrak p)$ of weight $\alpha_{i_j}$. 
By similar arguments, we may assume that there exists an
indecomposable $B_{\alpha_{i_1}}$-summand $V{'}$ of 
$H^0(u, \mathfrak g/ \mathfrak p)$ containing $v{'}$. Let $\mu'$ be the highest weight of $V'$.
Then, by the arguments as in the proof of Lemma \ref{lemma1}, 
$\mu'=\alpha_{i_j}+a \alpha_{i_1}$ for some $a\in \mathbb Z_{\geq 0}$. By the similar arguments as above, we 
see that 
$V^{'}=\mathbb C.v^{'}$. Hence, we conclude that $v^{'}$ 
is in the image of $ev_1$.

In particular, the restriction $ev_2: H^0(w,\mathfrak g/ \mathfrak b)_{\alpha_{i_j}}
\longrightarrow H^0(u, \mathfrak g/ \mathfrak b)_{\alpha_{i_j}}$
of the evaluation map $H^0(w,\mathfrak g/ \mathfrak b)\longrightarrow H^0(u, \mathfrak g/
\mathfrak b)$ is an isomorphism.

Now, consider the following commutative diagram of $T$-modules:
\begin{center}

$\xymatrix{ 
H^0(w, \mathfrak g/ \mathfrak b)_{\alpha_{i_j}}\ar[d]^{\reflectbox{\rotatebox[origin=c]{90}{$\sim$}}}_{ev_2} \ar[r]^{f} &
H^0(w, \mathfrak g/ \mathfrak p)_{\alpha_{i_j}} \ar[d]^{\reflectbox{\rotatebox[origin=c]{90}{$\sim$}}}_{ev_1} 
\\ H^0(u, \mathfrak g/\mathfrak b)_{\alpha_{i_j}} \ar[r]^{g}  & H^0(u, \mathfrak g/ \mathfrak p )
_{\alpha_{i_j}} }
$

\end{center}

By the induction on $l(w)$,   $ g: H^0(u, \mathfrak g/ \mathfrak b)_{\alpha_{i_j}} 
\longrightarrow H^0(u, 
\mathfrak g/ \mathfrak p )_{\alpha_{i_j}}$ is surjective.
By the commutativity of the above diagram, it follows that  
  the natural map $$f: H^0(w, \mathfrak g/ \mathfrak b)_{\alpha_{i_j}}\longrightarrow 
  H^0(w, \mathfrak g/\mathfrak p)_{\alpha_{i_j}}$$ is surjective.  This completes the proof.
  \end{proof}

\begin{corollary}\label{lemma3} Let $w \in W$ and fix a reduced expression  $w=s_{{i_1}}s_{{i_2}}
\cdots s_{{i_r}}$.
 Fix an integer $j\in \{1,\ldots, r-1\}$ such that for all $1\leq k < j$, $\langle \alpha_{i_j}, \alpha_{i_k} \rangle = 0$. 
  Then,
 $H^1(w, \alpha_{i_r})_{\alpha_{i_j}}=0$.
\end{corollary}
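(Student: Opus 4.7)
The plan is to realize $H^1(w,\alpha_{i_r})$ as the cokernel of a canonical map between sections of two tangent-type sheaves, and then to apply Proposition~\ref{cor2} directly on the $\alpha_{i_j}$-weight space.

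To set up the short exact sequence, I would take $\mathfrak{p}_{\alpha_{i_r}}$, the Lie algebra of the minimal parabolic $P_{\alpha_{i_r}}$. Since $B\subset P_{\alpha_{i_r}}$ we have $\mathfrak{b}\subset \mathfrak{p}_{\alpha_{i_r}}$, and because the root space $\mathbb{C}\cdot x_{\alpha_{i_r}}$ is the only direction added, $\mathfrak{p}_{\alpha_{i_r}}/\mathfrak{b}\simeq \mathbb{C}_{\alpha_{i_r}}$ as a $B$-module. Moreover $\mathfrak{p}_{\alpha_{i_r}}$ is a $B$-submodule of $\mathfrak{g}$ containing $\mathfrak{b}$, so Proposition~\ref{cor2} applies with $\mathfrak{p}=\mathfrak{p}_{\alpha_{i_r}}$. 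This produces a short exact sequence of $B$-modules
$$0\longrightarrow \mathbb{C}_{\alpha_{i_r}}\longrightarrow \mathfrak{g}/\mathfrak{b}\longrightarrow \mathfrak{g}/\mathfrak{p}_{\alpha_{i_r}}\longrightarrow 0.$$

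Next I would pass to the long exact sequence in cohomology on $X(w)$, identify $H^j(w,\mathfrak{g}/\mathfrak{b})$ with $H^j(X(w),T_{G/B})$, and invoke Theorem~\ref{AS}(1), which gives $H^1(X(w),T_{G/B})=0$. The LES then terminates as
$$H^0(w,\mathfrak{g}/\mathfrak{b})\xrightarrow{\;g\;} H^0(w,\mathfrak{g}/\mathfrak{p}_{\alpha_{i_r}})\xrightarrow{\;\delta\;} H^1(w,\alpha_{i_r})\longrightarrow 0.$$
Taking the $\alpha_{i_j}$-weight space is exact, so it suffices to show that $g$ is surjective on the $\alpha_{i_j}$-weight space. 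But this is exactly the content of Proposition~\ref{cor2} under the hypothesis $\langle \alpha_{i_j},\alpha_{i_k}\rangle=0$ for $1\le k<j$. Surjectivity of $g$ together with exactness forces $\delta=0$ on the $\alpha_{i_j}$-weight space, and since $\delta$ is already surjective, we conclude $H^1(w,\alpha_{i_r})_{\alpha_{i_j}}=0$.

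Essentially no step is a genuine obstacle: the key insight is simply choosing $\mathfrak{p}=\mathfrak{p}_{\alpha_{i_r}}$ so that the kernel $\mathfrak{p}/\mathfrak{b}$ is the one-dimensional $B$-module whose $H^1$ we want to annihilate; Theorem~\ref{AS}(1) and Proposition~\ref{cor2} then do all of the real work. The one small point to verify carefully is that $\mathfrak{p}_{\alpha_{i_r}}$ is $B$-stable as a subspace of $\mathfrak{g}$ (which is immediate since it is the Lie algebra of a group containing $B$) and that $\mathfrak{p}_{\alpha_{i_r}}/\mathfrak{b}$ carries the character $\alpha_{i_r}$, so that the first term of the short exact sequence is $\mathbb{C}_{\alpha_{i_r}}$ rather than some other one-dimensional module.
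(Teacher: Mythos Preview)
Your proof is correct and follows essentially the same approach as the paper: both use the short exact sequence $0\to \mathbb{C}_{\alpha_{i_r}}\to \mathfrak{g}/\mathfrak{b}\to \mathfrak{g}/\mathfrak{p}_{\alpha_{i_r}}\to 0$, invoke $H^1(w,\mathfrak{g}/\mathfrak{b})=0$ from Theorem~\ref{AS}(1), and then apply Proposition~\ref{cor2} on the $\alpha_{i_j}$-weight space to conclude. The only cosmetic difference is that the paper writes the kernel as the root space $\mathfrak{g}_{\alpha_{i_r}}$ rather than as $\mathfrak{p}_{\alpha_{i_r}}/\mathfrak{b}$, which is of course the same one-dimensional $B$-module.
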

 \begin{proof}  Let $\alpha=\alpha_{i_r}$. Now look at following short exact sequence of 
 $B$-modules: 
$$0\longrightarrow \mathfrak g_{\alpha} \longrightarrow \mathfrak g / \mathfrak b \longrightarrow 
\mathfrak g / \mathfrak p_{\alpha} \longrightarrow 0$$
Note that by  Theorem \ref{AS}, $H^1(w, \mathfrak g/ \mathfrak b)=0$.
Applying $H^0(w, - )$ to the above short exact sequence of $B$-modules and taking the 
$\alpha_{i_j}$ weight spaces, we have  the exact sequence of $T$-modules: 
$$0\longrightarrow H^0(w, \alpha)_{\alpha_{i_j}} \longrightarrow H^0(w, \mathfrak g/ \mathfrak b)_
{\alpha_{i_j}} \longrightarrow H^0(w, \mathfrak g/ \mathfrak p_{\alpha})_{\alpha_{i_j}}
\longrightarrow H^1(w, \alpha)_{\alpha_{i_j}} \longrightarrow 0$$
By Proposition \ref{cor2},  we conclude that $H^1(w, \alpha)_{\alpha_{i_j}}=0$. This completes 
the proof.
 \end{proof}

 \section{Action of the minimal Parabolic subgroup $P_{\alpha_{i_1}}$ on $Z(w, \underline i)$}

    Recall that $\phi_w$ denotes the birational morphism $Z(w, \underline i) \longrightarrow X(w)$. As
    in Section 2, the composition of inclusion $X(w)$ in $G/B$
with $\phi_w$ will also be denoted by $\phi_w$. Further, we denote the tangent bundle of 
$Z(w,\underline i)$ 
by $T_{(w,\underline i)}$, where 
 $\underline i=(i_1, i_2, \ldots, i_r)$.
By using the differential map, we see that $T_{(w, \underline i)}$ is a subsheaf  of 
$\phi_w^*(T_{G/B})$.
Hence $H^0(Z(w, \underline i), T_{(w, \underline i)})$ is a $B$-submodule of  $H^0(Z(w, 
\underline i), \phi_w^*(T_{G/B}))$.
 
Since the tangent bundle of $G/B$ is the homogeneous vector bundle associated to the 
representation $\mathfrak g/ \mathfrak b$ of $B$, we have   $$H^0(Z(w, \underline i), 
\phi_w^*(T_{G/B}))= H^0(w,\mathfrak g/\mathfrak b ).$$
Therefore, $H^0(Z(w, \underline i), T_{(w, \underline i)})$ is a $B$-submodule of
$H^0(w,\mathfrak g/\mathfrak b )$. 

Denote by $\mathfrak p_{\alpha_{i_1}}$, the Lie algebra of the minimal parabolic subgroup 
$P_{\alpha_{i_1}}$ of $G$ containing $B$. Note that $\mathfrak b$ is contained in $\mathfrak p_{\alpha_{i_1}}$.
  
  \begin{lemma}\label{l3}
 Let $w=s_{{i_1}}\cdots s_{{i_r}}$ be a reduced expression $\underline i$ for $w$. Then, 
 \begin{enumerate} 
 \item
  There is a non zero homomorphism $f_w: \mathfrak p_{\alpha_{i_1}}\longrightarrow
 H^0(Z(w, \underline i), T_{(w, \underline i)})$ of $B$-modules (which is also a homomorphism of 
 Lie algebras).
 \item If $w=w_0$, the homomorphism $f_{w_0}:\mathfrak p_{\alpha_{i_1}}\longrightarrow H^0(Z(w_0, 
 \underline i), T_{(w_0, \underline i)})$ in (1) is injective.  
\end{enumerate}
 
 \end{lemma}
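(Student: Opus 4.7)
The plan is to construct $f_w$ from the natural left action of $P_{\alpha_{i_1}}$ on $Z(w,\underline i)$ and then, in the case $w = w_0$, detect its injectivity by pushing forward along $\phi_{w_0}$ to $G/B$. For the construction, I would use the quotient presentation $Z(w,\underline i) = (P_{\alpha_{i_1}} \times P_{\alpha_{i_2}} \times \cdots \times P_{\alpha_{i_r}})/B^{r}$: left multiplication by $p \in P_{\alpha_{i_1}}$ on the first factor $(p_1,\ldots,p_r) \mapsto (p\, p_1, p_2, \ldots, p_r)$ commutes with the twisted right $B^{r}$-action in the given definition, so it descends to a regular action of $P_{\alpha_{i_1}}$ on $Z(w,\underline i)$. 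Differentiating at the identity yields the Lie algebra homomorphism
\[
f_w : \mathfrak p_{\alpha_{i_1}} \longrightarrow H^0(Z(w,\underline i), T_{(w,\underline i)}).
\]
The $B$-equivariance of $f_w$ comes from the fact that the action of $B \subset P_{\alpha_{i_1}}$ on the target is induced by conjugation by $B$ on $P_{\alpha_{i_1}}$, which at the Lie algebra level is precisely the adjoint action of $B$ on $\mathfrak p_{\alpha_{i_1}}$. Non-triviality of $f_w$ follows because $\phi_w : Z(w,\underline i) \to G/B$ is $P_{\alpha_{i_1}}$-equivariant (directly from the definition of $\phi_w$) and the induced action on $G/B$ is visibly non-trivial.

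For (2), when $w = w_0$ the map $\phi_{w_0} : Z(w_0, \underline i) \to G/B$ is a birational surjection. Its differential gives an inclusion $T_{(w_0,\underline i)} \hookrightarrow \phi_{w_0}^*(T_{G/B})$ of sheaves on $Z(w_0,\underline i)$, whence an injection
\[
H^0(Z(w_0, \underline i), T_{(w_0, \underline i)}) \hookrightarrow H^0(Z(w_0,\underline i), \phi_{w_0}^*(T_{G/B})) = H^0(w_0, \mathfrak g/\mathfrak b).
\]
Since $X(w_0) = G/B$, the right-hand side equals $H^0(G/B, T_{G/B}) = \mathfrak g$, the classical identification of global vector fields on the flag variety with the adjoint representation. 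By $P_{\alpha_{i_1}}$-equivariance of $\phi_{w_0}$, the composition $\mathfrak p_{\alpha_{i_1}} \xrightarrow{f_{w_0}} H^0(Z(w_0,\underline i), T_{(w_0,\underline i)}) \hookrightarrow \mathfrak g$ sends $\xi \in \mathfrak p_{\alpha_{i_1}}$ to the fundamental vector field on $G/B$ generated by $\xi$ under the $G$-action. This composite is therefore the tautological inclusion $\mathfrak p_{\alpha_{i_1}} \hookrightarrow \mathfrak g$, which is injective, forcing $f_{w_0}$ to be injective.

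The main delicacy is really in (1): one must check carefully that the first-factor left action genuinely descends to $Z(w,\underline i)$ (this is where the twisted nature of the defining $B^r$-action is used) and that differentiation produces a Lie algebra, not merely a vector space, homomorphism with the stated equivariance. Once that setup is in place, part (2) is immediate from the $P_{\alpha_{i_1}}$-equivariance of $\phi_{w_0}$ and the standard computation $H^0(G/B, T_{G/B}) = \mathfrak g$.
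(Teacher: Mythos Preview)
Your argument is correct. Part (1) is essentially identical to the paper's: both construct $f_w$ by differentiating the left $P_{\alpha_{i_1}}$-action on the first factor of the quotient presentation, and both deduce non-triviality from the action (you make this explicit via the equivariance of $\phi_w$ and the visible action on $G/B$, while the paper simply asserts it). For part (2), however, you take a more direct route than the paper. The paper argues by $B$-module theory: any non-zero $B$-submodule of $\mathfrak p_{\alpha_{i_1}}$ must contain the unique $B$-stable line $\mathfrak g_{-\alpha_0}$, and since $H^0(Z(w_0,\underline i), T_{(w_0,\underline i)}) \subset \mathfrak g$ also has $\mathfrak g_{-\alpha_0}$ as its unique $B$-stable line and this line lies in the image of $f_{w_0}$, the kernel must be zero. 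Your approach instead identifies the composite $\mathfrak p_{\alpha_{i_1}} \to H^0(Z(w_0,\underline i), T_{(w_0,\underline i)}) \hookrightarrow \mathfrak g$ explicitly as the canonical inclusion, using only the $P_{\alpha_{i_1}}$-equivariance of $\phi_{w_0}$ and the standard identification $H^0(G/B, T_{G/B}) = \mathfrak g$. This is cleaner and avoids the highest-root line argument entirely; the paper's method, on the other hand, isolates a structural fact (about $B$-stable lines) that it reuses elsewhere, so each has its merits.
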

\begin{proof}Proof of (1):
 Consider the action of $P_{\alpha_{i_1}}$ on $Z(w, \underline i)$ induced by the following 
 left action of $P_{\alpha_{i_1}}$
 on $P_{\alpha_{i_1}}\times P_{\alpha_{i_2}}\times \cdots \times P_{\alpha_{i_r}}$:\\
 Let $p\in P_{\alpha_{i_1}}$ and $x=(p_1, p_2, \cdots, p_r) \in P_{\alpha_{i_1}}\times
 P_{\alpha_{i_2}}\times \cdots\times  P_{\alpha_{i_r}} $
then $p.x:=(pp_1, p_2, \cdots, p_r)$.

Clearly, this action is non trivial.  
Hence, there is a non trivial homomorphism $$\psi_w:P_{\alpha_{i_1}} 
\longrightarrow Aut^0(Z(w, \underline i))$$ of algebraic groups. Consider the action of $B$ on $P_{\alpha_{i_1}}$
by conjugation and the action of $B$ on $Aut^0(Z(w, \underline i))$ via $\psi_w$. Note that $\psi_w$ is $B$-equivariant.

By  \cite[Theorem 3.7]{Mat}, $Aut^0(Z(w, \underline i))$ is an algebraic group 
and $$Lie(Aut^0(Z(w, \underline i)))=H^0(Z(w, \underline i), T_{(w, \underline i)}).$$
Then, the induced homomorphism $$f_w:\mathfrak p_{\alpha_{i_1}}\longrightarrow H^0(Z(w, \underline i)
, T_{(w, \underline i)})$$ of $B$-modules (homomorphism of Lie algebras) is non zero. 

Proof of (2):
Since $f_w:\mathfrak p_{\alpha_{i_1}}\longrightarrow H^0(Z(w, \underline i), T_{(w, \underline i)})$
is a
non zero homomorphism of $B$-modules (homomorphism of Lie algebras),  
$f_w(\mathfrak p_{\alpha_{i_1}})$ contains a $B$-stable line $L$. Let $\mu$
be the character of $B$ such that $b.v=\mu(b).v$ for all $b\in B$ and for all $v\in L$.
That is, $L$ is the one-dimensional space generated by a lowest weight vector of weight $\mu$.

Since $w=w_0$,  $H^0(Z(w_0, \underline i), T_{(w_0, \underline i)})$ is a $B$-submodule 
of $H^0(G/B, T_{G/B})$. By Bott's theorem \cite[Theorem VII]{Bott1} we have 
$H^0(G/B, T_{G/B})=\mathfrak g$.
Hence $H^0(Z(w_0, \underline i), T_{(w_0, \underline i)})$ is a $B$-submodule of $\mathfrak g$.
Since there is a unique $B$-stable one-dimensional subspace $L$ of $\mathfrak g$ 
and the character of $B$ is $-\alpha_0$, we conclude that $\mu=-\alpha_0$ and $L=\mathfrak g_{-\alpha_0} \subset f_{w_0}(\mathfrak p_{\alpha_{i_1}})$.
By the similar arguments, the unique $B$-stable one-dimensional subspace in $\mathfrak p_{\alpha_{i_1}}$ 
is $\mathfrak g_{-\alpha_0}$.
  Hence $f_{w_0}$ is injective (otherwise $Ker (f_{w_0})\neq 0$ and hence the unique $B$-stable line 
  $\mathfrak g_{-\alpha_0}$ is a subspace of $Ker (f_{w_0})$, which is a contradiction).
 \end{proof}

   \begin{corollary}\label{C1}{\  }
  \begin{enumerate}  
    \item 
  $H^0(Z(w_0, \underline i), T_{(w_0, \underline i)})$ is a Lie subalgebra of $\mathfrak g$.
  \item
  Any Borel subalgebra of $H^0(Z(w_0, \underline i), T_{(w_0, \underline i)})$ is isomorphic to 
  $\mathfrak b$.
 \item Any maximal toral subalgebra of $H^0(Z(w_0, \underline i), T_{(w_0, \underline i)})$ is 
 isomorphic to $\mathfrak h$.
 
 \end{enumerate}
\end{corollary}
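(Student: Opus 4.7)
The plan is to identify $L := H^0(Z(w_0, \underline i), T_{(w_0, \underline i)})$ with a Lie subalgebra of $\mathfrak{g}$ via the birational morphism $\phi_{w_0}$, and then to deduce (2) and (3) from the maximality of $\mathfrak{b}$ (respectively $\mathfrak{h}$) inside $\mathfrak{g}$, combined with the conjugacy of Borel (respectively maximal toral) subalgebras in the algebraic Lie algebra $L = \mathrm{Lie}(Aut^0(Z(w_0, \underline i)))$.

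For (1), I would start from the differential inclusion $d\phi_{w_0}\colon T_{(w_0, \underline i)} \hookrightarrow \phi_{w_0}^* T_{G/B}$. Taking global sections and using the isomorphism $H^0(Z(w_0, \underline i), \phi_{w_0}^* T_{G/B}) \cong H^0(X(w_0), T_{G/B}) = H^0(G/B, T_{G/B}) = \mathfrak{g}$ (the last identification is Bott's theorem, already invoked in Lemma \ref{l3}), this produces an injection $\iota\colon L \hookrightarrow \mathfrak{g}$. The crux is that $\iota$ respects the Lie bracket. Let $U \subseteq Z(w_0, \underline i)$ be the open dense subset on which $\phi_{w_0}$ restricts to an isomorphism onto an open subset of $G/B$; by construction, for each $s \in L$ the vector fields $s|_U$ on $U$ and $\iota(s)|_{\phi_{w_0}(U)}$ on $\phi_{w_0}(U)$ are $\phi_{w_0}$-related. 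Since Lie brackets of $\phi$-related vector fields are $\phi$-related whenever $\phi$ is a local isomorphism, $\iota([s_1, s_2])$ and $[\iota(s_1), \iota(s_2)]$ coincide on $\phi_{w_0}(U)$; being global sections of $T_{G/B}$ that agree on a non-empty open subset of the irreducible variety $G/B$, they are equal. Hence $L$ is a Lie subalgebra of $\mathfrak{g}$.

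For (2) and (3), I would first observe that the action of $P_{\alpha_{i_1}}$ on $Z(w_0, \underline i)$ is $\phi_{w_0}$-equivariant for the standard left action of $G$ on $G/B$, so the composition $\mathfrak{p}_{\alpha_{i_1}} \xrightarrow{f_{w_0}} L \xrightarrow{\iota} \mathfrak{g}$ is the tautological inclusion; together with Lemma \ref{l3}(2) this gives $\mathfrak{b} \subseteq \mathfrak{p}_{\alpha_{i_1}} \subseteq L$, and in particular $\mathfrak{h} \subseteq L$, as subalgebras of $\mathfrak{g}$. Since $\mathfrak{b}$ is already maximal solvable in $\mathfrak{g}$, it is a fortiori maximal solvable in $L$, i.e.\ a Borel subalgebra of $L$; by conjugacy of Borel subalgebras under $Aut^0(Z(w_0, \underline i))$, every Borel of $L$ is isomorphic to $\mathfrak{b}$. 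The same argument with ``maximal toral'' replacing ``maximal solvable'' and $\mathfrak{h}$ in place of $\mathfrak{b}$ gives (3). The main obstacle is part (1): the verification that $\iota$ is a Lie algebra homomorphism, which rests on the careful use of $\phi_{w_0}$-relatedness on the open iso-locus together with the identification $H^0(Z(w_0, \underline i), \phi_{w_0}^* T_{G/B}) = \mathfrak{g}$. Parts (2) and (3) are then essentially formal consequences.
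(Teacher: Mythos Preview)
Your argument is correct, and for parts (2) and (3) it is essentially the paper's argument: both show that $\mathfrak b$ (resp.\ $\mathfrak h$) sits inside $L$ via $f_{w_0}$, observe that maximality in $\mathfrak g$ forces maximality in the subalgebra $L$, and then invoke conjugacy.

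Part (1), however, is handled quite differently. The paper proceeds algebraically: knowing that $L$ is a $B$-submodule of $\mathfrak g$ containing $\mathfrak b$, it notes that the two brackets automatically agree whenever one argument lies in $\mathfrak b$, and then verifies by an explicit Jacobi-identity computation that $[x_\beta,x_\alpha]' = [x_\beta,x_\alpha]$ for $\beta\in R^+$, $\alpha\in S$ with both weights occurring in $L$. Your route is geometric: the map $\iota$ is realised as pushforward of vector fields along the birational morphism $\phi_{w_0}$, and compatibility with brackets follows from $\phi$-relatedness on the dense open iso-locus together with irreducibility of $G/B$. This is cleaner and more conceptual; it bypasses the Jacobi calculation entirely and would apply verbatim to any birational morphism between smooth projective varieties. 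The paper's computation, on the other hand, stays within the $B$-module framework already set up and does not require tracking the identification $H^0(Z(w_0,\underline i),\phi_{w_0}^*T_{G/B})=\mathfrak g$ through the projection formula. One small point worth making explicit in your write-up: the map $T_{(w_0,\underline i)}\to\phi_{w_0}^*T_{G/B}$ fails to be injective on fibres over the exceptional locus, but it is injective as a map of sheaves since both are locally free on an integral scheme and the map is generically an isomorphism; this is what guarantees injectivity of $\iota$ on global sections.
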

\begin{proof}

Proof of (1):
 By Lemma \ref{l3}(2), $\mathfrak b$ is a Lie subalgebra of $H^0(Z(w_0, \underline i),
 T_{(w_0, \underline i)})$. 
  Since $H^0(Z(w_0, \underline i), T_{(w_0, \underline i)})$ is a $B$-submodule of 
$\mathfrak g$, for any $Y\in H^0(Z(w_0, \underline i), T_{(w_0, \underline i)})$ and for any $X \in \mathfrak b$ the Lie bracket 
$[X, Y]$ in  $\mathfrak g$ is same as the Lie bracket 
in $H^0(Z(w_0, \underline i), T_{(w_0, \underline i)})$.
It remains to prove that for every $\alpha, \beta \in R^+$ such that $\alpha, \beta$ are weights of 
$H^0(Z(w_0, \underline i), T_{(w_0, \underline i)})$, the Lie bracket $[x_{\beta}, x_{\alpha}]$ in  $\mathfrak g$ is same as the Lie bracket 
in $H^0(Z(w_0, \underline i), T_{(w_0, \underline i)})$.

Note that the Lie subalgebra of $H^0(Z(w_0, \underline i), T_{(w_0, \underline i)})$ generated by 
$\mathfrak g_{\beta}\cap H^0(Z(w_0, \underline i), T_{(w_0, \underline i)})$ for $\beta \in R^+$ is same as the  
Lie subalgebra generated by $\mathfrak g_{\alpha}\cap H^0(Z(w_0, \underline i), T_{(w_0, \underline i)})$ for $\alpha \in S$. 
Hence it is enough to prove that for every $\beta \in R^+$ and $\alpha\in S$ such that $\alpha, \beta$ are weights of 
$H^0(Z(w_0, \underline i), T_{(w_0, \underline i)})$, the Lie bracket $[x_{\beta}, x_{\alpha}]$ in  $\mathfrak g$ is same as the Lie bracket 
in $H^0(Z(w_0, \underline i), T_{(w_0, \underline i)})$.

Let $[-,-]'$ be the Lie bracket in $H^0(Z(w_0, \underline i), 
T_{(w_0, \underline i)})$.
For $\beta \in R^+, \alpha \in S$, by Jacobi identity
we have $$[x_{-\beta}, [x_{\beta}, x_{\alpha}]']'=[[x_{-\beta}, x_{\beta}]', x_{\alpha}]'+[x_{\beta}, [x_{-\beta}, x_{\alpha}]']'.$$
Since $x_{-\beta}\in\mathfrak b$ and $\mathfrak b$ is a Lie subalgebra of $H^0(Z(w_0, \underline i), T_{(w_0, \underline i)})$
and $H^0(Z(w_0, \underline i), T_{(w_0, \underline i)})$ is a $B$-submodule of $\mathfrak g$, we have 
$$[x_{-\beta}, x_{\beta}]'=[x_{-\beta}, x_{\beta}] ~ and~ 
[x_{-\beta}, x_{\alpha}]'=[x_{-\beta}, x_{\alpha}].$$
Hence, we have 
\begin{equation}
 [x_{-\beta}, [x_{\beta}, x_{\alpha}]']'=[[x_{-\beta}, x_{\beta}], x_{\alpha}]'+[x_{\beta}, [x_{-\beta}, x_{\alpha}]]'.
\end{equation}

Note that $[x_{-\beta}, x_{\beta}], [x_{-\beta}, x_{\alpha}]\in \mathfrak b$.
Therefore, by (5.1) and Jacobi identity we have  
$$[x_{-\beta}, [x_{\beta}, x_{\alpha}]']'=[[x_{-\beta}, x_{\beta}], x_{\alpha}]+[x_{\beta}, [x_{-\beta}, x_{\alpha}]]=
[x_{-\beta}, [x_{\beta}, x_{\alpha}]].$$

Since $x_{-\beta}\in \mathfrak b$, we have $[x_{-\beta}, [x_{\beta}, x_{\alpha}]']'=[x_{-\beta}, [x_{\beta}, x_{\alpha}]']$.
Hence, we have  

\begin{equation}
 [x_{-\beta}, [x_{\beta}, x_{\alpha}]']=[x_{-\beta}, [x_{\beta}, x_{\alpha}]].
\end{equation}
If $[x_{\beta}, x_{\alpha}]=0$, then $\alpha+\beta \notin R$. In particular,
$H^0(Z(w_0, \underline i), T_{(w_0, \underline i)})_{\alpha+\beta}=0$.
Then, we have  $[x_{\beta}, x_{\alpha}]'=0$.

If $[x_{\beta}, x_{\alpha}]\neq 0$, then $\alpha+\beta \in R$.
$$[x_{-\beta}, [x_{\beta}, x_{\alpha}]]=[x_{\beta}, [x_{-\beta}, x_{\alpha}]]-h_{\beta}\cdot x_{\alpha}.$$
If $[x_{-\beta}, x_{\alpha}]=0$ and $h_{\beta}\cdot x_{\alpha}=0$, then $\alpha, \beta$ are orthogonal
and $\beta-\alpha\notin R$. Hence, we have $\alpha+\beta \notin R$. This contradicts the 
assumption. 
Hence, we have $[x_{\beta}, x_{\alpha}]'=c_1x_{\alpha+\beta}$ and $[x_{\beta}, x_{\alpha}]=c_2x_{\alpha+\beta}$, with $c_2\neq 0$.
Therefore, by (5.2) it follows that $c_1=c_2$ and $[x_{\beta}, x_{\alpha}]'=[x_{\beta}, x_{\alpha}]$.
Hence
$H^0(Z(w_0, \underline i), T_{(w_0, \underline i)})$ is a Lie subalgebra of $\mathfrak g$.

 Proof of (2): By (1), $H^0(Z(w_0, \underline i), T_{(w_0, \underline i)})$ is a Lie subalgebra of $\mathfrak g$.
Now we claim that $\mathfrak b$ is a Borel subalgebra of $H^0(Z(w_0, \underline i), 
T_{(w_0, \underline i)})$.
 Otherwise, there exists a Borel subalgebra $\mathfrak b'$ of $H^0(Z(w_0, \underline i), 
 T_{(w_0, \underline i)})$ properly containing 
$\mathfrak b$. Since $H^0(Z(w_0, \underline i), T_{(w_0, \underline i)})$ is a Lie subalgebra of 
$\mathfrak g$, we see that $\mathfrak g_{\alpha}\subset \mathfrak b'$ for some 
simple root $\alpha$.
 Since $\mathfrak b$ is a Borel subalgebra of $\mathfrak g$
 and $\mathfrak b$ is a Lie subalgebra of $H^0(Z(w_0, \underline i), T_{(w_0, \underline i)})$, the simple Lie algebra $sl_{2, \alpha}$ is a Lie subalgebra of $\mathfrak b'$,
which is a contradiction to the solvability of $\mathfrak b'$. Hence $\mathfrak b$ is a 
Borel subalgebra of $H^0(Z(w_0, \underline i), T_{(w_0, \underline i)})$.
Since any two  Borel subalgebras of $H^0(Z(w_0, \underline i), T_{(w_0, \underline i)})$ are
conjugate (see \cite[p.84, Theorem 16.4]{Hum1}),  we conclude (2).

Proof of (3): Since any two maximal toral subalgebras of $H^0(Z(w_0, \underline i), 
T_{(w_0, \underline i)})$ are conjugate (see \cite[p.84, Corollary 16.4]{Hum1}), the
proof follows from (2).
\end{proof}

Let $w\in W$, let $w=s_{{i_1}}s_{{i_2}}\cdots s_{{i_r}}$
be a reduced expression $\underline i$ of $w$.
 Fix a reduced expression $w_0=s_{{j_1}}s_{{j_2}}\cdots s_{{j_r}}s_{j_{r+1}}\cdots s_{j_N}$ of 
$w_0$  such that $\underline j=(j_1,j_2,\ldots j_N)$ and $\underline i=(j_1,j_2,\ldots j_r)$.
Let $v=s_{{j_{r+1}}}s_{{j_{r+2}}}\cdots
s_{{j_N}}$ and  $\underline j'=(j_{r+1},\ldots, j_N)$.  

 Since the  $Z(v, \underline j')$-fibration $Z(w_{0}, \underline j)\longrightarrow Z(w, \underline i)$
is $P_{\alpha_{i_{1}}}$ equivariant, it follows that $$H^{0}(Z(w_{0} ,
\underline j), T_{(w_{0}, \underline j)})
\longrightarrow H^{0}(Z(w , \underline i) , T_{(w, \underline i)})$$ is a
homomorphism of $P_{\alpha_{i_{1}}}$-modules. Hence, it is a homomorphism
of $\mathfrak p_{\alpha_{i_{1}}}$-modules.
Thus, the restriction of this map to $\mathfrak p_{\alpha_{i_{1}}}$ is the same as
the map induced by the action of $P_{\alpha_{i_{1}}}$ on $Z(w, \underline i).$

Note that since  $f_{w_0}:\mathfrak p_{\alpha_{i_1}}\longrightarrow H^0(Z(w_0, 
 \underline i), T_{(w_0, \underline i)})$ is injective (see Lemma \ref{l3}(2)),  we identify  $\mathfrak p_{\alpha_{i_1}}$ as a Lie subalgebra of $H^0(Z(w_0, 
 \underline i), T_{(w_0, \underline i)})$.

Hence, we have the following commutative diagram of $P_{\alpha_{i_{1}}}$-modules:

 \begin{center}  
  $\xymatrix{\mathfrak p_{\alpha_{i_1}} \ar[dr]_{f_w} \ar@{^{(}->}[r] & H^{0}(Z(w_0, \underline j) , T_{(w_0 , \underline j)}) \ar[d]\\
&H^0(Z(w, \underline i), T_{(w, \underline i)}) 
} $
\end{center}
Further, the maps in the above diagram are homomorphisms of Lie algebras.

  For simplicity of notation, we denote both the natural map $$H^{0}(Z(w_0, \underline j) , T_{(w_0 , \underline j)}) \longrightarrow H^0(Z(w, \underline i), T_{(w, \underline i)})$$
 and its restriction to $\mathfrak p_{\alpha_{i_1}}$ by $f_w$.

 Let $d(w)$ be the  number of distinct $i_j$'s in $ \underline i=(i_1,i_2,\ldots,i_r)$ 
(i.e, the number of 
distinct simple reflections $s_{{i_j}}$'s appearing in the reduced expression $\underline i$ of 
$w$).
Let $\leq$ be the Bruhat-Chevalley ordering on $W$.
Note that $d(w)$ is equal to the number of distinct Schubert curves in $X(w)$.
That is, $d(w)$ is equal to the number of distinct $j\in \{1,2,\ldots, n\}$ such that $s_j\leq w$.  
In particular, it 
is independent of the choice of the reduced expression $\underline i$ of $w$.
Further, we also note that $d(w_0)=n$.

 Now, we prove the following Lemma:
\begin{lemma}\label{torus} {\  }

\begin{enumerate}
 \item 
The dimension of the zero weight space 
$H^0(Z(w, \underline i), T_{(w, \underline i)})_0$ is at most $d(w)$.
\item 
In particular, $dim(H^0(Z(w, \underline i), T_{(w, \underline i)})_0)\leq rank(G)$. 

\end{enumerate}

 \end{lemma}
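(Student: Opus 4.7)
The plan is to induct on $l(w)$ using the $(rel)$ sequence of Section~3 and the zero-weight computation of Lemma~\ref{1}. More precisely, from the short exact sequence
$$ 0\to \phi_w^*\mathcal{L}(w_0,\alpha_{i_r}) \to T_{(w,\underline i)} \to f_r^*T_{(v,\underline i')} \to 0$$
(where $v=s_{i_1}\cdots s_{i_{r-1}}$ and $\underline i'=(i_1,\ldots,i_{r-1})$), together with the identifications $H^0(Z(w,\underline i),\phi_w^*\mathcal{L}(w_0,\alpha_{i_r}))=H^0(w,\alpha_{i_r})$ and $H^0(Z(w,\underline i),f_r^*T_{(v,\underline i')})=H^0(Z(v,\underline i'),T_{(v,\underline i')})$ (the latter since $f_r$ is a smooth $\mathbb{P}^1$-fibration), I obtain an exact sequence of $T$-modules whose $0$-weight part reads
$$ 0\to H^0(w,\alpha_{i_r})_0 \to H^0(Z(w,\underline i),T_{(w,\underline i)})_0 \to H^0(Z(v,\underline i'),T_{(v,\underline i')})_0. $$
In particular
$$ \dim H^0(Z(w,\underline i),T_{(w,\underline i)})_0 \le \dim H^0(w,\alpha_{i_r})_0 + \dim H^0(Z(v,\underline i'),T_{(v,\underline i')})_0.$$

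The key input for controlling the first summand is Lemma~\ref{1}: since the expression is reduced one has $\alpha_{i_{r-1}}\ne\alpha_{i_r}$, so the alternative ``$\alpha_{i_j}=\alpha_{i_r}$ for some $1\le j<r-1$'' is equivalent to ``$s_{i_r}$ already appears among $s_{i_1},\ldots,s_{i_{r-1}}$''. Consequently
$$\dim H^0(w,\alpha_{i_r})_0 = \begin{cases} 0 & \text{if } s_{i_r}\in\{s_{i_1},\ldots,s_{i_{r-1}}\},\\ 1 & \text{otherwise.}\end{cases}$$
Comparing with the combinatorial definition of $d$, in the first case $d(w)=d(v)$ and in the second case $d(w)=d(v)+1$, so in either case
$$\dim H^0(w,\alpha_{i_r})_0 + d(v) \le d(w).$$

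For the base case $l(w)=0$, $Z(w,\underline i)$ is a point, $H^0(Z(w,\underline i),T_{(w,\underline i)})=0$, and $d(w)=0$. Assuming by induction that $\dim H^0(Z(v,\underline i'),T_{(v,\underline i')})_0\le d(v)$, the displayed inequalities give $\dim H^0(Z(w,\underline i),T_{(w,\underline i)})_0\le d(w)$, proving (1). Part (2) is then immediate from $d(w)\le n=\mathrm{rank}(G)$, since distinct simple reflections appearing in a reduced expression are indexed by a subset of $\{1,\ldots,n\}$.

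The only subtlety I expect is bookkeeping in the reducedness argument that lets me replace ``$\alpha_{i_r}$ appears among $\alpha_{i_1},\ldots,\alpha_{i_{r-2}}$'' with ``$s_{i_r}$ appears among $s_{i_1},\ldots,s_{i_{r-1}}$''; but this follows at once from the fact that a reduced word cannot have two identical consecutive letters. Everything else is a direct assembly of Lemma~\ref{1}, the $(rel)$ sequence, and induction on $l(w)$.
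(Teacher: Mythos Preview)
Your proof is correct and takes a genuinely different route from the paper.

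The paper does \emph{not} induct on $l(w)$ via the $(rel)$ sequence. Instead, it uses the embedding $H^0(Z(w,\underline i),T_{(w,\underline i)})\hookrightarrow H^0(w,\mathfrak g/\mathfrak b)$ from Section~5 and bounds the target. Concretely, from the exact sequence $0\to\mathfrak b\to\mathfrak g\to\mathfrak g/\mathfrak b\to 0$ and the vanishing $H^1(w,\mathfrak b)_0=0$ (Lemma~\ref{l2}), the paper obtains
\[
0\longrightarrow H^0(w,\mathfrak b)_0\longrightarrow \mathfrak h\longrightarrow H^0(w,\mathfrak g/\mathfrak b)_0\longrightarrow 0,
\]
and then proves the precise claim $\dim H^0(w,\mathfrak b)_0=\mathrm{rank}(G)-d(w)$ by explicitly identifying which elements of $\mathfrak h$ survive under the successive evaluation maps. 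This gives the \emph{equality} $\dim H^0(w,\mathfrak g/\mathfrak b)_0=d(w)$, from which the inequality for the submodule $H^0(Z(w,\underline i),T_{(w,\underline i)})_0$ follows.

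Your argument is more self-contained for this particular lemma: it avoids both the embedding into $H^0(w,\mathfrak g/\mathfrak b)$ and the analysis of $H^0(w,\mathfrak b)_0$, relying only on Lemma~\ref{1} and the $(rel)$ sequence already set up in Section~3. The paper's approach, on the other hand, yields the stronger statement $\dim H^0(w,\mathfrak g/\mathfrak b)_0=d(w)$, and the short exact sequence displayed above is reused in the proof of Lemma~\ref{Borel}; so the extra work is not wasted. Your handling of the bookkeeping (that $\alpha_{i_{r-1}}\ne\alpha_{i_r}$ by reducedness, so the two formulations of the dichotomy in Lemma~\ref{1} coincide) is exactly right.
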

\begin{proof}
Consider the following short exact sequence  of $B$-modules:
$$0\longrightarrow \mathfrak b \longrightarrow \mathfrak g \longrightarrow \mathfrak g/ \mathfrak
b \longrightarrow 0 
$$
 By applying $H^0(w, -)$  to the above short exact sequence, we have the following exact sequence
 of $B$-modules:
 $$0\longrightarrow H^0(w, \mathfrak b)\longrightarrow H^0(w, \mathfrak g)\longrightarrow 
 H^0(w, \mathfrak g/\mathfrak b)\longrightarrow H^1(w, \mathfrak b)\longrightarrow 0$$  
(Note that $H^1(w, \mathfrak g)=0$ (see \cite[Lemma $2.5(2)$]{Ka4}))

 By Lemma \ref{l2}, we have $H^1(w, \mathfrak b)_0=0$. Since $H^0(w, \mathfrak g)=\mathfrak g$,
by taking the zero weight space to the above exact sequence  
we have the following short exact sequence of $T$-modules;
$$ 0\longrightarrow H^0(w, \mathfrak b)_0\longrightarrow \mathfrak h \overset{\phi}
\longrightarrow H^0(w, \mathfrak g/\mathfrak b)_0\longrightarrow 0$$

Claim: $dim (H^0(w, \mathfrak b)_0)=rank(G)-d(w).$

We use the similar arguments as in the proof of Lemma \ref{l2} and Lemma \ref{1} to prove the claim.

Let
$w=s_{{i_1}}s_{{i_2}}\cdots s_{{i_r}}$
be a reduced expression $\underline i$ of $w$.
Since $S$ is a basis for the complex vector space $\mathfrak h$, for every 
$1\leq j \leq n$ there exists a $h(\alpha_{j})\in \mathfrak h$
such that $\alpha_i(h(\alpha_j))=\delta_{i,j}$ for $1\leq i\leq n$.
First note that for every $i\neq j$, the one-dimensional subspace 
$\mathbb C h(\alpha_j)$ of $\mathfrak h$
is an indecomposable $B_{\alpha_{i}}$-direct summand of $\mathfrak b$.
Therefore, the image of the evaluation map 
$ev:H^0(w,\mathfrak b)\longrightarrow \mathfrak b$ contains $h(\alpha_{j})$
for every $1\leq j\leq n$ such that $s_j\nleq w$.
Let $1\leq k\leq n$ such that $s_k\leq w$. Let $1\leq j_0\leq r$
be  the largest integer such that $i_{j_0}=k$, let $u=s_{i_{j_0+1}}\cdots s_{i_r}$.
Note that since $\alpha_{i_j}(h(\alpha_k))=0$ for $j_0+1\leq j\leq r$, $\mathbb C h_{\alpha_k}$ is 
contained in the image of the evaluation map $ev:H^0(u, \mathfrak b)\longrightarrow \mathfrak b$.
Therefore, $\mathbb C h(\alpha_k)\oplus \mathbb C_{-\alpha_k}$ is an indecomposable $B_{\alpha_k}
$-direct summand of $H^0(u, \mathfrak b)$ (see \cite[Lemma 3.3]{Ka4}).

Further, by Lemma \ref{lemma2.4} $$\mathbb C h(\alpha_k)\oplus \mathbb C_{-\alpha_k}=
V\otimes \mathbb C_{-\omega_k}$$ where $V$ is the standard $2$-
dimensional representation of $\widetilde{L}_{\alpha_k}.$
Therefore, by Lemma \ref{lemma2.3} and Lemma \ref{lemma2.2}, 
$H^0(s_{i_{j_0}}, \mathbb C h(\alpha_k)\oplus \mathbb C_{-\alpha_k} )=0$.

Let $v=s_{i_{j_0}}u$.
By {\it SES}, we conclude that  $H^0(v, \mathfrak b)_0 = \bigoplus_{\{i: s_i\nleq v\}}\mathbb C h(\alpha_i)$.
In view of \cite[Lemma 2.6]{Ka4}, 
$H^0(w, \mathfrak b)_0 =\bigoplus_{\{i: s_i\nleq w\}}\mathbb C h(\alpha_i)$.

  Then by the above claim and the short exact sequence, we have $$dim (H^0(w, \mathfrak g/ \mathfrak b)_0)=d(w).$$
  Since $H^0(Z(w, \underline i), T_{(w, \underline i)})$ is a $B$-submodule of 
 $H^0(w, \mathfrak g/ \mathfrak b)$, we have $$dim(H^0(Z(w, \underline i), T_{(w, \underline i)})_0)
 \leq d(w).$$
\end{proof}

\section {The $B$-module of the global sections of the tangent bundle on $Z(w, \underline i)$} 
In this section,
we study the $B$-module of the global sections of the tangent bundle on $Z(w, \underline i)$.
In particular, we prove that the dimension of the zero weight space of $H^0(Z(w, \underline i),
T_{(w, \underline i)})$
is equal to $d(w)$, the number  of Schubert curves in $X(w)$.
We also prove that  $H^0(Z(w, \underline i), T_{(w, \underline i)})$ contains a Lie subalgebra $\mathfrak b'$
  isomorphic to $\mathfrak b$ if and only if $w^{-1}(\alpha_{0})<0$.

We use the notation as in the previous section.

Let $w \in W$ and fix a reduced expression 
$w=s_{{i_1}}s_{{i_2}}\cdots s_{{i_r}}$.
 Let $supp(w):=\{j\in \{1,2,\ldots, n\} : s_j\leq w\}$, the support of $w$.
Note that $d(w)=|supp(w)|$.
 
 We have the following proposition:

\begin{proposition} \label{Prop0} {\ }
 \begin{enumerate}
  \item  
 $\{f_w(h_{\alpha_{i_j}}): j\in supp(w) \}$ forms a basis of $H^0(Z(w, \underline i), T_{(w,\underline i)})_{0}$.
\item In particular, $dim(H^0(Z(w, \underline i), T_{(w,\underline i)})_{0})=d(w)$.
 \item 
 The image $f_w(\mathfrak h)$ is a maximal toral subalgebra of $H^0(Z(w, \underline i), 
 T_{(w,\underline i)})$.


 \end{enumerate}

\end{proposition}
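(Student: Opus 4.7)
The plan is to identify $f_w|_{\mathfrak h}$ with the weight-zero part of the connecting map arising from the short exact sequence $0\to\mathfrak b\to\mathfrak g\to\mathfrak g/\mathfrak b\to 0$ of $B$-modules, and then read off dimensions from the work already done in Lemma \ref{torus} and Lemma \ref{l2}. Because the $P_{\alpha_{i_1}}$-action on $Z(w,\underline i)$ is compatible (via $\phi_w$) with the natural $G$-action on $G/B$, the composition
$$\mathfrak h \xrightarrow{f_w} H^0(Z(w,\underline i),T_{(w,\underline i)}) \hookrightarrow H^0(w,\mathfrak g/\mathfrak b)$$
agrees with the restriction to $\mathfrak h$ of the map $\mathfrak g=H^0(w,\mathfrak g)\to H^0(w,\mathfrak g/\mathfrak b)$ induced by $\mathfrak g\twoheadrightarrow \mathfrak g/\mathfrak b$. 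Taking the weight-zero part of the associated long exact sequence and using $H^1(w,\mathfrak b)_0=0$ from Lemma \ref{l2} produces the short exact sequence
$$0\to H^0(w,\mathfrak b)_0 \to \mathfrak h \to H^0(w,\mathfrak g/\mathfrak b)_0 \to 0.$$
The proof of Lemma \ref{torus} identifies $H^0(w,\mathfrak b)_0 = \bigoplus_{k\notin\mathrm{supp}(w)} \mathbb C\, h(\alpha_k)$, which as a subspace of $\mathfrak h$ equals $\bigcap_{k\in\mathrm{supp}(w)}\ker\alpha_k$ and has dimension $\mathrm{rank}(G)-d(w)$. Hence $f_w(\mathfrak h)$, being the image, has dimension $d(w)$ and fills up $H^0(w,\mathfrak g/\mathfrak b)_0$. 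Sandwiching $f_w(\mathfrak h)\subset H^0(Z(w,\underline i),T_{(w,\underline i)})_0\subset H^0(w,\mathfrak g/\mathfrak b)_0$ against the upper bound $d(w)$ from Lemma \ref{torus} forces equality throughout, which establishes (2).

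For (1) it only remains to check that $\{h_{\alpha_k}:k\in\mathrm{supp}(w)\}$ is independent modulo $\ker(f_w|_{\mathfrak h})=\bigcap_{k\in\mathrm{supp}(w)}\ker\alpha_k$. If $v=\sum_{k\in\mathrm{supp}(w)}c_k h_{\alpha_k}$ lies in this kernel, then the conditions $\alpha_l(v)=\sum_{k}c_k\langle\alpha_l,\alpha_k\rangle=0$ for $l\in\mathrm{supp}(w)$ yield a linear system whose matrix is a principal submatrix of the Cartan matrix of $G$; any such principal submatrix is itself a non-singular Cartan matrix, so all $c_k$ vanish, confirming that the $d(w)$ images form a basis.

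For (3), $f_w(\mathfrak h)$ is abelian and consists of semisimple elements, since it is the Lie algebra of the subtorus $T'\subset\mathrm{Aut}^0(Z(w,\underline i))$ obtained by integrating $f_w|_{\mathfrak h}$ via the group homomorphism $\psi_w$ of Lemma \ref{l3}. The $T$-weight grading on $H^0(Z(w,\underline i),T_{(w,\underline i)})$ coincides with the adjoint grading by $T'$, so the centralizer of $f_w(\mathfrak h)$ in $H^0(Z(w,\underline i),T_{(w,\underline i)})$ is exactly the zero weight space, which by (2) equals $f_w(\mathfrak h)$. Any toral subalgebra containing $f_w(\mathfrak h)$ is abelian and hence trapped in this centralizer, forcing equality and showing $f_w(\mathfrak h)$ is maximal toral. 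The main technical point is the initial identification of $f_w|_{\mathfrak h}$ with the weight-zero connecting map of the LES; once that compatibility is in hand, the dimension count is driven entirely by Lemmas \ref{l2} and \ref{torus}, and the basis statement reduces to the non-singularity of principal Cartan submatrices.
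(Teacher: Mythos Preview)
Your argument is correct and takes a genuinely different, more direct route than the paper. The paper proves (1) and (2) by \emph{descending} induction on $l(w)$: it starts from $w_0$, where Lemma~\ref{l3}(2) and Corollary~\ref{C1} give the base case, and at each step compares $H^0(Z(v,\underline i'),T_{(v,\underline i')})_0$ with $H^0(Z(w,\underline i),T_{(w,\underline i)})_0$ via the long exact sequence and the explicit computation of $H^0(v,\alpha_{i_{r+1}})_0$ from Lemma~\ref{1}. For (3) the paper again runs an induction, this time \emph{ascending} on $l(w)$, with a case analysis (using Lemma~\ref{1}, Corollary~\ref{cor1}, and Corollary~\ref{commuting}) to show that the kernel of $H^0(Z(w,\underline i),T_{(w,\underline i)})\to H^0(Z(u,\underline i'),T_{(u,\underline i')})$ has toral part of dimension at most one. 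By contrast, you bypass both inductions: for (1)--(2) you identify $f_w|_{\mathfrak h}$ with the weight-zero part of the map $\mathfrak g\to H^0(w,\mathfrak g/\mathfrak b)$ via the $B$-equivariance of $\phi_w$ and then squeeze using the exact sequence already established in Lemma~\ref{torus}; the basis statement becomes the nondegeneracy of a principal Cartan submatrix. For (3) your self-centralizing argument---that the adjoint action of the torus $\psi_w(T)$ on $H^0(Z(w,\underline i),T_{(w,\underline i)})$ coincides with the $T$-module structure, so the centralizer of $f_w(\mathfrak h)$ is precisely the zero weight space---is considerably cleaner than the paper's case analysis. The paper's inductive approach has the advantage of making visible, step by step, exactly which $h_{\alpha}$ enters or leaves the kernel along the fibration tower (information later reused in Corollary~\ref{kernel}); your approach trades that bookkeeping for a single global identification, at the cost of needing the compatibility between the action map $\mathfrak b\to H^0(Z(w,\underline i),T_{(w,\underline i)})$ and the sheaf-theoretic map $\mathfrak g\to H^0(w,\mathfrak g/\mathfrak b)$, which is exactly the content of the $B$-equivariance of $\phi_w$.
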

\begin{proof}
  If $w=w_0$,  then by Lemma \ref{l3}(2), $f_{w_0}$ is injective and by Corollary \ref{C1}, $$dim(H^0(Z(w_0, \underline i), T_{(w_0, \underline i)}))_0
  )= rank(G)=d(w_0).$$ Hence, $\{f_{w_0}(h_{\alpha_{i_j}}):j\in supp(w_0)\}$ forms a basis of $H^0(Z(w_0, \underline i), T_{(w_0,\underline i)})_{0}$.

  Otherwise, choose a reduced expression $w_0=s_{{j_1}}s_{{j_2}}\ldots s_{{j_N}}$ of
  $w_0$ such that $(j_1, j_2, \cdots, j_r)=\underline i$.
Let $v=s_{{j_1}}s_{{j_2}}\cdots s_{{j_{r+1}}}$ and $\underline i'=(j_1, \ldots j_r, j_{r+1})$.
Note that $l(v)=l(w)+1$.
 By descending induction on $l(w)$,  $\{f_v(h_{\alpha_{i_j}}):j\in supp(v)\}$ forms a basis of $H^0(Z(v, \underline i'), T_{(v,\underline i')})_{0}$
 and  $$dim(H^0(Z(v, \underline i'), T_{(v, \underline i')}))_0)
 = d(v).$$

 Note that by Lemma \ref{torus}, $dim (H^0(Z(w, \underline i), T_{(w, \underline i)})_0) \leq d(w)$.
  By using $\it{LES}$ and Lemma \ref{vanishing}, we have the following exact sequence of 
$B$-modules:
 
 $0\longrightarrow H^0(v, \alpha_{i_{r+1}})\longrightarrow H^0(Z(v,\underline i'), 
 T_{(v,\underline i')})\longrightarrow H^0(Z(w,\underline i), T_{(w, \underline i)})
 \longrightarrow H^1(v, \alpha_{i_{r+1}}) 
 \longrightarrow \\ H^1(Z(v,\underline i'), T_{(v, \underline i')})\longrightarrow 
 H^1(Z(w, \underline i), T_{(w,\underline i)})
 \longrightarrow 0.$

 By taking the zero weight spaces, we have the following exact sequence of $T$-modules:
  
  $$0\longrightarrow H^0(v, \alpha_{i_{r+1}})_0 \longrightarrow H^0(Z(v, \underline i'), 
  T_{(v, \underline i')})_0\longrightarrow H^0(Z(w, \underline i), T_{(w, \underline i)})_0 
  \longrightarrow H^1(v, \alpha_{i_{r+1}})_0 \cdots $$

  First assume that there exists $1\leq j \leq r$ such that $\alpha_{i_j}=\alpha_{i_{r+1}}$, 
  so that $d(v)=d(w)$. 
 By Lemma \ref{1}, we have $H^0(v, \alpha_{i_{r+1}})_0=0$. 
   Hence $$d(v)= dim(H^0(Z(v, \underline i'), T_{(v, \underline i')}))_0) \leq 
   dim (H^0(Z(w, \underline i), T_{(w, \underline i)})_0 )\leq d(w).$$
  
  Since $d(w)=d(v)$,  we have 
   $$dim (H^0(Z(v, \underline i'), T_{(v, \underline i')})_0) =d(v)=d(w) =d
   im (H^0(Z(w, \underline i), T_{(w, \underline i)})_0).$$
  Hence, by the above exact sequence, we conclude that 
   $\{f_w(h_{\alpha_{i_j}}): j\in supp(w)\}$ forms a basis of $H^0(Z(w, \underline i), T_{(w,\underline i)})_{0}$.
  
 Otherwise $d(w)=d(v)-1$ and by Lemma \ref{1}(2), we see that $H^0(v, \alpha_{i_{r+1}})_0=\mathbb C.h_{\alpha_{i_{r+1}}}$.
 By using the above exact sequence, 
 we see that $$dim (H^0(Z(w, \underline i), T_{(w, \underline i)})_0)\geq d(v)-1.$$
 Since $dim (H^0(Z(w, \underline i), T_{(w, \underline i)})_0) \leq d(w)$, we conclude that
 $$dim (H^0(Z(w, \underline i), T_{(w, \underline i)})_0)=d(w)$$ and hence
    $\{f_w(h_{\alpha_{i_j}}): j\in supp(w)\}$ forms a basis of $H^0(Z(w, \underline i), T_{(w,\underline i)})_{0}$.
 This completes the proof of (1) and (2).
 
Proof of (3):

By Lemma \ref{l3}(2),
$f_{w_0}:\mathfrak p_{\alpha_{i_1}}\longrightarrow H^0(Z(w_0, 
 \underline j), T_{(w_0, \underline j)})$ is an injective homomorphism of Lie algebras.
 By Corollary \ref{C1}(1),  $H^0(Z(w_0, 
 \underline j), T_{(w_0, \underline j)})$ is a Lie subalgebra of $\mathfrak g$.
 Hence, we have $$H^0(Z(w_0, \underline j), T_{(w_0, \underline j)})_0=\mathfrak h.$$

Let $u=s_{{i_1}}s_{{i_2}}\cdots s_{{i_{r-1}}}$ and $\underline i'=(i_1,i_2,\ldots, i_{r-1})$.
Note that $l(u)=l(w)-1$. 

Consider the homomorphism $f:H^0(Z(w, \underline i), T_{(w, \underline i)})\longrightarrow H^0(Z(u, \underline i'), T_{(u, \underline i')})$ of Lie algebras
induced by the $\mathbb P^1$-fibration $f_r:Z(w, \underline i) \longrightarrow Z(u,\underline i')$
as in Section $2$.
By {\it LES}, $Ker(f)= H^0(w, \alpha_{i_r})$.

Note that by Lemma \ref{l1}(1), 
\begin{equation}
 H^0(w, \alpha_{i_r})_{\mu}=0 ~ unless ~\mu\leq \alpha_{i_r}. 
 \end{equation}

Case 1: If $s_{i_r}\leq u$, then  by Lemma \ref{1}(1), $H^0(w, \alpha_{i_r})_0=0$. 
Hence by Corollary \ref{cor1} and Lemma \ref{l1}(2), we conclude that $H^0(w, \alpha_{i_r})_{\mu}=0$
unless $\mu\in R^-$.
Since for every $\beta \in R^+$, $ad(x_{-\beta})^r=0$ in $H^0(Z(w, \underline i), T_{(w, \underline i)})$
for some $r\in \mathbb N$ (since for every positive root $\alpha$, there is a $r\in \mathbb N$ such that 
$\alpha+k\beta \notin R$ for all $k\geq r$), we conclude that 
every element of $H^0(w, \alpha_{i_r})\subseteq H^0(Z(w, \underline i), T_{(w, \underline i)})$ is nilpotent.

Case 2:
Assume that $s_{i_r}\nleq u$.

Sub case (a): If $\langle \alpha_{i_j}, \alpha_{i_r} \rangle \neq 0$ for some $1\leq j \leq r-1$, then
by Corollary \ref{cor1}(1), we have $H^0(w, \alpha_{i_r})_{\alpha_{i_r}}=0$.
Hence by (6.1),  we have $H^0(w, \alpha_{i_r})_{\mu}=0$ unless $\mu \leq 0$. Therefore, again
by Lemma \ref{l1}(2) $H^0(w, \alpha_{i_r})_{\mu}=0$ unless $\mu \in R^-\cup \{0\}$.
Further, by Lemma \ref{1}, $H^0(w, \alpha_{i_r})_0=\mathbb C.h_{\alpha_{i_r}}$.
Hence, a maximal toral subalgebra of $H^0(w, \alpha_{i_r})\subseteq H^0(Z(w, \underline i), T_{(w, \underline i)})$ lies in
$\mathbb C.h_{\alpha_{i_r}}\oplus \mathbb C_{-\alpha_{i_r}}$ and so it is one-dimensional.

Sub case (b): If $\langle \alpha_{i_j}, \alpha_{i_r} \rangle = 0$ for all $1\leq j \leq r-1$,
then by Corollary \ref{commuting}, we have  $$H^0(w, \alpha_{i_r})\simeq sl_{2, \alpha_{i_r}}.$$
Hence, any maximal toral subalgebra of the ideal $H^0(w, \alpha_{i_r}) \subseteq H^0(Z(w, \underline i), T_{(w, \underline i)})$ 
lies in $sl_{2, \alpha_{i_r}}$ and so  it is one-dimensional.

Hence, it follows that $$f_w(\mathfrak h)\cap Ker(f)=Ker(f)_0=H^0(w, \alpha_{i_r})_0$$ is a maximal toral
subalgebra of $Ker(f)$ and its dimension is at most one.

By induction on $l(w)$ and by (1), $f_u(\mathfrak h)=H^0(Z(u, \underline i'), T_{(u, \underline i')})_0$ is a maximal toral subalgebra 
of $H^0(Z(u, \underline i'), T_{(u, \underline i')})$.

 Now, consider the following commutative diagram of Lie algebras:
 \begin{center}  
  $\xymatrix{H^{0}(Z(w_0, \underline j) , T_{(w_0 , \underline j)}) \ar[d]_{f_w} \ar[dr]^{f_u} &\\
H^0(Z(w, \underline i), T_{(w, \underline i)})\ar[r]^{f}
  &H^0(Z(u, \underline i'), T_{(u, \underline i')})
} $
\end{center}  
 
 Note that by commutativity of the above diagram and by (1), it follows that $f_w(\mathfrak h)$ is 
 an extension of $f_u(\mathfrak h)$ and $f_w(\mathfrak h)\cap Ker(f)$.
Thus, we conclude that $f_w(\mathfrak h) = H^0(Z(w, \underline i), T_{(w, \underline i)})_0$
is a maximal toral subalgebra of $H^0(Z(w, \underline i), T_{(w, \underline i)})$.
 This completes the proof of the proposition.

 \end{proof}

 Consider the restriction of the homomorphism $f_w:\mathfrak p_{\alpha_{i_1}}\longrightarrow
 H^0(Z(w, \underline i), T_{(w, \underline i)})$(as in Lemma \ref{l3}) to $\mathfrak b$ and denote it also by $f_w$.

\begin{lemma}\label{Borel}
The homomorphism $f_w: \mathfrak b \longrightarrow H^0(Z(w,\underline i), T_{(w, \underline i)})$
  is injective if and only if $w^{-1}(\alpha_0)<0$.
\end{lemma}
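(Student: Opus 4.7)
The plan is to reduce injectivity of $f_w$ to the single question of whether $f_w(x_{-\alpha_0})\neq 0$, and then to answer this by comparing $f_w$ with the natural map induced by the $B$-action on the Schubert variety $X(w)$ and invoking Theorem \ref{AS}(2). First I would observe that $f_w$ is a $B$-module homomorphism (by Lemma \ref{l3}), so its kernel is a $B$-submodule of $\mathfrak b$. By Lie--Kolchin, if this kernel is nonzero it must contain a $B$-eigenvector of $\mathfrak b$. A $B$-eigenvector in $\mathfrak b$ is a $T$-weight vector annihilated by the unipotent radical of $B$ (whose root spaces are $\mathfrak g_{-\beta}$ for $\beta\in R^+$); no nonzero element of $\mathfrak h$ is annihilated, and among the negative-root vectors $x_{-\gamma}$ with $\gamma\in R^+$ only $x_{-\alpha_0}$ is, because the condition $-\gamma-\beta\notin R$ for every $\beta\in R^+$ forces $\gamma=\alpha_0$. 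Hence the unique $B$-stable line of $\mathfrak b$ is $\mathfrak g_{-\alpha_0}$, and $f_w$ is injective if and only if $f_w(x_{-\alpha_0})\neq 0$. The same reasoning shows that the unique $B$-stable line of $\mathfrak g$ is also $\mathfrak g_{-\alpha_0}$.

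Second, I would identify the composite
\[
\mathfrak b\xrightarrow{\,f_w\,} H^0(Z(w,\underline i),T_{(w,\underline i)})\hookrightarrow H^0(w,\mathfrak g/\mathfrak b)=H^0(X(w),T_{G/B})
\]
with the natural restriction map $\mathfrak b\hookrightarrow\mathfrak g=H^0(G/B,T_{G/B})\to H^0(X(w),T_{G/B})$ arising from the $G$-action on $G/B$. This uses the $B$-equivariance of $\phi_w:Z(w,\underline i)\to X(w)$ together with the definition of $f_w$ via the $P_{\alpha_{i_1}}$-action on $Z(w,\underline i)$, whose restriction to $B\subset P_{\alpha_{i_1}}$ is left multiplication and hence compatible with left multiplication by $B$ on $X(w)\subset G/B$. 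Consequently, $f_w(x_{-\alpha_0})=0$ if and only if $x_{-\alpha_0}$ lies in the kernel of the restriction $\mathfrak g\to H^0(X(w),T_{G/B})$.

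Finally I would invoke Theorem \ref{AS}(2). If $w^{-1}(\alpha_0)<0$, the theorem gives that $\mathfrak g$ embeds as a $B$-submodule in $H^0(X(w),T_{G/B})$, so in particular $x_{-\alpha_0}$ has nonzero image and $f_w$ is injective. If instead $w^{-1}(\alpha_0)>0$ (the only other possibility, since the Weyl element $w^{-1}$ takes the nonzero vector $\alpha_0$ to a nonzero vector), then by Theorem \ref{AS}(2) the restriction has a nonzero kernel, which is a $B$-submodule of $\mathfrak g$; by the uniqueness established in the first paragraph it must contain $x_{-\alpha_0}$, so $f_w(x_{-\alpha_0})=0$ and $f_w$ is not injective. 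The delicate point requiring careful justification is the identification of the two maps in the second paragraph; once that compatibility is granted, the rest of the proof is a direct application of Lie--Kolchin together with Theorem \ref{AS}(2).
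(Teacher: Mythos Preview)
Your proof is correct and shares its opening reduction with the paper: both observe that $\ker f_w$ is a $B$-submodule of $\mathfrak b$, whose unique $B$-stable line is $\mathfrak g_{-\alpha_0}$, so injectivity of $f_w$ is equivalent to $f_w(x_{-\alpha_0})\neq 0$. From there the two arguments diverge.

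You identify the composite $\mathfrak b\to H^0(Z(w,\underline i),T_{(w,\underline i)})\hookrightarrow H^0(X(w),T_{G/B})$ with the restriction of the $G$-action map $\mathfrak g\to H^0(X(w),T_{G/B})$ and then invoke Theorem~\ref{AS}(2) from \cite{Ka4} for both directions: if $w^{-1}(\alpha_0)<0$ that map is injective, otherwise its nonzero kernel, being a $B$-submodule of $\mathfrak g$, must contain $\mathfrak g_{-\alpha_0}$. The paper instead argues each direction directly. For the forward implication it uses the exact sequence $0\to H^0(w,\mathfrak b)\to\mathfrak g\to H^0(w,\mathfrak g/\mathfrak b)\to H^1(w,\mathfrak b)\to 0$, shows $H^1(w,\mathfrak b)_{-\alpha_0}=0$, and deduces $H^0(w,\mathfrak b)_{-\alpha_0}=0$, which forces $w^{-1}(\alpha_0)<0$. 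For the converse it works at the group level: if $w^{-1}(\alpha_0)<0$ then $U_{-\alpha_0}$ moves the point $wB/B$ inside the open Bruhat cell, and since $\phi_w$ is birational and $B$-equivariant this gives $U_{-\alpha_0}\cap\ker\psi_w=\{e\}$, hence $(\ker f_w)_{-\alpha_0}=0$. Your route is shorter and more conceptual, but treats Theorem~\ref{AS}(2) as a black box; the paper's proof is more self-contained and makes the underlying geometry (the action on the big cell) explicit. The identification you flag as delicate is precisely the $B$-equivariance of $\phi_w$, which the paper also relies on in its converse argument.
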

\begin{proof}
Assume that  $f_w$ is injective.
Since  $ H^0(Z(w, \underline i), T_{(w, \underline i)})$ is a $B$-submodule of 
$H^0(w, \mathfrak g/ \mathfrak b)$,
 we have $ H^0(w, \mathfrak g/ \mathfrak b)_{-\alpha_0}\neq 0$.

  Recall from the proof the Lemma \ref{torus}, the following exact sequence of $B$-modules:
 $$0\longrightarrow H^0(w, \mathfrak b)\longrightarrow \mathfrak g \longrightarrow 
 H^0(w, \mathfrak g/\mathfrak b)\longrightarrow H^1(w, \mathfrak b)\longrightarrow 0$$

Note that if $G$ is simply laced, by \cite[Lemma 3.4]{Ka4} $H^1(w, \mathfrak b)=0$. 
If $G$ is non simply laced, since $-\alpha_0$ is a long root by  \cite[Lemma 4.8(2)]{Ka4},
we have $H^1(w, \mathfrak b)_{-\alpha_0}=0$. 
Hence, we have the following short exact sequence of $T$-modules:
 $$0\longrightarrow H^0(w, \mathfrak b)_{-\alpha_0}\longrightarrow \mathfrak
 g_{-\alpha_0}\longrightarrow H^0(w, \mathfrak g/\mathfrak b)_{-\alpha_0}\longrightarrow 0$$  

Since $dim(\mathfrak g_{-\alpha_0})=1$, $H^0(w, \mathfrak b)_{-\alpha_0}=0$.
  Hence, we have $w^{-1}(\alpha_0)<0$. 
 
 Now we prove the converse.
 
 Let $\psi_w:B\longrightarrow Aut^0(Z(w, \underline i))$ be the homomorphism of algebraic groups 
 induced by the action of $B$ on $Z(w, \underline i)$(as in the proof of Lemma \ref{l3}).
 Let $K$ be the kernel of $\psi_w$.
 Since $$BwB/B=\prod_{\beta \in R^+(w)}U_{-\beta}wB/B$$ (see \cite[Section 13.1]{Jan}) and 
 $w^{-1}(\alpha_0)<0$, we have $$U_{-\alpha_0}wB/B\neq wB/B.$$
 
 Since the desingularization map $\phi_w: Z(w, \underline i)\longrightarrow X(w)$ is 
 $B$-equivariant and the restriction of $\phi_w$ to an open subset is an isomorphism onto $BwB/B$,
   we have  $U_{-\alpha_0}\cap  K=\{e\}$, where $e$ is identity element in $B$.

   Recall that $f_w:\mathfrak b\longrightarrow H^0(Z(w, \underline i), T_{(w, \underline i)})$ is
   the homomorphism of Lie algebras induced by $\psi_w$.
 Since $U_{-\alpha_0}\cap K=\{e\}$, we have  $$(Ker (f_w))_{-\alpha_0}=0.$$
Since $Ker (f_w)$ is a $B$-submodule of $\mathfrak b$  and $\mathfrak b$ has a unique 
$B$-stable line $\mathfrak g_{-\alpha_0}$, we have  $Ker (f_w) =0$. 
 Hence $f_w$ is injective.  
  \end{proof}

The following  proposition describes the set of all positive roots occurring as a weight in 
$H^0(Z(w,\underline i), T_{(w,\underline i)})$.

\begin{proposition} \label{prop2} Let $w \in W$ and fix a reduced expression 
$w=s_{{i_1}}s_{{i_2}}\cdots s_{{i_r}}$.
 Let $\mu \in M_{\geq 0}\setminus \{0\}$. Then, we have 
 \begin{enumerate} 
 \item
    $H^0(Z(w,\underline i), T_{(w,\underline i)})_{\mu}\neq 0$ if and only if  there exists an integer $1\leq j \leq r$ 
 such that $\langle \alpha_{i_j}, \alpha_{i_k} \rangle = 0$ for all $1\leq k \leq j-1$, 
 and $\mu=\alpha_{i_j}.$ 

 \item Fix $1 \leq j\leq r$ such that $\langle \alpha_{i_j}, \alpha_{i_k} \rangle = 0$ for all $1\leq k \leq j-1$. Then, we have $dim (H^0(Z(w,\underline i), T_{(w,\underline i)})_{\alpha_{i_j}})=1$ 
 and $sl_{2, \alpha_{i_j}}$ is a $B_{\alpha_{i_j}}$-submodule of $H^0(Z(w,\underline i), T_{(w,\underline i)})$.
\end{enumerate}
 \end{proposition}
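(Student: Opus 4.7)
The strategy is induction on $l(w)$, treating (1) and (2) simultaneously. The base case $l(w)=0$ is vacuous. For the inductive step, set $v = s_{i_1}\cdots s_{i_{r-1}}$, $\underline i' = (i_1,\ldots,i_{r-1})$, and use the long exact sequence \textit{LES} arising from the relative tangent bundle sequence $(rel)$ along $f_r : Z(w, \underline i) \to Z(v, \underline i')$:
\[
0 \to H^0(w, \alpha_{i_r}) \to H^0(Z(w, \underline i), T_{(w, \underline i)}) \xrightarrow{\pi} H^0(Z(v, \underline i'), T_{(v, \underline i')}) \to H^1(w, \alpha_{i_r}) \to \cdots .
\]

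For the forward direction of (1), a nonzero weight $\mu \in M_{\geq 0}\setminus\{0\}$ of $H^0(Z(w, \underline i), T_{(w, \underline i)})$ arises either in $H^0(w, \alpha_{i_r})$, whence Corollary \ref{cor1}(1) forces $\mu = \alpha_{i_r}$ with the commutation property at $j=r$, or in $\pi$'s image inside $H^0(Z(v, \underline i'), T_{(v, \underline i')})$, where the inductive hypothesis supplies some $j < r$ with the commutation property at $j$. For the reverse direction, the case $j = r$ is handled by Corollary \ref{cor1}(2), which embeds $sl_{2, \alpha_{i_r}}$ into $H^0(Z(w, \underline i), T_{(w, \underline i)})$ with a one-dimensional $\alpha_{i_r}$-weight space; the case $j < r$ combines the inductive one-dimensionality of $H^0(Z(v, \underline i'), T_{(v, \underline i')})_{\alpha_{i_j}}$ with the vanishing $H^1(w, \alpha_{i_r})_{\alpha_{i_j}} = 0$ from Corollary \ref{lemma3} to give surjectivity of $\pi$ on this weight space, while injectivity follows because $H^0(w, \alpha_{i_r})_{\alpha_{i_j}} = 0$: otherwise Corollary \ref{cor1}(1) would demand $\alpha_{i_j} = \alpha_{i_r}$ together with $\langle \alpha_{i_r}, \alpha_{i_j}\rangle = 0$, contradicting $(\alpha_{i_j}, \alpha_{i_j}) > 0$. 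A mirror argument (using induction to show $H^0(Z(v, \underline i'), T_{(v, \underline i')})_{\alpha_{i_r}} = 0$ when $r$ has the commutation property) rules out extra contributions in the $j = r$ case, yielding dimension exactly one.

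For (2), the case $j = r$ is immediate from Corollary \ref{cor1}(2). For $j < r$, pick a nonzero $\tilde v \in H^0(Z(w, \underline i), T_{(w, \underline i)})_{\alpha_{i_j}}$ (available from (1)), and let $M$ be the $B_{\alpha_{i_j}}$-submodule it generates. Being cyclic, $M$ is indecomposable, so by Lemma \ref{lemma2.4}, $M \cong V' \otimes \mathbb C_\lambda$ for some irreducible $\widetilde L_{\alpha_{i_j}}$-module $V'$; its weights form a descending chain $\alpha_{i_j}, 0, -\alpha_{i_j}, -2\alpha_{i_j}, \ldots$ with one-dimensional weight spaces. For the upper bound $\dim M \leq 3$: since $H^0(Z(w, \underline i), T_{(w, \underline i)}) \subseteq H^0(w, \mathfrak g/\mathfrak b)$ and the weights of $\mathfrak g/\mathfrak b$ are positive roots, Lemma \ref{l1}(2) restricts all weights to $R\cup\{0\}$; as $-2\alpha_{i_j} \notin R$, the chain terminates at $-\alpha_{i_j}$. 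For the lower bound $\dim M \geq 3$: the image $\pi(\tilde v)$ is nonzero by the isomorphism established in (1) and lies inside the copy of $sl_{2, \alpha_{i_j}}$ supplied by the inductive hypothesis of (2); a direct bracketing computation with $x_{-\alpha_{i_j}}$ shows that the cyclic $B_{\alpha_{i_j}}$-submodule of $sl_{2, \alpha_{i_j}}$ generated by an $\alpha_{i_j}$-weight vector is all of $sl_{2, \alpha_{i_j}}$, so $\pi(M) = sl_{2, \alpha_{i_j}}$ is three-dimensional. Consequently $\dim M = 3$ and $M \cong sl_{2, \alpha_{i_j}}$.

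The main obstacle is the upper bound $\dim M \leq 3$, because the structure theory (Lemma \ref{lemma2.4}) on its own allows indecomposable $B_{\alpha_{i_j}}$-modules $V' \otimes \mathbb C_\lambda$ with highest weight $\alpha_{i_j}$ and arbitrary length. The key input is the embedding into $H^0(w, \mathfrak g/\mathfrak b)$ combined with Lemma \ref{l1}(2): since $\mathfrak g/\mathfrak b$ has only positive-root weights, the propagation property of being in $R\cup\{0\}$ is preserved under $H^0(w,-)$, and this is precisely what truncates the $B_{\alpha_{i_j}}$-chain at weight $-\alpha_{i_j}$ and forces $M \cong sl_{2,\alpha_{i_j}}$.
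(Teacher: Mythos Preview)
Your proof is correct and follows essentially the same inductive scheme as the paper: the \textit{LES} along $f_r$, Corollary~\ref{cor1} for the kernel term $H^0(w,\alpha_{i_r})$, Corollary~\ref{lemma3} for the vanishing of $H^1(w,\alpha_{i_r})_{\alpha_{i_j}}$, and induction on $l(w)$.

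The one place you diverge slightly is in part~(2), case $j<r$. The paper simply observes that $x_{\alpha_{i_j}}$ lies in the image of $\pi$ and that $sl_{2,\alpha_{i_j}}$ is the cyclic $B_{\alpha_{i_j}}$-module it generates, and then asserts that $sl_{2,\alpha_{i_j}}$ is a $B_{\alpha_{i_j}}$-submodule of $H^0(Z(w,\underline i),T_{(w,\underline i)})$. You instead take the cyclic submodule $M$ generated by a preimage $\tilde v$, bound $\dim M\geq 3$ by surjecting onto $sl_{2,\alpha_{i_j}}$ via $\pi$, and bound $\dim M\leq 3$ by embedding into $H^0(w,\mathfrak g/\mathfrak b)$ and invoking Lemma~\ref{l1}(2) to rule out the weight $-2\alpha_{i_j}\notin R\cup\{0\}$. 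This is a genuine clarification: the paper's sentence leaves unexplained why the preimage of $sl_{2,\alpha_{i_j}}$ cannot be a strictly larger indecomposable $B_{\alpha_{i_j}}$-module, and your use of Lemma~\ref{l1}(2) is exactly the missing ingredient.
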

\begin{proof}

Proof of (1): Assume that $H^0(Z(w,\underline i), T_{(w,\underline i)})_{\mu}\neq 0$.
Let $v=s_{{i_1}}s_{{i_2}}\cdots s_{{i_{r-1}}}$ and let $i'=(i_1,i_2, \ldots, i_{r-1})$.
 By using $\it{LES}$ and Lemma \ref{vanishing}, we have the following exact sequence of 
 $B$-modules:\\
 $0\longrightarrow H^0(w, \alpha_{i_r})\longrightarrow H^0(Z(w,\underline i), T_{(w,\underline i)})
 \longrightarrow H^0(Z(v,\underline i'), T_{(v, \underline i')})\longrightarrow 
 H^1(w, \alpha_{i_r}) 
 \longrightarrow \\ H^1(Z(w,\underline i), T_{(w, \underline i)})\longrightarrow H^1(Z(v, 
 \underline i'), T_{(v,\underline i')})
 \longrightarrow 0$.
 
 Since $H^0(Z(w,\underline i), T_{(w,\underline i)})_{\mu}\neq 0$, either 
 $H^0(w, \alpha_{i_r})_{\mu}\neq 0$ or $H^0(Z(v, \underline i'), T_{(v,\underline i')})_
 {\mu}\neq 0$.

Now, if $H^0(w, \alpha_{i_r})_{\mu}\neq 0$, then by Corollary \ref{cor1}, we are done.

Otherwise, we have $H^0(Z(v,\underline i'), T_{(v,\underline i')})_{\mu}\neq 0$. 
Then by the induction on $l(w)$, there exists $1\leq j \leq r-1$ 
 such that $\langle \alpha_{i_j}, \alpha_{i_k} \rangle = 0$ for all $1\leq k \leq j-1$ and
 $\mu=\alpha_{i_j}.$ 
 
 We now prove the other implication: 
 
Let $1\leq j\leq r$ be such that $\langle \alpha_{i_j}, \alpha_{i_k} \rangle = 0$ for all 
$1\leq k \leq j-1$.

   If $j=r$, then $\langle \alpha_{i_k}, \alpha_{i_r} \rangle =0$ for all $1 \leq k\leq r-1$. 
   By Corollary \ref{cor1}, we have
 $$H^0(w, \alpha_{i_r})_{\alpha_{i_r}}\neq 0.$$ 
 Hence, we conclude that  $$H^0(Z(w,\underline i), T_{(w,\underline i)})_{\alpha_{i_r}}\neq 0.$$
 
  Otherwise, by Corollary \ref{cor1}, we have $H^0(w, \alpha_{i_r})_ {\alpha_{i_j}}=0$ and by 
  Corollary \ref{lemma3},
 we have $ H^1(w, \alpha_{i_{r}})_{\alpha_{i_j}}=0$. By the above exact sequence, 
we get $$H^0(Z(w,\underline i), T_{(w,i)})_{\alpha_{i_j}}\simeq H^0(Z(v,\underline i'), 
T_{(v, \underline i')})_{\alpha_{i_j}}.$$
Now the proof follows by induction on $l(w)$.

 Proof of (2):  
 Fix $1\leq j \leq r$.  Assume that  $\langle \alpha_{i_j}, \alpha_{i_k} \rangle = 0$ for all $1\leq k \leq j-1$. 
Then, by (1), we have  $H^0(Z(w, \underline i), T_{(w, \underline i)})_{\alpha_{i_j}}\neq 0$.

Let $v=s_{{i_1}}s_{{i_2}}\cdots s_{{i_{r-1}}}$ and $i'=(i_1,i_2, \ldots, i_{r-1})$.

If  $j=r$, then by Corollary \ref{cor1} we have $H^0(w, \alpha_{i_r}) \simeq sl_{2, \alpha_{i_r}}$.
Also, by using (1), we see that $H^0(Z(v, \underline i'), T_{(v, \underline i')})_
{\alpha_{i_r}}=0$.
Hence, by the above exact sequence, we conclude that $dim(H^0(Z(w, \underline i), T_{(w, \underline i)})_{\alpha_{i_r}})=1$
and $sl_{2, \alpha_{i_r}}$ is a $B_{\alpha_{i_r}}$-submodule of $H^0(Z(w,\underline i), T_{(w,\underline i)})$.

On the other hand, if $j\neq r$ then by induction on $l(w)$, $$dim(H^0(Z(v, \underline i'), T_{(v, \underline i')})_
{\alpha_{i_j}})=1$$ and $sl_{2, \alpha_{i_j}}$ is a $B_{\alpha_{i_j}}$-submodule of $H^0(Z(v,\underline i'), T_{(v,\underline i')})$.
Note that by Corollary \ref{cor1}, we have $H^0(w, \alpha_{i_r})_{\alpha_{i_j}}=0$.
Also, by Corollary \ref{lemma3}, we have $H^1(w, \alpha_{i_r})_{\alpha_{i_j}}=0$. 
Hence, by the above exact sequence, we see that
 $$H^0(Z(w, \underline i), T_{(w, \underline i)})_{\alpha_{i_j}}\simeq
H^0(Z(v, \underline i'), T_{(v, \underline i')})_{\alpha_{i_j}}$$ and
$dim(H^0(Z(w, \underline i), T_{(w, \underline i)})_{\alpha_{i_j}})=1.$
 
 
Further, since $sl_{2, \alpha_{i_j}}$ is a cyclic $B_{\alpha_{i_j}}$-module generated by $x_{\alpha_{i_j}}$, it follows that 
$x_{\alpha_{i_j}}$ is in the image of the map $H^0(Z(w, \underline i), T_{(w, \underline i)}) \longrightarrow
H^0(Z(v, \underline i'), T_{(v, \underline i')})$. Thus, we conclude  that $sl_{2, \alpha_{i_j}}$ is a $B_{\alpha_{i_j}}$-submodule of 
$H^0(Z(w, \underline i), T_{(w, \underline i)})$. 
\end{proof}

 \begin{proposition} \label{Borel1} Let $w\in W$ and $w=s_{{i_1}}s_{{i_2}}\cdots s_{{i_r}}$
be a reduced expression $\underline i$ of $w$. Then,  $H^0(Z(w, \underline i), T_{(w, \underline i)})$ contains a Lie subalgebra $\mathfrak b'$
  isomorphic to $\mathfrak b$ if and only if $w^{-1}(\alpha_{0})<0$.
 \end{proposition}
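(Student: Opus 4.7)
The ``if'' direction is immediate from Lemma \ref{Borel}: if $w^{-1}(\alpha_0)<0$, then the map $f_w:\mathfrak b\to H^0(Z(w,\underline i),T_{(w,\underline i)})$ is injective, so $\mathfrak b':=f_w(\mathfrak b)$ is a Lie subalgebra isomorphic to $\mathfrak b$.

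For the ``only if'' direction, assume that $\mathfrak b'\subseteq H^0(Z(w,\underline i),T_{(w,\underline i)})$ is a Lie subalgebra with $\mathfrak b'\cong\mathfrak b$. My plan is to reduce to Lemma \ref{Borel} by producing a non-zero vector of $T$-weight $-\alpha_0$ in $H^0(Z(w,\underline i),T_{(w,\underline i)})$. Indeed, once such a vector is available, the short exact sequence of $B$-modules
\[0\to H^0(w,\mathfrak b)\to\mathfrak g\to H^0(w,\mathfrak g/\mathfrak b)\to H^1(w,\mathfrak b)\to 0,\]
together with $H^1(w,\mathfrak b)_{-\alpha_0}=0$ (Lemma \ref{l2} combined with \cite[Lemma~4.8(2)]{Ka4}), the inclusion $H^0(Z(w,\underline i),T_{(w,\underline i)})\subseteq H^0(w,\mathfrak g/\mathfrak b)$, and $\dim\mathfrak g_{-\alpha_0}=1$ force $H^0(w,\mathfrak b)_{-\alpha_0}=0$, which, as observed in the proof of Lemma \ref{Borel}, is equivalent to $w^{-1}(\alpha_0)<0$.

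To produce this weight vector, I would first conjugate $\mathfrak b'$ into $T$-stable position. Since $\mathfrak b'$ is isomorphic to the Lie algebra of the solvable algebraic group $B$, it is algebraic and integrates to a connected closed solvable subgroup $B'\subseteq Aut^0(Z(w,\underline i))$; a maximal torus of $B'$ has dimension $n$, and by Lemma \ref{torus} the rank of $Aut^0(Z(w,\underline i))$ is at most $n$, so any such torus must in fact be a maximal torus of $Aut^0(Z(w,\underline i))$. By Proposition \ref{Prop0}(3), $f_w(\mathfrak h)$ is a maximal toral subalgebra of $H^0(Z(w,\underline i),T_{(w,\underline i)})$, i.e.\ the Lie algebra of the maximal torus $T_1:=\psi_w(T)$. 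Using conjugacy of maximal tori in the connected algebraic group $Aut^0(Z(w,\underline i))$, after replacing $B'$ by a suitable conjugate I may assume $T_1\subseteq B'$; then $\mathfrak b'$ is $T_1$-stable, hence $T$-stable via $\psi_w$. Choosing a Lie algebra isomorphism $\varphi:\mathfrak b\xrightarrow{\sim}\mathfrak b'$ that sends $\mathfrak h$ onto $f_w(\mathfrak h)$ (arranged by composing with an inner automorphism of $\mathfrak b'$), the element $\varphi(x_{-\alpha_0})$ is a joint ad-eigenvector of $f_w(\mathfrak h)$ inside $\mathfrak b'\subseteq H^0(Z(w,\underline i),T_{(w,\underline i)})$; analysing the induced linear automorphism $\sigma$ of $\mathfrak h^*$ and using that all $T$-weights of $H^0(Z(w,\underline i),T_{(w,\underline i)})$ lie in $R\cup\{0\}$ (Lemma \ref{l1}(2)) together with the restriction on positive-root weights from Proposition \ref{prop2}, $\sigma$ must preserve the positive system and hence be a diagram automorphism of $\mathfrak g$. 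Since the highest root $\alpha_0$ is fixed by every diagram automorphism, $\varphi(x_{-\alpha_0})$ has $T$-weight $-\alpha_0$, giving the required non-vanishing $H^0(Z(w,\underline i),T_{(w,\underline i)})_{-\alpha_0}\neq 0$.

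The step I expect to be the main obstacle is this final weight-matching argument: even after conjugating $\mathfrak b'$ into $T$-stable position, the identification between its Cartan and $\mathfrak h$ may twist weights by an arbitrary element of $\mathrm{GL}(\mathfrak h)$, and I must use the rigidity of the weight spaces of $H^0(Z(w,\underline i),T_{(w,\underline i)})$ — in particular the fact that its non-zero positive-root weights are restricted to simple roots obeying the commutation condition of Proposition \ref{prop2} — to force this twist to respect the root system up to a diagram automorphism and thereby fix the highest root.
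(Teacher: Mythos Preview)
Your ``if'' direction and the overall architecture of the ``only if'' direction --- integrate $\mathfrak b'$ to a subgroup, compare ranks via Lemma~\ref{torus} and Proposition~\ref{Prop0}(3), conjugate so that $\psi_w(T)$ lies in $B'$, and then decompose $\mathfrak b'$ into $T$-weight spaces --- agree with the paper. The divergence, and the genuine gap, is exactly where you flagged it: the claim that the induced linear map $\sigma$ on $\mathfrak h^*$ must be a diagram automorphism.

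That claim is false. Take $G$ of type $A_1$ and $w=s_\alpha$: then $H^0(Z(w,\underline i),T_{(w,\underline i)})\simeq sl_{2,\alpha}$, and $\mathfrak b':=\mathfrak h\oplus\mathfrak g_{\alpha}$ is a Lie subalgebra isomorphic to $\mathfrak b$, already $T$-stable. The resulting $\sigma$ is $-\mathrm{id}$ on $\mathfrak h$, i.e.\ the nontrivial Weyl element, not a diagram automorphism; it sends $-\alpha_0$ to $+\alpha_0$. Your weight constraints (Lemma~\ref{l1}(2) and Proposition~\ref{prop2}) only force $\sigma^*(R^-)\subseteq R^-\cup J(w,\underline i)$, which --- as you in fact show --- yields $\sigma\in W\rtimes\mathrm{Diag}$, not $\sigma\in\mathrm{Diag}$. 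So $\sigma^*(-\alpha_0)$ can be an arbitrary long root, and you cannot locate a $-\alpha_0$-weight vector this way.

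The paper repairs precisely this point, and does so by \emph{conjugating inside $Aut^0(Z(w,\underline i))$} rather than analysing $\sigma$. After your conjugation one has $Lie(B'')=\mathfrak h\oplus\bigoplus_{\beta\in R'}\mathfrak g_\beta\oplus\bigoplus_{\alpha\in S'}\mathfrak g_\alpha$ with $R'\subseteq R^-$ and $S'\subseteq J(w,\underline i)$; solvability gives $R'\cap(-S')=\emptyset$. The key extra input is Proposition~\ref{prop2}(2): for each $\alpha\in S'$ the whole $sl_{2,\alpha}$ sits in $H^0(Z(w,\underline i),T_{(w,\underline i)})$, so one can lift each $s_\alpha$ to an element $(\psi_w)_\alpha(\widetilde n_\alpha)$ of $Aut^0(Z(w,\underline i))$ normalising $\psi_w(T)$. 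Since the elements of $S'$ are pairwise orthogonal, the product $\prod_{\alpha\in S'}s_\alpha$ sends $S'$ to $-S'$ and keeps $R'\subseteq R^-\setminus(-S')$ inside $R^-$. Conjugating $B''$ by the corresponding product of lifts therefore forces all nonzero weights of the new $Lie(nB''n^{-1})$ to be negative; a dimension count then gives $Lie(nB''n^{-1})=\mathfrak b$, in particular $H^0(Z(w,\underline i),T_{(w,\underline i)})_{-\alpha_0}\neq 0$, and your final paragraph applies. In short, what is missing from your proposal is this Weyl-element conjugation step; once you insert it, the diagram-automorphism discussion becomes unnecessary.
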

 \begin{proof}
Recall from the proof of Lemma \ref{Borel}, 
$\psi_w: B\longrightarrow Aut^0(Z(w, \underline i))$ is the homomorphism of algebraic groups 
induced by the action of $B$ on $Z(w, \underline i)$ 
and $f_w:\mathfrak b \longrightarrow H^0(Z(w, \underline i), T_{(w, \underline i)})$ is the 
induced homomorphism of Lie algebras. 

Assume that $\mathfrak b'$ is a Lie subalgebra of $H^0(Z(w, \underline i), 
T_{(w, \underline i)})$ 
which is isomorphic to $\mathfrak b$, then
there exists a closed subgroup $B'$ of $Aut^0(Z(w, \underline i))$ such that $B'$ is 
isomorphic to $B$ and $Lie(B')=\mathfrak b'$.

Fix an isomorphism $g:B \longrightarrow B'$. Then, $g(T)(\simeq T)$ is a maximal torus in $B'$.
Hence, we have  $$rank(Aut^0(Z(w, \underline i)))\geq dim (T).$$
By Proposition \ref{Prop0}(3), $f_w(\mathfrak h)$ is a maximal toral subalgebra of $H^0(Z(w, \underline i),
T_{(w, \underline i)})$. Hence, $\psi_w (T)$ is a maximal torus in $Aut^0(Z(w, \underline i))$.
Thus, the restriction $\psi_w|_{T}: T\longrightarrow Aut^0(Z(w, \underline i))$ is injective.

Let $T'$ be a maximal torus of $B'$. Since any two maximal tori in $Aut^0(Z(w, \underline i))$ 
are conjugate, there exists a $\sigma \in Aut^0(Z(w, \underline i))$ such that 
$T =\sigma T' \sigma^{-1}$. Now, let $B'':=\sigma B' \sigma^{-1}$. Then, we have  $T\subset B''$.
Since $Lie(B'')$ is a $T$-stable Lie subalgebra of $H^0(Z(w, \underline i), T_{(w, \underline i)})$
, by Proposition \ref{prop2} we have $$Lie(B'')=\mathfrak h\oplus\bigoplus_{\beta \in R' }\mathfrak 
g_{\beta}\oplus\bigoplus_{\alpha \in S'}\mathfrak g_{\alpha}$$ for some subset $R'$ of $R^-$ and for 
some subset $S'$ of $S$. 

 Fix $\alpha\in S'$, Then, we have $H^0(Z(w, \underline i), T_{(w, \underline i)})_{
 \alpha}\neq 0$. Hence by Proposition \ref{prop2}, we have \\$dim(H^0(Z(w, \underline i), 
 T_{(w, \underline i)})_{\alpha})=1$.
 Thus, the homomorphism $f_w:\mathfrak b\longrightarrow H^0(Z(w, \underline i),
 T_{(w, \underline i)})$ extends to $\widetilde{f_w}: \mathfrak p_{\alpha}\longrightarrow 
 H^0(Z(w, \underline i), T_{(w, \underline i)})$ as $T$-modules such that $\widetilde{f_w}
 (\mathfrak g_{\alpha})\neq 0$.
 Let $\mathfrak l_{\alpha}\subseteq \mathfrak p_{\alpha}$ be the Lie algebra of $L_{\alpha}$. 
 Consider the restriction $(f_w)_{\alpha}$ of $\widetilde{f_w}$ to $\mathfrak l_{\alpha}$.
 Clearly, $(f_w)_{\alpha}$ is injective homomorphism of Lie algebras.
 Let $n_{\alpha}$ be a representative of the simple reflection $s_{\alpha}$ in $N_{G}(T)$, let 
 $(\psi_w)_{\alpha}:\widetilde L_{\alpha}\longrightarrow Aut^0(Z(w, \underline i))$ be the 
 homomorphism of algebraic groups induced by $f_w{_{\alpha}}$, where $\widetilde L_{\alpha}$ 
 is a simply connected covering of $L_{\alpha}$.
 Since $(f_{w})_{\alpha}$ is injective, $\widetilde n_{\alpha}\notin Ker ((\psi_w)_{\alpha})$, where
 $\widetilde n_{\alpha}$ is a lift of $n_{\alpha}$ in $\widetilde L_{\alpha}$.
 Note that $(\psi_{w})_{\alpha}(n_{\alpha})$ normalizes $T$ and hence $Ad((\psi_w)_{\alpha}(n_{\alpha}))
 (\mathfrak h)=\mathfrak h$.

 Since $Lie(B'')$ is solvable Lie subalgebra  and $\mathfrak g_{\alpha}\subseteq Lie(B'')$, 
 $\mathfrak g_{-\alpha}\nsubseteq Lie(B'')$ (otherwise, $sl_{2, \alpha}$ would be Lie 
 subalgebra of $Lie(B'')$).
Hence, we have $R'\cap (-S')=\emptyset$.

Note that by Proposition \ref{prop2}, if $\alpha\in S'$, then $\alpha=\alpha_{i_j}$ for some  
integer $1\leq j \leq r$ such that  $\langle \alpha_{i_j}, \alpha_{i_k} \rangle = 0$ for all 
$1\leq k \leq j-1$.
Hence,  the elements in $\{ s_{\alpha}: \alpha\in S' \}$ commute with each other.
Thus,  $(\prod_{\alpha\in S'}s_{\alpha})(\beta)=-\beta$ for every $\beta \in S'$. Further,
  since $R'\cap (-S')=\emptyset$, we have  $(\prod_{\alpha\in S'}s_{\alpha})(R')\subseteq R^{-}$.
  Let $n=\prod_{\alpha\in S'}(\psi_w)_{\alpha}(\widetilde n_{\alpha})$, where the product 
  is taken in some ordering.
Hence $$Lie(nB''n^{-1})=\mathfrak h\oplus\bigoplus_{\beta \in R''} \mathfrak g_{\beta}\oplus\bigoplus_
{\gamma\in S'} \mathfrak g_{\gamma},$$
where $R''=(\prod_{\alpha\in S'}s_{\alpha})(R')$. 
Note that for each $\alpha \in S'$, $s_{\alpha}(R')\cap(-S')=\emptyset$. 
Hence $R''\cap (-S')=\emptyset$.
Then,  $Lie(nB''n^{-1})\subseteq \mathfrak b$. 
Since $dim(\mathfrak b)=dim(Lie(nB''n^{-1}))$, we have $Lie(nB''n^{-1})=\mathfrak b$.

In particular, we have $H^0(Z(w, \underline i), T_{(w, \underline i)})_{-\alpha_0}\neq 0$. 
Since $H^0(Z(w, \underline i), T_{(w, \underline i)})$ is a $B$-submodule of $H^0(w, \mathfrak 
g/\mathfrak b)$, we have $H^0(w, \mathfrak g/\mathfrak b)_{-\alpha_0}\neq 0$.
Hence, we have $w^{-1}(\alpha_0)<0$.

 Proof of the converse follows from Lemma \ref{Borel}.
\end{proof}

\section{Automorphism group of $Z(w, \underline i)$:}

In this section, we study the automorphism group of a BSDH variety. 

Let $w\in W$ and fix a reduced expression $w=s_{{i_1}}s_{{i_2}}\cdots s_{{i_r}}$, 
let $\underline i =(i_1, i_2, \ldots, i_r).$

Recall that for any reduced expression $w_0=s_{{j_1}}s_{{j_2}}\cdots s_{{j_N}}$ of $w_0$ such that 
$\underline j=(j_1, j_2, \ldots, j_N)$ and $(j_1, j_2, \ldots, j_r)=\underline i$, there exits a 
natural homomorphism $$f_w: H^0(Z(w_0, \underline j), T_{(w_0, \underline j)})
\longrightarrow H^0(Z(w, \underline i), T_{(w, \underline i)})$$ of Lie algebras from Section $5$.

 Recall the following notation:
$$J^{'}(w, \underline i):=\{l\in \{1,2, \ldots, r\}:\langle \alpha_{i_l}, \alpha_{i_k} 
\rangle = 0~{\rm 
for ~all }~ k < l\}$$ $$J(w, \underline i):=\{\alpha_{i_l}: l\in J^{'}(w, \underline i)\}\subset 
S.$$ Note that
 the simple reflections $\{s_{i_j}: j\in J'(w, \underline i)\}$ commute with each other.
For each $\alpha$ in $J(w, \underline i)$, fix a representative $n_{\alpha}$ of $s_{\alpha}$ in 
$N_G(T)$ and let $P_{J(w, \underline i)}$
be the subgroup of $G$ generated by $B$ and $\{n_{\alpha}: \alpha \in J(w, \underline i)\}$.
Let $\mathfrak p_{J(w, \underline i)}$ be the Lie algebra of $P_{J(w, \underline i)}$.

Then, we have 

\begin{theorem}\label{theorem1}{\  }
\begin{enumerate} 
\item
 $\mathfrak p_{J(w_0, \underline i)}\simeq H^0(Z(w_0, \underline i), T_{(w_0, \underline i)})$.
 \item
  $\mathfrak p_{J(w, \underline i)}$ is isomorphic to a Lie subalgebra of  
  $H^0(Z(w, \underline i), T_{(w, \underline i)})$ if and only if $ w^{-1}(\alpha_0)<0$.
  In such a case, we have $\mathfrak p_{J(w, \underline i)}= \mathfrak p_{J(w_0, \underline j)}$
  for any reduced expression $w_0=s_{{j_1}}s_{{j_2}}\cdots s_{{j_N}}$ of $w_0$ such that 
$\underline j=(j_1, j_2, \ldots, j_N)$ and $(j_1, j_2, \ldots, j_r)=\underline i$.
\item
 If $G$ is simply laced, $\mathfrak p_{J(w, \underline i)}\simeq H^0(Z(w, \underline i), 
 T_{(w, \underline i)})$ if and only if $ w^{-1}(\alpha_0)<0$.
 In such a case, we have $\mathfrak p_{J(w_0, \underline j)}\simeq H^0(Z(w, \underline i), 
 T_{(w, \underline i)})$, where $\underline j$ is as in (2).
\item
 If $G$ is simply laced, $f_w: H^0(Z(w_0, \underline j), T_{(w_0, \underline j)})
\longrightarrow H^0(Z(w, \underline i), T_{(w, \underline i)})$ is surjective,
where $\underline j$ is as in (2).
 
 
\end{enumerate}
 \end{theorem}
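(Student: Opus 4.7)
My plan is to prove the four parts in the order (1), (4), (2), (3); once the first three are established, (3) falls out by a dimension chase. For (1), I combine Corollary \ref{C1} and Proposition \ref{prop2}: the former identifies $H^0(Z(w_0,\underline i), T_{(w_0,\underline i)})$ as a Lie subalgebra of $\mathfrak g$ containing $\mathfrak b$ as a Borel and $\mathfrak h$ as a maximal toral subalgebra; the latter pins its positive-root weights down to exactly the simple roots in $J(w_0,\underline i)$, each with multiplicity one, and places $sl_{2,\alpha}$ inside for every $\alpha \in J(w_0,\underline i)$. A dimension count then identifies this Lie subalgebra with $\mathfrak p_{J(w_0,\underline i)}$. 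For (4) I iterate the long exact sequence $\mathit{LES}$ along the $\mathbb P^1$-fibration tower
\[Z(w_0,\underline j) \longrightarrow Z(s_{j_1}\cdots s_{j_{N-1}},\underline j_{N-1}) \longrightarrow \cdots \longrightarrow Z(w,\underline i).\]
At each stage the obstruction to surjectivity on $H^0$ of the tangent bundle is $H^1(s_{j_1}\cdots s_{j_k},\alpha_{j_k})$, which vanishes by Lemma \ref{vanishing}(2) in the simply laced case; composing yields surjectivity of $f_w$.

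The main new work is (2). For the ``if'' direction assume $w^{-1}(\alpha_0)<0$. First I show $J(w,\underline i) = J(w_0,\underline j)$. Writing $\alpha_0=\sum_i c_i \alpha_i$ with $c_i>0$ (since $G$ is simple), if $\mathrm{supp}(w)\subsetneq S$ then $w$ lies in the Weyl subgroup $W_{\mathrm{supp}(w)}$ under which every $\alpha_k\notin \mathrm{supp}(w)$ retains a positive coefficient in $w^{-1}(\alpha_0)$, contradicting $w^{-1}(\alpha_0)<0$. So $\mathrm{supp}(w)=S$, and no index $l>r$ can lie in $J'(w_0,\underline j)$ since that would force $\alpha_{j_l}$ to be orthogonal to every simple root, impossible in a simple root system. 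Thus $\mathfrak p_{J(w_0,\underline j)} = \mathfrak p_{J(w,\underline i)}$. Next, using (1) to identify $H^0(Z(w_0,\underline j), T_{(w_0,\underline j)}) \simeq \mathfrak p_{J(w_0,\underline j)}$, I check injectivity of the Lie algebra homomorphism $f_w$ on this parabolic by weight: on $\mathfrak b$ by Lemma \ref{Borel}; on $\mathfrak h$ because $\dim f_w(\mathfrak h) = d(w) = n = \dim \mathfrak h$ by Proposition \ref{Prop0}; and on the one-dimensional space $\mathfrak g_\alpha$ for each $\alpha \in J(w,\underline i)$ via the identity $[f_w(x_{-\alpha}), f_w(x_\alpha)] = f_w(h_\alpha) \neq 0$. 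The ``only if'' direction is immediate, as any such embedding restricts to an embedding of $\mathfrak b$ and Lemma \ref{Borel} then forces $w^{-1}(\alpha_0)<0$.

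Finally (3) is a dimension chase. Surjectivity of $f_w$ from (4) gives $\dim H^0(Z(w,\underline i), T_{(w,\underline i)}) \leq \dim \mathfrak p_{J(w_0,\underline j)}$; the embedding from (2) together with the identification $\mathfrak p_{J(w_0,\underline j)} = \mathfrak p_{J(w,\underline i)}$ gives the reverse inequality. Hence $f_w$ restricts to a Lie algebra isomorphism $\mathfrak p_{J(w,\underline i)} \simeq H^0(Z(w,\underline i), T_{(w,\underline i)})$. The converse direction of (3) is immediate from Proposition \ref{Borel1}. The main obstacle I anticipate is the combinatorial step in (2): identifying $J(w,\underline i)$ with $J(w_0,\underline j)$ under $w^{-1}(\alpha_0)<0$, and then certifying that $\ker(f_w)$ is trivial on every weight space of the parabolic. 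Everything else is a reasonably direct application of the machinery developed in Sections 3--6.
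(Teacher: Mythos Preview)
Your proof is largely correct and the overall architecture matches the paper's, but the ``only if'' direction of (2) has a genuine gap. You write that any embedding of $\mathfrak p_{J(w,\underline i)}$ into $H^0(Z(w,\underline i),T_{(w,\underline i)})$ restricts to an embedding of $\mathfrak b$, and that Lemma~\ref{Borel} then forces $w^{-1}(\alpha_0)<0$. But Lemma~\ref{Borel} concerns the \emph{specific} homomorphism $f_w$ induced by the left $B$-action; it says nothing about an arbitrary abstract embedding of $\mathfrak b$. An abstract copy of $\mathfrak b$ inside $H^0(Z(w,\underline i),T_{(w,\underline i)})$ need not a priori coincide with $f_w(\mathfrak b)$, and passing from one to the other requires the conjugation argument carried out in Proposition~\ref{Borel1} (using that $f_w(\mathfrak h)$ is a maximal torus of $Aut^0$, conjugating the abstract Borel to one containing $T$, analysing its weight decomposition via Proposition~\ref{prop2}, and then reflecting by the commuting simple reflections in $J$). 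You invoke Proposition~\ref{Borel1} correctly for the converse of (3), so the fix here is simply to invoke it for (2) as well; but as written the step does not go through.

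Apart from this, your argument takes a couple of routes that differ from the paper in interesting ways. For the equality $J(w,\underline i)=J(w_0,\underline j)$ you give a direct combinatorial proof: $w^{-1}(\alpha_0)<0$ forces $\mathrm{supp}(w)=S$, whence any $l>r$ in $J'(w_0,\underline j)$ would make $\alpha_{j_l}$ orthogonal to every simple root, an impossibility. The paper instead first proves $f_w$ is injective on all of $H^0(Z(w_0,\underline j),T_{(w_0,\underline j)})$ (using that any nonzero $B$-submodule of $\mathfrak g$ contains the line $\mathfrak g_{-\alpha_0}\subset\mathfrak b$, so injectivity on $\mathfrak b$ suffices), and then reads off $J(w_0,\underline j)\subset J(w,\underline i)$ from Proposition~\ref{prop2}. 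Your argument is more elementary and self-contained; the paper's is slicker once Lemma~\ref{Borel} is in hand and avoids the combinatorics entirely. Similarly, for (3) the paper argues via the containment $H^0(Z(w,\underline i),T_{(w,\underline i)})\subset H^0(w,\mathfrak g/\mathfrak b)=\mathfrak g$ (Theorem~\ref{AS}(3)) together with Proposition~\ref{prop2}, while your dimension chase via (2) and (4) is an equally valid alternative.
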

\begin{proof}


Proof of (1): 
 By Lemma \ref{l3}(2),  $f_{w_0}:\mathfrak b\longrightarrow H^0(Z(w_0, \underline i), T_{(w_0, \underline i)})$ is 
 injective. Also, by Corollary \ref{C1}(1), $H^0(Z(w_0, \underline i), T_{(w_0, \underline i)})$ is Lie subalgebra of $\mathfrak g$.

By Proposition \ref{prop2},
any $\mu\in M_{\geq 0}\setminus\{0\}$ such that 
 $H^0(Z(w_0,\underline i), T_{(w_0,\underline i)})_{\mu}\neq 0$ is of the form 
  $\mu=\alpha_{i_j}$ for some $1\leq j\leq r$
 such that $\langle \alpha_{i_j}, \alpha_{i_k} \rangle = 0$ for all $1\leq k \leq j-1$.
Hence,
we conclude that  $H^0(Z(w_0, \underline i), T_{(w_0, \underline i)})$
is isomorphic to $\mathfrak p_{J(w_0, \underline i)}$.

Proof of (2): If 
$\mathfrak p_{J(w, \underline i)}$ is isomorphic to a Lie subalgebra of  $H^0(Z(w, \underline i),
T_{(w, \underline i)})$, then by Proposition \ref{Borel1}, we have $w^{-1}(\alpha_0)<0$.

Conversely, assume that $w^{-1}(\alpha_0)<0$.
Let  $w_0=s_{{j_1}}s_{{j_2}}\cdots s_{{j_N}}$ be a  reduced expression of $w_0$
such that $\underline i =(j_1, j_2, \ldots, j_r)$. Set $\underline j=(j_1,j_2,\ldots,j_N)$.
Clearly, $J(w, \underline i)\subset J(w_0, \underline j)$. Hence, we have $\mathfrak p_{J(w, \underline i)}
\subset \mathfrak p_{J(w_0, \underline j)}$.

Therefore, by using (1), $\mathfrak p_{J(w, \underline i)}$ is a Lie subalgebra of $H^0(Z(w_0, \underline j), T_{(w_0, \underline j)})$.

Now, recall the following commutative diagram of Lie algebras:

 \begin{center}  
  $\xymatrix{\mathfrak p_{J(w, \underline i)}  \ar@{<-_)}[d]  \ar@{^{(}->}[rr] && H^{0}(Z(w_0, \underline j) , T_{(w_0 , \underline j)}) \ar[d]^{f_w} \ar@{^{(}->}[r] &  \mathfrak g \\
\mathfrak b \ar[rr]^{f_w|_{\mathfrak b}} && H^0(Z(w, \underline i), T_{(w, \underline i)}) 
} $
\end{center}
(see Section 5).

  Since the unique $B$-stable line $\mathfrak g_{-\alpha_0}$
 in $H^{0}(Z(w_0, \underline j) , T_{(w_0 , \underline j)})$ lies in $\mathfrak b$, 
 by commutativity of the above diagram, we conclude that
  $f_w:H^{0}(Z(w_0, \underline j) , T_{(w_0 , \underline j)}) \longrightarrow H^0(Z(w, \underline i),
T_{(w, \underline i)}) $ is injective if and only if its restriction $f_w|_{\mathfrak b}$ to $\mathfrak b$ is injective.

Since $w^{-1}(\alpha_0)<0$, by  Lemma \ref{Borel}, $f_w|_{\mathfrak b}$ to $\mathfrak b$ is injective. Hence, by the above arguments, 
$$f_w:H^{0}(Z(w_0, \underline j) , T_{(w_0 , \underline j)}) \longrightarrow H^0(Z(w, \underline i),
T_{(w, \underline i)}) $$ is injective. Therefore,  $H^0(Z(w, \underline i),
T_{(w, \underline i)})_{\alpha}\neq 0$ for every $\alpha \in J(w_0, \underline j)$. Thus, we conclude that
$J(w_0, \underline j)=J(w, \underline i)$.

Proof of (3): 
If $G$ is  simply laced, 
by Theorem \ref{AS} (3), we have $H^0(w, \mathfrak g/\mathfrak b)=\mathfrak g$ if and only if 
$w^{-1}(\alpha_0)<0$. 
Recall from Section $5$ that $H^0(Z(w, \underline i), T_{(w, \underline i)})$ is a $B$-submodule of
$H^0(w,\mathfrak g/\mathfrak b )$. 
Hence, from the proof of (2), we conclude that $\mathfrak p_{J(w, \underline i)} \simeq 
H^0(Z(w, \underline i), T_{(w, \underline i)})$ if and only if $w^{-1}(\alpha_0)<0$. 

Proof of (4):  Proof is by descending induction on $l(w)$. 
If $w=w_0$, we are done. Otherwise, let $w_0=s_{{j_1}}s_{{j_2}}\cdots s_{{j_N}}$
 be  a reduced expression for $w_0$ such that $(j_1,j_2, \ldots, j_r)=\underline i$ and 
 $r\leq N-1$.
Let $v=s_{{j_1}}s_{{j_2}}\cdots s_{{j_{r+1}}}$ and let $\underline i'=(j_1, j_2, \ldots, 
j_{r+1})$. Note that $l(w)=l(v)-1$.
 
Since $G$ is simply laced, by using $\it{LES}$ and Lemma \ref{vanishing} (2) we have the 
following short exact sequence of $B$-modules:
 $$0\longrightarrow H^0(v, \alpha_{i_{r+1}})\longrightarrow H^0(Z(v, \underline i'), 
 T_{(v,\underline i')})\longrightarrow H^0(Z(w, \underline i), T_{(w, \underline i)})
 \longrightarrow H^1(v, \alpha_{i_{r+1}})=0.$$
 
 Consider the following commutative diagram of Lie algebras:
 \begin{center}  
  $\xymatrix{H^{0}(Z(w_0, \underline j) , T_{(w_0 , \underline j)}) \ar[dr]^{f_w} \ar[d]^{f_v}\\
   H^{0}(Z(v, \underline i') , T_{(v , \underline i')})
\ar[r] & 
H^0(Z(w, \underline i), 
T_{(w, \underline i)}) } $
\end{center}

 By  descending induction on $l(w)$, 
 $f_v: H^0(Z(w_0, \underline j), 
 T_{(w_0, \underline j)}) \longrightarrow H^0(Z(v, \underline i'), T_{(v, \underline i')})$ is
 surjective.
 By commutativity of the above diagram and by the above short exact sequence, we conclude that $f_w:H^0(Z(w_0, \underline j), T_{(w_0, \underline j)}) \longrightarrow H^0(Z(w, \underline i), T_{(w, \underline 
 i)})$ is surjective.
This completes the proof of (4).
\end{proof}



Recall that $\leq$ is the Bruhat-Chevalley ordering on $W$
and $supp(w):=\{j\in \{1,2,\ldots, n\} : s_j\leq w\}$, the support of $w$. For simplicity of notation 
we denote $supp(w)$ by $A_w$. For $j\in A_w$, let $n_j$ be a representative of 
$s_j$ in $N_G(T)$.
Let $P_{A_w}$ be the standard parabolic subgroup of $G$ containing $B$ and $\{n_j: j\in A_w\}.$ 
Let $\mathfrak p_{A_w}$ be the Lie algebra of $P_{A_w}$. 

Let $w=s_{{i_1}}s_{{i_2}}\cdots s_{{i_r}}$ be a reduced expression of $w$ and let $\underline i =(i_1, i_2, \ldots, i_r).$
Let $w_0=s_{{j_1}}s_{{j_2}}\cdots s_{{j_N}}$ be a reduced expression for $w_0$
such that $(j_{1}, j_{2}, \ldots, j_{r})=\underline i.$

Set $J_1:=(\{1,2, \ldots, n\}\setminus A_w)\cap J'(w_0, \underline j)$.
Let $R_w=R^+\setminus (\bigcup_{v\leq w}R^{+}(v^{-1}))$.

Let $f_w:H^0(Z(w_0, \underline j), T_{(w_0, \underline j)}) \longrightarrow H^0(Z(w, \underline i), T_{(w, \underline 
 i)})$ be the homomorphism as above.

Now, we will describe the kernel of the  map $f_w$ when $G$ is simply laced.
Let $Ker (f_w)$ be the kernel of $f_w$.

\begin{corollary} \label{kernel} Let $G$ be simply laced. Then, we have 
$$Ker (f_w) = (\bigcap_{k\in A_w} Ker (\alpha_k)) \oplus (\bigoplus_{\beta\in R_w}\mathfrak g_{-\beta})\oplus (\bigoplus_{j\in J_1} \mathfrak g_{\alpha_j}).$$

\end{corollary}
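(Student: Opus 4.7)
The plan is to analyze $\mathrm{Ker}(f_w)$ weight-by-weight under the $T$-action on the domain
$$\mathfrak p_{J(w_0, \underline j)} = \mathfrak h \oplus \bigoplus_{\beta\in R^+}\mathfrak g_{-\beta} \oplus \bigoplus_{\alpha\in J(w_0,\underline j)}\mathfrak g_\alpha.$$
Since $f_w$ is $T$-equivariant and surjective (Theorem \ref{theorem1}(4)), each one-dimensional weight space $\mathfrak g_\gamma$ either lies entirely in the kernel or maps isomorphically onto the corresponding weight space of $H^0(Z(w,\underline i), T_{(w,\underline i)})$. For the positive weights $\alpha = \alpha_{j_l}\in J(w_0,\underline j)$ with $l\in J'(w_0,\underline j)$, the relation $\langle\alpha_{j_l},\alpha_{j_l}\rangle=2\ne 0$ forces $j_l$ not to repeat earlier in $\underline j$, so $j_l\in A_w$ iff $l\le r$ iff $\alpha\in J(w,\underline i)$. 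By Proposition \ref{prop2}, $\alpha$ is a weight of $H^0(Z(w,\underline i), T_{(w,\underline i)})$ precisely when $\alpha\in J(w,\underline i)$; hence $\mathfrak g_\alpha\subseteq\mathrm{Ker}(f_w)$ exactly when $l>r$, yielding the contribution $\bigoplus_{j\in J_1}\mathfrak g_{\alpha_j}$.

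For the zero weight, Proposition \ref{Prop0} gives $\dim H^0(Z(w,\underline i), T_{(w,\underline i)})_0=|A_w|$, so $\mathrm{Ker}(f_w)\cap\mathfrak h$ has dimension $n-|A_w|$, matching $\dim\bigcap_{k\in A_w}\ker\alpha_k$. I plan to establish the containment $\mathrm{Ker}(f_w)\cap\mathfrak h\subseteq\bigcap_{k\in A_w}\ker\alpha_k$ via a geometric argument using the canonical $T$-fixed point $z_0=\overline{(e,\ldots,e)}\in Z(w,\underline i)$: the iterated $\mathbb P^1$-bundle structure shows that $T_{z_0}Z(w,\underline i)$ has $T$-weights exactly $\{\alpha_{i_1},\ldots,\alpha_{i_r}\}$, and any $h\in\mathfrak h$ with $f_w(h)=0$ induces the zero vector field on $Z(w,\underline i)$, hence acts trivially on $T_{z_0}Z(w,\underline i)$, forcing $\alpha_{i_k}(h)=0$ for every $k=1,\ldots,r$. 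Equality of the two subspaces then follows by the dimension count.

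The negative weight piece is the main obstacle. For $\beta\in R^+$ we have $\mathfrak g_{-\beta}\subseteq\mathrm{Ker}(f_w)$ iff $U_{-\beta}$ acts trivially on $Z(w,\underline i)$; since $\phi_w$ is a $B$-equivariant birational morphism to $X(w)$, this is equivalent to $U_{-\beta}$ acting trivially on $X(w)$. The forward implication $\mathfrak g_{-\beta}\subseteq\mathrm{Ker}(f_w)\Rightarrow\beta\in R_w$ is immediate: a trivial action of $U_{-\beta}$ on $X(w)$ in particular fixes each $T$-fixed point $vB$ for $v\le w$, giving $\beta\notin R^+(v^{-1})$ for every such $v$. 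The converse, $\beta\in R_w\Rightarrow U_{-\beta}$ acts trivially on $X(w)$, is the key technical step: I plan to prove it by verifying that $U_{-\beta}$ fixes every point of each Schubert cell $BvB/B=\prod_{\gamma\in R^+(v^{-1})}U_{-\gamma}\cdot vB$ with $v\le w$, via Chevalley commutator formulas. Moving $u_{-\beta}(t)$ past the parametrizing root subgroups produces higher-order factors in $U_{-\beta-\sum c_i\gamma_i}$, and the combinatorial input needed is that for $\beta\in R_w$ all such roots $\beta+\sum c_i\gamma_i$ remain outside $R^+(v^{-1})$, so the corresponding elements still fix $vB$; this follows from the bi-convexity of inversion sets together with the defining property of $R_w$. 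Once this is verified, the non-zero-image $\beta$'s are exactly $R_w^c=\bigcup_{v\le w}R^+(v^{-1})$, so the negative-root contribution to the kernel is $\bigoplus_{\beta\in R_w}\mathfrak g_{-\beta}$, completing the proof.
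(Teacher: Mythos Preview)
Your weight-by-weight strategy is correct and your treatment of the positive-weight piece matches the paper's (both rest on Proposition \ref{prop2}). Your approach diverges from the paper in two places, and both deserve comment.

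\textbf{Zero weight.} The paper argues via the $B$-module structure: if $h\in\mathfrak h\cap\operatorname{Ker}(f_w)$ had $\alpha_k(h)\neq 0$ for some $k\in A_w$, then $x_{-\alpha_k}\cdot h=\alpha_k(h)x_{-\alpha_k}\in\operatorname{Ker}(f_w)$, contradicting an explicitly checked injectivity of $f_w$ on $\mathbb C h_{\alpha_k}\oplus\mathfrak g_{-\alpha_k}$ (their Step~1, via Lemma \ref{1}(2)). Your tangent-space-at-$z_0$ argument is a valid alternative and more geometric; just note that what you are really using is that $f_w(h)=0$ forces the one-parameter subgroup $\exp(th)$ to act trivially on $Z(w,\underline i)$, hence trivially on $T_{z_0}Z(w,\underline i)$, whence $\alpha_{i_k}(h)=0$.

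\textbf{Negative weight.} Here the paper does not argue directly: after reducing (as you do) from $Z(w,\underline i)$ to $X(w)$ via the birational $\phi_w$, it invokes \cite[Corollary 3.9]{Ka4} to identify $\mathfrak b_u\cap\operatorname{Ker}(f_w)$ with $\bigoplus_{\beta\in R_w}\mathfrak g_{-\beta}$. Your plan to prove this by Chevalley commutators is viable, but the step you label ``bi-convexity of inversion sets together with the defining property of $R_w$'' is not as automatic as you suggest. What you actually need is: if $\delta'\in R_w$, $v\le w$, $\gamma\in R^+(v^{-1})$, and $\delta'+\gamma\in R^+$, then $\delta'+\gamma\in R_w$. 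Bi-convexity alone does not give this (indeed, for fixed $v$ the analogous statement with $R_w$ replaced by $R^+\setminus R^+(v^{-1})$ is false). The missing ingredient is a reflection trick: if $\delta'+\gamma\in R^+(v'^{-1})$ for some $v'\le w$, then convexity of the complement forces $\gamma\in R^+(v'^{-1})$, and since $\langle\delta',\gamma\rangle=-1$ in the simply laced case one computes $(s_\gamma v')^{-1}\delta'=v'^{-1}(\delta'+\gamma)<0$; but $s_\gamma v'<v'\le w$, contradicting $\delta'\in R_w$. With this lemma in hand, your commutator argument goes through by induction on height. So your route is correct, just one notch less immediate than your sketch indicates; the paper sidesteps all of this by citing the external reference.
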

\begin{proof}
Step 1: We will prove that for every $j\in A_w$, the restriction of $f_{w}$
to the subspace $\mathbb C.h_{\alpha_j}\oplus \mathfrak g_{-\alpha_j}$ is injective.

Fix $j\in A_w$. 
Let $k$ be the least positive integer in $\{1,2,\ldots, r\}$ such that $j=i_k$.
Let $v=s_{i_1}s_{i_2}\cdots s_{i_k}$ and set $\underline i'=(i_1,\ldots, i_k)$.
Then, by Lemma \ref{1}(2), we see that $\mathbb C.h_{\alpha_j}\oplus \mathfrak g_{-\alpha_j}$
is a $B_{\alpha_j}$-submodule of $H^0(v, \alpha_j)$.
By {\it LES}, $H^0(v, \alpha_j)$ is a $B$-submodule of $H^{0}(Z(v, \underline i') , T_{(v , \underline i')}).$
Let $g:H^0(Z(w, \underline i), 
T_{(w, \underline i)})\longrightarrow H^{0}(Z(v, \underline i') , T_{(v , \underline i')})$ be the homomorphism of 
$B$-modules induced by the fibration $Z(w, \underline i)\longrightarrow Z(v, \underline i')$.

Now, consider  the following commutative diagram of $B$-modules:
\begin{center}  
  $\xymatrix{H^{0}(Z(w_0, \underline j) , T_{(w_0 , \underline j)}) \ar[dr]^{f_v} \ar[d]^{f_w}\\
H^0(Z(w, \underline i), 
T_{(w, \underline i)}) 
\ar[r]^{g} &  H^{0}(Z(v, \underline i') , T_{(v , \underline i')})} $
\end{center}

Note that $\mathbb C.h_{\alpha_j}\oplus \mathfrak g_{-\alpha_j}$ is a subspace of $H^{0}(Z(w_0, \underline j) , T_{(w_0 , \underline j)})$.
Therefore, by the above arguments, the restriction of $f_{v}$ to the subspace $\mathbb C.h_{\alpha_j}\oplus \mathfrak g_{-\alpha_j}$ is injective.
Hence, by  commutativity of the above diagram, we conclude that 
the restriction of $f_{w}$ to the subspace $\mathbb C.h_{\alpha_j}\oplus \mathfrak g_{-\alpha_j}$ is injective.


Step 2: Let $\mathfrak l_{A_w}$ be the Levi subalgebra of $\mathfrak p_{A_w}$, let $\mathfrak z(\mathfrak l_{A_w})$
be the center of $\mathfrak l_{A_w}$. 
We will prove that $$\mathfrak h \cap Ker (f_w) =\mathfrak z(\mathfrak l_{A_w})=\bigcap_{k\in A_w} Ker (\alpha_k).$$

First note that $\bigcap_{k\in A_w} Ker (\alpha_k)= \mathfrak z(\mathfrak l_{A_w})$ and the dimension of 
$\mathfrak z(\mathfrak l_{A_w})$ is $n-d(w)$ (since $|A_w|=d(w)$).  

 Now, we prove that $\mathfrak h \cap Ker (f_w)$ is contained in $\bigcap_{k\in A_w} Ker (\alpha_k) $.
 
 Assume the contrary. Then, there exists a $k\in A_w$ and $h\in \mathfrak h \cap Ker (f_w) $ such that 
 $\alpha_k(h)\neq 0$. Then,  $$x_{-\alpha_k}\cdot h= -[h, x_{-\alpha_k}]=\alpha_k(h)x_{-\alpha_k}$$ is a non zero multiple of 
 $x_{-\alpha_k}$. Hence $\mathfrak g_{-\alpha_k}$ is contained in $Ker (f_w)$, which 
  contradicts step 1.
 Therefore, $\mathfrak h \cap Ker (f_w)$ is contained in $\bigcap_{k\in A_w} Ker (\alpha_k) $.

By Proposition \ref{Prop0}, we have $H^{0}(Z(w_{0}, \underline j) , T_{(w_{0} , \underline j)})_0=\mathfrak h$  and $dim(\mathfrak h \cap Ker (f_w))=n-d(w)$. 
Hence, we see that $$f_w(H^{0}(Z(w_{0}, \underline j) , T_{(w_{0} , \underline j)})_0)=
H^0(Z(w, \underline i), T_{(w, \underline i)})_0.$$

By the above arguments, $\mathfrak h \cap Ker (f_w)$ is a subspace of $\bigcap_{k\in A_w} Ker (\alpha_k) $
having the same dimension as that of $\bigcap_{k\in A_w} Ker (\alpha_k) $.
Hence, we conclude that 
$$\mathfrak h \cap Ker (f_w) = \bigcap_{k\in A_w} Ker (\alpha_k)=\mathfrak z(\mathfrak l_{A_w}).$$

 Step 3: We will prove that for $j\in J_1$, $sl_{2, \alpha_j}$ is contained in $Ker (f_w).$
 
 
 Fix $j\in J_1$.  By Theorem \ref{theorem1}(2), it follows that $sl_{2, \alpha_j}$ is a $B_{\alpha_j}$-submodule of
\\$H^{0}(Z(w_{0}, \underline j) , T_{(w_{0} , \underline j)}).$
By Proposition \ref{prop2}(1),
we see that
$H^0(Z(w, \underline i), T_{(w, \underline i)})_{\alpha_j}=0$. 
Hence, $\mathfrak g_{\alpha_j} \subset Ker (f_w)$.
Since $sl_{2, \alpha_j}$ is a cyclic $B_{\alpha_j}$-module generated by
$\mathfrak g_{\alpha_j}$, it 
follows that $sl_{2, \alpha_j}$ is contained in $Ker (f_w).$

Step 4: The intersection of the nilradical of $\mathfrak b$ and  
$Ker (f_w)$ is equal to the direct sum  $\bigoplus_{\beta\in R_w}\mathfrak g_{-\beta}$ of $T$-modules. 

Consider the birational morphism $\phi_w:Z(w, \underline i)\longrightarrow X(w)$.
Note that $\phi_w$ is a $B$-equivariant morphism for the natural left action of $B$ on $Z(w, \underline i)$ 
(respectively, on $X(w)$).
Let $\phi:B\longrightarrow Aut^0(X(w))$ (respectively, $\phi':B\longrightarrow Aut^0(Z(w, \underline i))$)
be the homomorphism induced by the action of $B$ on $X(w)$ (respectively, on $Z(w, \underline i)$).
Since $\phi_w$ is birational, we have $Ker(\phi)\cap B_u=Ker(\phi')\cap B_u$, where $B_u$ is the unipotent radical of $B$.

Since $G$ is simply laced, by \cite[Corollary 3.9]{Ka4}, we conclude that $\mathfrak b_u \cap Ker(f_w)=\bigoplus_{\beta\in R_w}\mathfrak g_{-\beta}$, where
$\mathfrak b_u$ is the nilradical of $\mathfrak b$.

From the steps 1 to 4, we conclude that    
$$Ker (f_w) = (\bigcap_{k\in A_w} Ker (\alpha_k)) \oplus (\bigoplus_{\beta\in R_w}\mathfrak g_{-\beta})\oplus (\bigoplus_{j\in J_1} \mathfrak g_{\alpha_j}).$$

\end{proof}

Recall that if $X$ is a smooth projective variety over $\mathbb C$, the connected component 
of the group of all automorphisms of $X$ containing identity automorphism is an algebraic group
(see \cite[p.17, Theorem 3.7]{Mat}, \cite[p.268]{Grothendieck}, which also deals the case when 
$X$ may be singular or it may be defined over any field). Futher, the Lie algebrs of this automorphism
group is isomorphic to the space of all vector fields on $X$, that is the space $H^0(X, T_X)$ of all
global sections of the tangent bundle $T_X$ of $X$ (see \cite[p.13, Lemma 3.4]{Mat}).

We now prove the main results of the paper using Theorem \ref{theorem1}.

  Recall that $Aut^0(Z(w, \underline i))$ is the connected component of the identity element of 
  the automorphism group of $Z(w, \underline i)$. 
  
      \begin{theorem}\label{cor3}{\ }
\begin{enumerate}
\item $ P_{J(w_0, \underline i)} \simeq Aut^0(Z(w_0, \underline i))$.
  \item $Aut^0(Z(w, \underline i))$ contains a closed subgroup isomorphic to $ 
 P_{J(w, \underline i)}$   if and only if $ w^{-1}(\alpha_0)<0$. In such a case, we have 
  $P_{J(w, \underline i)} = P_{J(w_0, \underline j)}$  for any reduced expression $w_0=s_{{j_1}}s_{{j_2}}\cdots s_{{j_N}}$ of $w_0$ such that 
$\underline j=(j_1, j_2, \cdots, j_N)$ and $(j_1, j_2, \cdots, j_r)=\underline i$.
 \item If $G$ is simply laced,  
  $ P_{J(w, \underline i)}\simeq Aut^0(Z(w, \underline i)) $ if and only if $ w^{-1}(\alpha_0)<0$.
 In such a case, we have $Aut^0(Z(w, \underline i)) \simeq Aut^0(Z(w_0, \underline j))$, where $\underline j$
 is as in (2).
 \item  The homomorphism 
 $f_w: H^{0}(Z(w_{0}, \underline j) , T_{(w_{0} , \underline j)})\longrightarrow H^{0}(Z(w, \underline i) , T_{(w , \underline i)})$
 is induced by a homomorphism $g_w : Aut^0(Z(w_0, \underline j)) \longrightarrow Aut^0(Z(w, \underline i))$
of algebraic groups, where $\underline j$ is as in (2). 
  \item If $G$ is simply laced, the homomorphism $g_w : Aut^0(Z(w_0, \underline j)) \longrightarrow Aut^0(Z(w, \underline i))$
of algebraic groups is surjective, where $\underline j$ is as in (2).
 \item The rank of $Aut^0(Z(w, \underline i))$ is at most the rank of $G$.
 \end{enumerate}
     \end{theorem}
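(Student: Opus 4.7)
The plan is to upgrade each Lie algebra statement of Theorem \ref{theorem1} to a statement about the connected algebraic group $Aut^0(Z(w,\underline i))$, exploiting the Matsumura--Monsky identification of its Lie algebra with $H^0(Z(w,\underline i), T_{(w,\underline i)})$ (cited in the proof of Lemma \ref{l3}). Throughout I will use the adjoint hypothesis on $G$ to rule out nontrivial central isogenies when reconstructing algebraic groups from the Lie-algebraic data.

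For part (1), I would first note that the action of $P_{\alpha_{i_1}}$ on $Z(w_0,\underline i)$ produces a morphism of algebraic groups $P_{\alpha_{i_1}}\to Aut^0(Z(w_0,\underline i))$, injective by Lemma \ref{l3}(2); in particular $B$ embeds as a Borel subgroup of $Aut^0(Z(w_0,\underline i))$, with the correct Lie algebra by Corollary \ref{C1}(2). Theorem \ref{theorem1}(1) then identifies the full Lie algebra with $\mathfrak p_{J(w_0,\underline i)}\subset\mathfrak g$; each one-dimensional root space $\mathfrak g_{\alpha_j}$ for $\alpha_j\in J(w_0,\underline i)$ integrates to a one-parameter unipotent subgroup, and together with $B$ these generate a closed connected subgroup whose Lie algebra is all of $\mathfrak p_{J(w_0,\underline i)}$, so it exhausts $Aut^0(Z(w_0,\underline i))$ and is isomorphic to $P_{J(w_0,\underline i)}$.

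For (2), the ``only if'' direction follows from applying Proposition \ref{Borel1} to a Borel subgroup of the hypothetical closed subgroup $\simeq P_{J(w,\underline i)}$. For the converse, given $w^{-1}(\alpha_0)<0$, Theorem \ref{theorem1}(2) provides an embedding of algebraic Lie subalgebras $\mathfrak p_{J(w,\underline i)}=\mathfrak p_{J(w_0,\underline j)}\hookrightarrow H^0(Z(w,\underline i),T_{(w,\underline i)})$, which integrates to the desired closed subgroup. Part (3) is the direct group-level translation of Theorem \ref{theorem1}(3) by the argument used in (1). For (4) and (5), I would construct $g_w$ by observing that the $P_{J(w_0,\underline j)}$-action on $Z(w_0,\underline j)$ from (1) descends, through the $P_{\alpha_{i_1}}$-equivariant fibration $Z(w_0,\underline j)\to Z(w,\underline i)$, to an action on $Z(w,\underline i)$; by construction its differential is $f_w$, and in the simply laced case the surjectivity of $f_w$ from Theorem \ref{theorem1}(4) forces surjectivity of $g_w$ since both groups are connected. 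Finally, (6) is immediate: the rank of $Aut^0(Z(w,\underline i))$ equals the dimension of a maximal toral subalgebra of its Lie algebra, which by Proposition \ref{Prop0}(3) is $f_w(\mathfrak h)$, of dimension at most $\mathrm{rank}(G)$.

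The hardest step will be justifying the descent of the $P_{J(w_0,\underline j)}$-action through the fibration in (4), since automorphisms do not in general push forward along a morphism. The key is that the $P_{J(w_0,\underline j)}$-action is built up from the $P_{\alpha_{i_1}}$-action by left multiplication on the first factor together with actions of the pairwise commuting simple root subgroups $U_{\alpha_{j_l}}$ for $l\in J'(w_0,\underline j)$ on subsequent factors --- each of which is manifestly compatible with omitting the rightmost factors defining the fibration $Z(w_0,\underline j)\to Z(w,\underline i)$ --- so the descent is essentially tautological, and the remainder of the argument reduces to matching Lie algebras, which Theorem \ref{theorem1} has already handled.
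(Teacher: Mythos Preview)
Your treatment of parts (1)--(3), (5), and (6) is essentially the paper's: integrate the Lie-algebra statements of Theorem \ref{theorem1} to the group level, using that $G$ is adjoint to kill the center of the simply connected cover. The paper routes this through $\widetilde P_{J(w,\underline i)}$ and the observation $Z(\widetilde P_{J(w,\underline i)})=Z(\widetilde B)$, while you generate $Aut^0$ from below by $B$ and one-parameter subgroups; both work and the bookkeeping is the same.

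The gap is in (4). Your claim that the $P_{J(w_0,\underline j)}$-action on $Z(w_0,\underline j)$ is ``built up from the $P_{\alpha_{i_1}}$-action on the first factor together with actions of the $U_{\alpha_{j_l}}$ on subsequent factors'' is not justified: for $l>1$ there is no well-defined action of $U_{\alpha_{j_l}}$ by left translation on the $l$-th slot of $P_{\alpha_{i_1}}\times\cdots\times P_{\alpha_{i_N}}$ compatible with the $B^{N}$-quotient, and the $P_{J(w_0,\underline j)}$-action you obtained in (1) is only given abstractly via the Lie algebra identification. So the descent through the fibration is not ``essentially tautological'' --- you would need an independent reason why every automorphism of $Z(w_0,\underline j)$ preserves the fibers of $Z(w_0,\underline j)\to Z(w,\underline i)$ (for instance an argument via the nef cone or Picard group), and you have not supplied one.

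The paper avoids this by constructing $g_w$ directly on $Z(w,\underline i)$ rather than by descent. It uses the Levi decomposition $P_{J(w_0,\underline j)}=L_{ss}\,Z(L)\,P_u$: the factors $P_u$ and $Z(L)$ lie in $B$ and hence already act; for each $\alpha_{j}\in J(w_0,\underline j)$ one integrates the copy of $sl_{2,\alpha_j}$ inside $H^0(Z(w,\underline i),T_{(w,\underline i)})$ furnished by Proposition \ref{prop2} (taking the trivial homomorphism when $\alpha_j\notin J(w,\underline i)$, since then $\mathfrak g_{\alpha_j}\subset\ker f_w$). One then checks, by comparing with the $B$-action, that each $SL_{2,\alpha_j}$ factors through $PGL_{2,\alpha_j}$, so that the center of $\widetilde P_{J(w_0,\underline j)}$ acts trivially and the map descends to $P_{J(w_0,\underline j)}$. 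This sidesteps any geometric analysis of the fibration.
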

\begin{proof} Recall that by \cite[Theorem 3.7]{Mat}, $Aut^0(Z(w, \underline i))$ is an 
algebraic group 
and  $$Lie(Aut^0(Z(w, \underline i)))=H^0(Z(w, \underline i), T_{(w, \underline i)}).$$

Let $\pi:\widetilde G \longrightarrow G$ be the simply connected covering of $G$.
Let $\widetilde P_{J(w, \underline i)}$ (respectively, $\widetilde B$) be the inverse image of 
$P_{J(w, \underline i)}$ (respectively, of $B$)  in $\widetilde G$.  

Proof of (2): 
 If $w^{-1}(\alpha_0)<0$, then by Theorem \ref{theorem1}(2), $\mathfrak p_{J(w, \underline i)}$ is 
 isomorphic to a Lie subalgebra of  $H^0(Z(w, \underline i), T_{(w, \underline i)})$. 
Hence, there is a homomorphism $\widetilde \psi_w:\widetilde P_{J(w, \underline i)}\longrightarrow 
Aut^0(Z(w,\underline i))$ of algebraic groups.
Since the center $Z(\widetilde P_{J(w, \underline i)})$ of $\widetilde P_{J(w, \underline i)}$ is 
same as $Z(\widetilde B)$ and $B$ acts on $Z(w,\underline i)$, 
$Z(\widetilde P_{J(w, \underline i)})$ acts trivially on $Z(w, \underline i)$. Hence, the action 
of $\widetilde P_{J(w, \underline i)}$ induces a
homomorphism $\psi_w: P_{J(w, \underline i)}\longrightarrow Aut^0(Z(w, \underline i))$ of algebraic 
groups.
Since $\mathfrak p_{J(w, \underline i)}$ is isomorphic to a Lie subalgebra of  
$H^0(Z(w, \underline i), T_{(w, \underline i)})$, $\psi_w$ is an isomorphism onto its image.

On the other hand, if $Aut^0(Z(w, \underline i))$ contains a closed subgroup isomorphic to $ P_{J(w, \underline i)}$,
then there is an injective homomorphism $\psi_w:P_{J(w, \underline i)}\longrightarrow 
Aut^0(Z(w, \underline i))$ of algebraic groups. Further, $\psi_w$ induces an
injective homomorphism $\widetilde f_w:\mathfrak p_{J(w, \underline i)}\longrightarrow 
H^0(Z(w, \underline i), T_{(w, \underline i)})$ of Lie algebras. Hence, by Theorem \ref{theorem1}(2),
we have $w^{-1}(\alpha_0)<0$.
This completes the proof of (2).

Proofs of (1), (3) and (4) are similar to the proof of (2). For the sake of completeness we give proof here.

Proof of (1).
By Theorem \ref{theorem1}(1), $\mathfrak p_{J(w_0, \underline i)}$ is 
 isomorphic to the Lie algebra $H^0(Z(w_0, \underline i), T_{(w_0, \underline i)})$. 
Hence, there is a homomorphism $\widetilde \psi_{w_0}:\widetilde P_{J(w_0, \underline i)}\longrightarrow 
Aut^0(Z(w_0,\underline i))$ of algebraic groups.
Since the center $Z(\widetilde P_{J(w_0, \underline i)})$ of $\widetilde P_{J(w_0, \underline i)}$ is 
same as $Z(\widetilde B)$ and $B$ acts on $Z(w_0,\underline i)$, 
$Z(\widetilde P_{J(w_0, \underline i)})$ acts trivially on $Z(w_0, \underline i)$. Hence, the action 
of $\widetilde P_{J(w_0, \underline i)}$ induces a
homomorphism $\psi_{w_0}: P_{J(w_0, \underline i)}\longrightarrow Aut^0(Z(w_0, \underline i))$ of algebraic 
groups. Note that  $\psi_{w_0}$ induces an isomorphism $\widetilde f_{w_0}:\mathfrak p_{J(w_0, \underline i)}\longrightarrow 
H^0(Z(w_0, \underline i), T_{(w_0, \underline i)})$ of Lie algebras.
Hence, we conclude that $\psi_{w_0}:P_{J(w_0, \underline i)}\longrightarrow 
Aut^0(Z(w_0,\underline i))$ is an isomorphism of algebraic groups.

Proof of (3). By (2), we have the homomorphism $\psi_w:P_{J(w, \underline i)}\longrightarrow 
Aut^0(Z(w, \underline i))$ of algebraic groups is injective if and only if $w^{-1}(\alpha_0)<0$.
Since $G$ is simply laced, by Theorem \ref{theorem1}(3), we conclude the proof of (3). 

Proof of (4). By (1), we have $P_{J(w_0, \underline j)} \simeq Aut^0(Z(w_0, \underline j))$.

Let $$P_{J(w_0, \underline j)}=LP_u=L_{ss}Z(L)P_u$$ be the Levi decomposition of $P_{J(w_0, \underline j)}$ such that $T\subset L$, where
$L$ is the Levi factor of $P_{J(w_0, \underline j)}$ containing $T$, $L_{ss}$ is semi simple part of $L$ and $P_u$ is 
unipotent radical of $P_{J(w_0, \underline j)}$.

Since $P_u\subset B$, we have the homomorphism $f_1: P_u\longrightarrow Aut^0(Z(w, \underline i))$ of algebraic groups.

Since $Z(L)\subset T\subset B$, we have the homomorphism $f_2: Z(L)\longrightarrow Aut^0(Z(w, \underline i))$.

For $j\in J(w, \underline i)$, by Lemma \ref{prop2}, $sl_{2, \alpha_j}$ is contained in 
$H^{0}(Z(w, \underline i), T_{(w, \underline i)})$.
Hence for each $j\in J(w, \underline i)$,  we have $\phi_{j}: SL_{2, \alpha_j}\longrightarrow Aut^0(Z(w, \underline i))$.

For $j\in J(w_0, \underline j)\setminus J(w, \underline i)$, by the proof of Corollary 
\ref{kernel} (even though $G$ is not necessarily simply laced), we have $\mathfrak g_{\alpha_j} \subset Ker(f_w)$.
Hence, the homomorphism $\phi_{j}: SL_{2, \alpha_j}\longrightarrow Aut^0(Z(w, \underline i))$ is trivial.
That is $SL_{2, \alpha_j}$ acts trivially on $Z(w, \underline i))$ for each $j\in J(w_0, \underline j)\setminus J(w, \underline i)$.

Therefore, we have the homomorphism $\widetilde L \longrightarrow Aut^0(Z(w, \underline i))$ of algebraic groups,
where $\widetilde L$ is inverse image of $L$ in $\widetilde G$ by the universal cover $\pi:\widetilde G\longrightarrow G$. 

Claim: For $j\in J(w_0, \underline j)$, we have the following commutative diagram of algebraic groups:

\begin{center}  
  $\xymatrix{ SL_{2, \alpha_j} \ar[d] \ar[r]^{\phi_j} & Aut^0(Z(w, \underline i))\\
\ar[ur] PGL_{2,\alpha_j}}$
\end{center}

Let $G_{\alpha_j}$ be the image of $SL_{2, \alpha_j}$ in $Aut^0(Z(w, \underline i))$,
let $B_{\alpha_j}=B\cap G_{\alpha_j}$. Let $\widetilde B_{\alpha_j}=\pi^{-1}(B_{\alpha_j})$, which is a Borel
subgroup of $SL_{2, \alpha_j}$.

Now consider the following commutative diagram:
\begin{center}  
  $\xymatrix{ 
    SL_{2, \alpha_j} \ar[r]^{\phi_j} & Aut^0(Z(w, \underline i))\\
 \widetilde B_{\alpha_j} \ar@{^{(}->}[u] \ar[r]^{\pi} & \ar@{^{(}->}[u] B_{\alpha_j}}$
\end{center}

Since the kernel of $\pi$ is contained in the kernel of $\phi_j$,  the action of $Z(\widetilde B_{\alpha_j})$ 
on 
$Z(w, \underline i)$ is trivial.
Since $Z(\widetilde B_{\alpha_j})=Z(SL_{2, \alpha_j})$, 
we have the homomorphism $PSL_{2, \alpha_j}\longrightarrow  Aut^0(Z(w, \underline i))$. This proves the claim.

From the above discussion, we conclude that 
 the center $Z(\widetilde P_{J(w_0, \underline j)})$ acts trivially on $Z(w, \underline i)$. 
Hence, there is a homomorphism $g_w : P_{J(w_0, \underline j)} \longrightarrow Aut^0(Z(w, \underline i))$
of algebraic groups which  induces $f_w$.  This completes the proof of (4).

Proof of (5) follows from Theorem \ref{theorem1}(4).

Proof of (6)
follows from Proposition \ref{Prop0}.
\end{proof}

We use the same notation as before. Assume that $G$ is simply laced.

Let $g_w: Aut^0(Z(w_0, \underline j))\longrightarrow Aut^0(Z(w, \underline i))$ be the natural map as in Theorem \ref{cor3} (4).
 Let $U^+$ be  the unipotent radical of $B^+$.
For $j\in J_1$, let $U_{\alpha_j}^+$ denote the one-dimensional $T$-stable closed subgroup of $U^+$ (for the conjugation action of 
$T$ on $G$) corresponding to $\alpha_{j}$. 
Let $T(w):=\bigcap_{k\in A_w}Ker(\alpha_k)$.
 Since $\{\alpha_k: k\in A_w\}$ is a subset of the $\mathbb Z$-basis $S$ of $X(T)$, $T(w)$ is connected.
\begin{corollary}\label{kernel1} The connected component of the
 kernel of the  map $g_w$ is the closed subgroup of 
 $Aut^0(Z(w_0, \underline j))$ generated by the torus $T(w)$, $\{U_{-\beta} : \beta \in R_w\}$ and $\{U^+_{\alpha_j} : j\in J_1\}$.
 
 \end{corollary}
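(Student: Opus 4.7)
The plan is to use the standard correspondence between connected closed algebraic subgroups of $Aut^0(Z(w_0,\underline j))$ and Lie subalgebras of its Lie algebra $H^0(Z(w_0,\underline j), T_{(w_0,\underline j)})$, and then transfer the description of $\ker(f_w)$ obtained in Corollary \ref{kernel} to the group level. Since $g_w$ is a homomorphism of algebraic groups whose differential is $f_w$, the Lie algebra of the connected component $\ker(g_w)^{0}$ coincides with $\ker(f_w)$; thus it suffices to exhibit a connected closed subgroup $H$ of $Aut^0(Z(w_0,\underline j))$ whose Lie algebra equals $\ker(f_w)$, and to check that $H$ is contained in $\ker(g_w)$.

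First I would define $H$ as the closed subgroup generated by $T(w)$, $\{U_{-\beta}:\beta\in R_w\}$ and $\{U^+_{\alpha_j}:j\in J_1\}$. Each of these pieces lives inside $P_{J(w_0,\underline j)}\simeq Aut^0(Z(w_0,\underline j))$: the torus $T(w)\subset T\subset B$, the root subgroups $U_{-\beta}\subset B$ for $\beta\in R^+$, and $U^+_{\alpha_j}$ for $j\in J_1\subset J'(w_0,\underline j)$ sits in the $SL_2$ (or $PSL_2$) subgroup attached to the simple reflection $s_{\alpha_j}$ that belongs to $P_{J(w_0,\underline j)}$ by definition. Since each generator is a connected closed subgroup, $H$ is a connected closed algebraic subgroup of $Aut^0(Z(w_0,\underline j))$. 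Its Lie algebra is the Lie subalgebra of $H^{0}(Z(w_0,\underline j), T_{(w_0,\underline j)})$ generated by
\[
\mathrm{Lie}(T(w)) \;=\; \bigcap_{k\in A_w} Ker(\alpha_k), \qquad \mathrm{Lie}(U_{-\beta}) \;=\; \mathfrak g_{-\beta}, \qquad \mathrm{Lie}(U^+_{\alpha_j}) \;=\; \mathfrak g_{\alpha_j}.
\]
By Corollary \ref{kernel}, the sum (as vector spaces) of these three pieces is exactly $\ker(f_w)$, and this sum is already a Lie subalgebra (being a kernel of a Lie algebra homomorphism). Hence the Lie subalgebra generated by them coincides with $\ker(f_w)$, so $\mathrm{Lie}(H)=\ker(f_w)=\mathrm{Lie}(\ker(g_w)^{0})$.

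Next I would verify that $H\subseteq \ker(g_w)$. Because each generator $T(w)$, $U_{-\beta}$, $U^+_{\alpha_j}$ is connected and its Lie algebra lies in $\ker(f_w)=\mathrm{Lie}(\ker(g_w)^{0})$, each generator is contained in $\ker(g_w)^{0}$. Therefore the subgroup they generate satisfies $H\subseteq \ker(g_w)^{0}$. Combining the two inclusions with the equality of Lie algebras and the fact that, in characteristic zero, a connected algebraic subgroup of an algebraic group is determined by its Lie algebra, we conclude $H=\ker(g_w)^{0}$, as claimed.

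The main obstacle I anticipate is purely bookkeeping: making sure that every generator is literally a subgroup of $P_{J(w_0,\underline j)}\simeq Aut^0(Z(w_0,\underline j))$ (in particular for the positive root subgroups $U^+_{\alpha_j}$, one must use that $j\in J_1\subset J'(w_0,\underline j)$ so that $s_{\alpha_j}\in W_{J(w_0,\underline j)}$), and confirming that the vector-space sum $\ker(f_w)$ from Corollary \ref{kernel} is closed under brackets so that no additional Lie-algebra generation occurs. Everything beyond this is the standard Lie-group/Lie-algebra dictionary.
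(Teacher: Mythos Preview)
Your proposal is correct and follows essentially the same approach as the paper's proof: define $H$ as the subgroup generated by the three families, observe it is connected, identify its Lie algebra with $\ker(f_w)$ via Corollary \ref{kernel}, and conclude $H=\ker(g_w)^0$ by the characteristic-zero correspondence between connected closed subgroups and their Lie algebras. The paper argues the equality $\mathrm{Lie}(H)=\ker(f_w)$ by a dimension count (noting $\mathrm{Lie}(H)\subset \ker(f_w)$ and comparing dimensions), whereas you phrase it via the Lie subalgebra generated by the pieces being already closed under brackets; the content is the same.
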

\begin{proof}
  Let $K$ be the kernel of the homomorphism $g_w$. 
 Then, we have the following exact
 sequence of algebraic groups:
 $$1 \longrightarrow K \longrightarrow Aut^0(Z(w_0, \underline j)) \longrightarrow Aut^0(Z(w, \underline i))\longrightarrow 1.$$
By using the differentials, we have following exact sequence of Lie algebras: 
$$ 0\longrightarrow Lie(K)\longrightarrow  H^0(Z(w_0, \underline j), T_{(w_0, \underline j)})
\longrightarrow H^0(Z(w, \underline i), T_{(w, \underline i)}) \longrightarrow 0.$$

By \cite[p.85, Theorem 12.5]{Hum2},
the Lie algebra of $K$ is $Ker (f_w)$. By Corollary \ref{kernel}, we have 
$$Ker (f_w)= (\bigcap_{k\in A_w} Ker (\alpha_k)) \oplus (\bigoplus_{\beta\in R_w}\mathfrak g_{-\beta})\oplus (\bigoplus_{j\in J_1} \mathfrak g_{\alpha_j}).$$

Let $H$ be the closed subgroup of 
$Aut^0(Z(w_0, \underline j))$ generated by $T(w)$, $\{U_{-\beta} : \beta \in R_w\}$ and $\{U^+_{\alpha_j} : j\in J_1\}$.
Note that $H$ is connected (see \cite[p.56, Corollary 7.5]{Hum2}) and
$Lie(H)\subset Ker (f_w)$. Since $dim(Lie(H))=dim(Ker (f_w))$, we have $$Lie(H)=Ker (f_w).$$

Hence, we conclude that $K^0=H$. This completes the proof of the corollary.
\end{proof}

In the following corollary, for the simplicity of notation we denote the  
homogeneous vector bundle $\mathcal L(w, \mathbb C_{\alpha_0})$ on $X(w)$ corresponding to the character $\alpha_0$  of $B$ 
  by $\mathcal L_{\alpha_0}$.

Consider the left action of $T$ on $G/B$. Let $w\in W$. Note that the Schubert variety 
$X(w^{-1})$ is $T$-stable. We use the notion of semi-stable points introduced by Mumford 
\cite{Mum}.  
Let $\alpha_{0}$ be the highest root of $G$ with respect to $T$ and $B^{+}$. 
We denote by $X(w^{-1})_{T}^{ss}(\mathcal{L}_{\alpha_0})$ the set of all semi-stable 
points of $X(w^{-1})$ with respect to the $T$-linearized line bundle 
$\mathcal{L}_{\alpha_0}$ corresponding to the character $\alpha_0$ of $B$ (see \cite{Mum}).

 The following result is a formulation of the Theorem \ref{cor3} using semi-stable points.
 
  \begin{corollary}\label{semi}
\begin{enumerate}
 \item $Aut^0(Z(w, \underline i))$ contains a closed subgroup isomorphic to 
 $ P_{J(w, \underline i)}$ if and only if $ X(w^{-1})^{ss}_T(\mathcal L_{\alpha_0})\neq 
 \emptyset$.
 \item If $G$ is simply laced,  
 $ Aut^0(Z(w, \underline i))\simeq  P_{J(w, \underline i)}$ if and only if $ X(w^{-1})^{ss}_
 T(\mathcal L_{\alpha_0})\neq \emptyset$.

\end{enumerate}
     \end{corollary}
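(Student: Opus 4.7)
The plan is to reduce Corollary \ref{semi} directly to Theorem \ref{cor3} by translating the combinatorial hypothesis $w^{-1}(\alpha_0)<0$ into the geometric condition about semi-stable points. The whole argument boils down to stitching together two facts already in hand: the automorphism-theoretic equivalences in Theorem \ref{cor3}(2), (3), and the root-theoretic criterion for semi-stability recalled from \cite[Lemma 2.1]{KP}.

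First I would invoke \cite[Lemma 2.1]{KP}, which asserts that for the $T$-linearized line bundle $\mathcal L_{\alpha_0}$ on $G/B$, the Schubert variety $X(w^{-1})$ has a non-empty semi-stable locus
\[
X(w^{-1})^{ss}_T(\mathcal L_{\alpha_0})\neq \emptyset
\]
if and only if $w^{-1}(\alpha_0)<0$. This is precisely the numerical condition appearing throughout Section 7 and is exactly what governs the existence of a $B$-subalgebra inside $H^0(Z(w,\underline i),T_{(w,\underline i)})$ via Proposition \ref{Borel1}.

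For part (1), I would now apply Theorem \ref{cor3}(2): $Aut^0(Z(w,\underline i))$ contains a closed subgroup isomorphic to $P_{J(w,\underline i)}$ if and only if $w^{-1}(\alpha_0)<0$. Combined with the equivalence from \cite[Lemma 2.1]{KP}, this yields the first statement. For part (2), I would invoke Theorem \ref{cor3}(3) in place of (2): in the simply laced case, $Aut^0(Z(w,\underline i))\simeq P_{J(w,\underline i)}$ if and only if $w^{-1}(\alpha_0)<0$, and again the citation of \cite[Lemma 2.1]{KP} completes the translation.

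There is no genuine obstacle here: the corollary is a direct reformulation, and all the hard work — establishing the isomorphism with $P_{J(w,\underline i)}$ and identifying when the inclusion is strict — is already carried out in Theorem \ref{cor3}. The only thing worth verifying carefully is that the $T$-linearization of $\mathcal L_{\alpha_0}$ used in \cite[Lemma 2.1]{KP} is the same as the one implicit in our setup, so that the equivalence $w^{-1}(\alpha_0)<0 \iff X(w^{-1})^{ss}_T(\mathcal L_{\alpha_0})\neq \emptyset$ can be applied without modification; this is standard but deserves a brief mention.
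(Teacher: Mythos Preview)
Your proposal is correct and follows essentially the same approach as the paper: the paper's own proof simply cites \cite[Lemma 2.1]{KP} for the equivalence $X(w^{-1})^{ss}_T(\mathcal L_{\alpha_0})\neq \emptyset \iff w^{-1}(\alpha_0)<0$ and then invokes Theorem \ref{cor3}(2) and (3). Your additional remark about checking the $T$-linearization is a reasonable caution but not something the paper addresses explicitly.
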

\begin{proof}
 By \cite[Lemma 2.1]{KP}, we have $ X(w^{-1})^{ss}_T(\mathcal L_{\alpha_0})\neq \emptyset$ if and
 only if $w^{-1}(\alpha_0)<0$. 
Proof of the corollary follows from Theorem \ref{cor3} (2) and Theorem \ref{cor3} (3).
 \end{proof}

Remark: By Theorem \ref{cor3}, the automorphism group of the BSDH-variety 
$Z(w, \underline i)$ depends on the choice of the reduced expression $\underline i$ of $w$.

{\bf Example:} Let $G=PSL(4, \mathbb C)$. Consider the following different reduced expressions 
for $w_0$:
\begin{enumerate}
 \item $(w_0, \underline i_1)=s_1s_2s_1s_3s_2s_1$, $J(w_0, \underline i_1)= \{\alpha_1\}$.
 \item$(w_0, \underline i_2)=s_2s_1s_2s_3s_2s_1$, $J(w_0, \underline i_2)= \{\alpha_2\}$.
 \item$(w_0, \underline i_3)=s_3s_2s_3s_1s_2s_3$, $J(w_0, \underline i_3)= \{\alpha_3\}$.
\item$(w_0, \underline i_4)=s_1s_3s_2s_3s_1s_2$, $J(w_0, \underline i_4)= \{\alpha_1, \alpha_3\}$.
 \end{enumerate}

 By Theorem \ref{cor3},  we see that  
$Aut^0(Z(w_0, \underline i_1))$, $Aut^0(Z(w_0, \underline i_2))$, $Aut^0(Z(w_0, \underline i_3))$ 
and \\ $Aut^0(Z(w_0, \underline i_4))$ are 
isomorphic to $P_{\{\alpha_{1}\}}, P_{\{\alpha_{2}\}}, P_{\{\alpha_{3}\}},
 P_{\{\alpha_{1}, \alpha_{3}\}}$ respectively.

Therefore, we observe that $Aut^0(Z(w_0, \underline i_1))$ and $Aut^0(Z(w_0, \underline i_4))$ are 
 not isomorphic and hence we conclude that the BSDH-varieties 
 $Z(w_0, \underline i_1)$ and $Z(w_0, \underline i_4)$ are not isomorphic.
 Also, we observe that $Z(w_0, \underline i_1)$ and $Z(w_0, \underline i_2)$ are not 
 isomorphic as $P_{\{\alpha_1\}}$ and 
 $P_{\{\alpha_2\}}$ are not isomorphic.
 
 {\bf Remark:}
 Even if the automorphism groups of the BSDH-varieties are 
 isomorphic, it is not clear that the BSDH-varieties are isomorphic.

{\bf Acknowledgements.} We would like to thank Professor M. Brion for pointing 
out this problem and for the useful discussions that we had with him. We are very grateful to 
the referees for their valuable comments and suggestions which  helps to improve the paper.
The first  and second named authors would like to thank Infosys Foundation for partial support.

\end{document}